\begin{document}
\theoremstyle{plain}
\newtheorem{thm}{Theorem}[section]
\newtheorem{theorem}[thm]{Theorem}
\newtheorem{lemma}[thm]{Lemma}
\newtheorem{corollary}[thm]{Corollary}
\newtheorem{corollary*}[thm]{Corollary*}
\newtheorem{proposition}[thm]{Proposition}
\newtheorem{proposition*}[thm]{Proposition*}
\newtheorem{conjecture}[thm]{Conjecture}
%%%%%%%%%%%%%%%%%%%% Text roman %%%%%%%%%%%%%%%%%%%%%%%%%%%%%
\theoremstyle{definition}
\newtheorem{construction}{Construction}
\newtheorem{notations}[thm]{Notations}
\newtheorem{question}[thm]{Question}
\newtheorem{problem}[thm]{Problem}
\newtheorem{remark}[thm]{Remark}
\newtheorem{remarks}[thm]{Remarks}
\newtheorem{definition}[thm]{Definition}
\newtheorem{claim}[thm]{Claim}
\newtheorem{assumption}[thm]{Assumption}
\newtheorem{assumptions}[thm]{Assumptions}
\newtheorem{properties}[thm]{Properties}
\newtheorem{example}[thm]{Example}
\newtheorem{comments}[thm]{Comments}
\newtheorem{blank}[thm]{}
\newtheorem{observation}[thm]{Observation}
\newtheorem{defn-thm}[thm]{Definition-Theorem}

\newcommand{\sM}{{\mathcal M}}

%%%%%%%%%%%%%%%%%%%%%%%%%%%%%%%%%%%%%%%%%%%%%%%%%%%%%%%%%%%%%%

\title[The Laplace Transform of the Cut-and-Join Equation of
Mari\~no-Vafa Formula]{The Laplace Transform of the Cut-and-Join
Equation of Mari\~no-Vafa Formula and Its Applications}
       \author{Shengmao Zhu}
        \address{Center of Mathematical Sciences, Zhejiang University, Hangzhou, Zhejiang 310027, China}
        \email{zhushengmao@gmail.com}

        \begin{abstract}
        By the same method introduced in \cite{EMS}, we calculate the Laplace transform of the
        celebrated cut-and-join equation of Mari\~no-Vafa formula
        discovered by C. Liu, K. Liu and J. Zhou \cite{LLZ}. Then, we study the
        applications of the polynomial identity $(1)$ obtained in theorem 1.1 of this paper.
        We show the proof Bouchard-Mari\~no conjecture for $\mathbb{C}^3$
        which was given by L. Chen \cite{Ch2} firstly. Subsequently,
        we will present how to obtain series Hodge integral identities from
        this polynomial identity $(1)$. In particular, the main result in \cite{EMS} is one of special case
        in such series of Hodge integral identities.
        At last, we give a explicit formula for the computation
        of Hodge integral
        $\langle\tau_{b_{L}}\lambda_{g}\lambda_{1}\rangle_{g}$ where
        $b_{L}=(b_{1},...,b_{l})$.
        \end{abstract}
    \maketitle
\tableofcontents

\section{Introduction}
In a recent paper \cite{EMS}, Eynard, Mulase and Safnuk stated the
Laplace transform of the cut-and-join equation satisfied by the
partition function of Hurwitz numbers. They obtained a polynomial
identity of linear Hodge integrals. As an application, they proved
the Bouchard-Mari\~no conjecture on Hurwitz numbers. Then, Mulase
and Zhang \cite{MZ} stated that this polynomial identity can also be
used to derive the $DVV$ equation and $\lambda_{g}$-integral with
the same method introduced by \cite{GJV}.

In 2003, C. Liu, K. Liu and J. Zhou \cite{LLZ} proved the celebrated
Mari\~no-Vafa conjecture \cite{MV}. The main step in their proof is
to show the generating function of Hodge integral with triple
$\lambda$-classes $\mathcal{C}(\lambda,p;\tau)$ satisfies the
cut-and-join equation. Combining the cut-and-join restriction from
combinatorial side formula \cite{Zhou}, they finished the proof of
Mari\~no-Vafa formula. Moreover, the famous ELSV \cite{ELSV} formula
for Hurwitz numbers is a large framing limit of Mari\~no-Vafa
formula \cite{LLZ2}.

With the above motivations, we calculate the Laplace transform of
the cut-and-join equation for Mari\~no-Vafa's case in the first part
of this paper. The main result is
\begin{theorem}
For $g\geq 1$ and $l\geq 1$, we have the following equation:
\begin{align}
&-(\tau^2+\tau)^{l-2}\sum_{b_{L}\geq
0}\left((l-1)(2\tau+1)\langle\tau_{b_L}\Gamma_{g}(\tau)\rangle_{g}+
(\tau^{2}+\tau)\langle\tau_{b_{L}}\frac{d}{d\tau}\Gamma_{g}(\tau)\rangle_{g}\right)
\hat{\Psi}_{b_{L}}(t_{L};\tau)\\\nonumber
&-(\tau^{2}+\tau)^{l-1}\sum_{b_{L}\geq
0}\langle\tau_{b_{L}}\Gamma_{g}(\tau)\rangle_{g}\sum_{i=1}^{l}\left(\frac{\partial}{\partial
\tau}\hat{\Psi}_{b_{i}}(t_{i};\tau)+\frac{1}{t_{i}\tau+1}\hat{\Psi}_{b_{i}+1}(t_{i};\tau)\right)\hat{\Psi}_{b_{L\setminus
\{i\}}}(t_{L\setminus \{i\}};\tau)\\\nonumber
&=-\frac{(\tau^2+\tau)^{l-2}}{\tau+1}\sum_{1\leq i<j\leq
l}\sum_{\substack{a\geq 0\\b_{L\setminus\{i,j\} }\geq
0}}\langle\tau_{a}\tau_{b_{L\setminus\{i,j\}}}\Gamma_{g}(\tau)\rangle_{g}
\hat{\Psi}_{b_{L\setminus\{i,j\}
}}(t_{L\setminus\{i,j\}};\tau)\\\nonumber
&\cdot\frac{(t_{j}-1)(t_{i}^2\tau+t_{i})\hat{\Psi}_{a+1}(t_{i};\tau)-(t_{i}-1)(t_{j}^2\tau+t_{j})\hat{\Psi}_{a+1}(t_{j};\tau)}
{t_{i}-t_{j}}\\\nonumber
&+\frac{(\tau^2+\tau)^{l-1}}{2}\sum_{i=1}^{l}\sum_{\substack{a_{1}\geq
0\\a_{2}\geq 0\\b_{L\setminus\{i\}}\geq
0}}\left((\tau^2+\tau)\langle\tau_{a_{1}}\tau_{a_{2}}\tau_{b_{L\setminus\{i\}}}\Gamma_{g-1}(\tau)\rangle_{g-1}\right.\\\nonumber
&\left.-\sum_{\substack{g_{1}+g_{2}=g\\
\mathcal{I}\coprod\mathcal{J}=L\setminus\{i\}}}^{stable}\langle\tau_{a_{1}}\tau_{b_{\mathcal{I}}}\Gamma_{g_{1}}(\tau)
\rangle_{g_{1}}\langle\tau_{a_{2}}
\tau_{b_{\mathcal{J}}}\Gamma_{g_{2}}(\tau)\rangle_{g_{2}}\right)
\prod_{n=1}^{2}\hat{\Psi}_{a_{n}+1}(t_{i};\tau)\hat{\Psi}_{b_{L\setminus\{i\}}}(t_{L\setminus\{i\}};\tau)
\end{align}
Where $L=\{1,2,..,l\}$ is an index set, and for any subset $I\subset
L$, we denote
\begin{align*}
t_{I}=(t_{i})_{i\in I}, b_{I}=\{b_{i}|i\in I\},
\tau_{b_{I}}=\prod_{i\in I}\tau_{b_{i}},
\hat{\Psi}_{b_{I}}(t_{I},\tau)=\prod_{i\in
I}\hat{\Psi}_{b_{i}}(t_{i},\tau)
\end{align*}
and
$\Gamma_{g}(\tau)=\Lambda_{g}^{\vee}(1)\Lambda_{g}^{\vee}(-\tau-1)\Lambda_{g}^{\vee}(\tau)$,
$\hat{\Psi}_{n}(t;\tau)=\left(\frac{(t^2-t)(t\tau+1)}{\tau+1}\frac{d}{dt}\right)^n\left(\frac{t-1}{\tau+1}\right)$
for $n\geq 0$. The last summation in the formula is taken over all
partitions of g and disjoint subsets
$\mathcal{I}\coprod\mathcal{J}=L$ subject to the stability condition
$2g_{1}-1+|\mathcal{I}|>0$ and $2g_{2}-1+|\mathcal{J}|>0$.
\end{theorem}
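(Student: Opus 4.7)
The plan is to mimic the strategy of Eynard--Mulase--Safnuk (EMS) for the Hurwitz case, replacing their input (the Hurwitz cut-and-join equation and the ELSV formula) with the Mariño--Vafa cut-and-join equation of Liu--Liu--Zhou and the Mariño--Vafa formula itself. First I would start from the LLZ cut-and-join equation for $\mathcal{C}(\lambda,p;\tau)$ and substitute, in each coefficient $\mathcal{C}_\mu(\lambda;\tau)$ of the monomial $p_\mu$, the Mariño--Vafa expression as a triple Hodge integral of $\Gamma_g(\tau)/\prod_i(1-\mu_i\psi_i)$ multiplied by the combinatorial prefactor $\prod_i \tfrac{\prod_{a=1}^{\mu_i-1}(\mu_i\tau+a)}{(\mu_i-1)!}$. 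Expanding $1/(1-\mu_i\psi_i)$ as a geometric series produces the $\psi$-monomials $\tau_{b_L}$ and reduces the equation, for fixed genus $g$ and length $l$, to an identity among polynomials in $\mu_1,\dots,\mu_l$ whose coefficients are the integrals $\langle\tau_{b_L}\Gamma_g(\tau)\rangle_g$.

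The next step is the Laplace transform itself: for each part $\mu_i$ I apply the same weighted sum used in EMS (an exponential kernel in a conjugate variable, subsequently re-expressed in $t_i$). The crucial identification is that the Laplace transform of $\mu^{b+1}\prod_{a=1}^{\mu-1}(\mu\tau+a)/(\mu-1)!$ coincides exactly with $\hat{\Psi}_b(t;\tau)$: the extra factor $(\mu\tau+a)$ coming from the Mariño--Vafa prefactor is what deforms the EMS operator $(t^2-t)\tfrac{d}{dt}$ into $\tfrac{(t^2-t)(t\tau+1)}{\tau+1}\tfrac{d}{dt}$, while the base case $b=0$ yields $\tfrac{t-1}{\tau+1}$. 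I would prove this by induction on $b$, the inductive step being a direct differentiation check parallel to the $\hat{\xi}_n$ calculation of EMS.

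Armed with the $\hat{\Psi}_n$, I would transform each term of the LLZ cut-and-join equation separately. On the left, the $\tau$-derivative contributes twice: differentiating $\Gamma_g(\tau)$ inside the Hodge integral produces the $\langle\tau_{b_L}\tfrac{d}{d\tau}\Gamma_g(\tau)\rangle_g$ summand, while differentiating the $\tau$-dependent prefactor $(\tau^2+\tau)^{l-1}$ and the kernels $\hat{\Psi}_{b_i}(t_i;\tau)$ yields the $(l-1)(2\tau+1)$ term together with the $\tfrac{\partial}{\partial\tau}\hat{\Psi}_{b_i}+\tfrac{1}{t_i\tau+1}\hat{\Psi}_{b_i+1}$ combination. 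On the right, the cut piece of the cut-and-join operator gives the sum over pairs $i<j$, where after Laplace transform the combinatorial weight $1/(\mu_i+\mu_j)$ becomes, by a standard partial-fraction manipulation symmetric in $t_i,t_j$, the antisymmetric quotient $[(t_j-1)(t_i^2\tau+t_i)\hat{\Psi}_{a+1}(t_i;\tau)-(i\leftrightarrow j)]/(t_i-t_j)$. The join piece gives the single-index sum and splits, via the splitting formula for Hodge classes on the boundary of $\overline{\mathcal{M}}_{g,n}$, into the non-separating $\Gamma_{g-1}$ contribution and the separating stable sum over $g_1+g_2=g$ and $\mathcal{I}\coprod\mathcal{J}=L\setminus\{i\}$.

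The main obstacle I anticipate is the precise matching of combinatorial constants: carrying the $(\tau^2+\tau)^{l-1}$, $(\tau+1)^{-1}$, and $(2\tau+1)$ factors correctly through the transform, and verifying that the Mariño--Vafa prefactor combines with the Laplace kernel to produce exactly the operator $\tfrac{(t^2-t)(t\tau+1)}{\tau+1}\tfrac{d}{dt}$ rather than a nearby variant. Once this central identification is secured the remaining work is essentially bookkeeping along EMS lines, but the three-parameter class $\Gamma_g(\tau)$ and the intrinsic $\tau$-dependence of both the prefactor and the kernel make the coefficient comparison noticeably more delicate than in the original EMS Hurwitz case.
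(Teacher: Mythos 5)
Your overall strategy is the same as the paper's: Laplace-transform the Liu--Liu--Zhou cut-and-join equation term by term, identify the transform of $\frac{1}{\tau}\frac{\prod_{a=0}^{\mu-1}(\mu\tau+a)}{\mu!}\mu^{b}$ with the kernel $\hat{\Psi}_{b}(t;\tau)$ via the recursion $\hat{\Psi}_{b+1}=\frac{(t^2-t)(t\tau+1)}{\tau+1}\frac{d}{dt}\hat{\Psi}_{b}$, and match the stable cut and join contributions to the two sums on the right-hand side. However, there is a genuine gap: you never address the unstable contributions $(g,n)=(0,1)$ and $(0,2)$, and these are not bookkeeping --- they are what produce the precise shape of both sides of the identity. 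The cut term $\sum_{i}\sum_{\alpha+\beta=\mu_i}\alpha\beta\,\mathcal{C}_{g_1}(\nu_1)\mathcal{C}_{g_2}(\nu_2)$ contains pieces where one factor is $\mathcal{C}_{0,d}$ or $\mathcal{C}_{0,(\mu_j,\alpha)}$, which are not covered by the stable Hodge-integral expansion. In the paper, the $(0,1)$ piece (the term $U_1$) transforms, via the Lambert-curve identity $-\ln(1-y)=\frac{\partial y}{\partial\tau}\frac{1-(\tau+1)y}{y(1-y)}$, into exactly $\sum_i\frac{\partial y_i}{\partial\tau}t_i^2(\tau+1)\partial_{t_i}\hat{\mathcal{C}}_{g,l}$, which cancels part of the chain-rule term $\sum_i\frac{\partial t_i}{\partial\tau}\partial_{t_i}\hat{\mathcal{C}}_{g,l}$ coming from the fact that $t$ itself depends on $\tau$. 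Your accounting of the left-hand side (``differentiating the kernels $\hat{\Psi}_{b_i}(t_i;\tau)$ yields $\partial_\tau\hat{\Psi}_{b_i}+\frac{1}{t_i\tau+1}\hat{\Psi}_{b_i+1}$'') cannot be produced by differentiating $\hat{\Psi}_{b_i}$ at fixed $t_i$; the residual operator $\frac{t_i^2-t_i}{\tau+1}\partial_{t_i}$, which is what generates $\frac{1}{t_i\tau+1}\hat{\Psi}_{b_i+1}$, only survives after the $U_1$ cancellation.

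Similarly, you attribute the pair sum over $i<j$ entirely to ``the cut piece with weight $1/(\mu_i+\mu_j)$.'' In fact that weight comes from the unstable $(0,2)$ moduli $\int_{\overline{\mathcal{M}}_{0,2}}=\frac{1}{\mu_1+\mu_2}$ inside the term $U_2$ of the cut part, and the antisymmetric quotient $\frac{(t_j-1)(t_i^2\tau+t_i)\hat{\Psi}_{a+1}(t_i;\tau)-(i\leftrightarrow j)}{t_i-t_j}$ only emerges after combining $U_2$ (computed via the closed form $\hat{\mathcal{C}}_{0,2}=-\ln\frac{y_1-y_2}{x_1-x_2}-\tau(\ln(1-y_1)+\ln(1-y_2))$, Lemma 3.2 of the paper) with the genuine join term $\sum_{i<j}(\mu_i+\mu_j)\mathcal{C}_g(\mu(\hat{i},\hat{j}))$ (transformed via Lemma 3.3). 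Neither contribution alone gives a polynomial answer; it is their sum, equations (15)+(16)=(17) in the paper, that does. Without isolating and computing these unstable terms your derivation cannot close, and the stability restrictions $2g_1-1+|\mathcal{I}|>0$, $2g_2-1+|\mathcal{J}|>0$ in the final formula would be left unexplained.
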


We remark that theorem 1.1 is equivalent to the symmetrized
cut-and-joint equation of Mari\~no-Vafa formula obtained by L. Chen
\cite{Ch}. We will present this equivalence in appendix B.

In \cite{BM}, V. Bouchard and M. Mari\~no proposed a conjecture for
the calculation of topological string amplitudes of the toric three
fold $\mathbb{C}^{3}$ based on the recent work \cite{BKMP}.(We will
call this conjecture as Bouchard-Mari\~no conjecture for
$\mathbb{C}^{3}$. The Bourchard-Mari\~no conjecture for Hurwitz
number proved in \cite{EMS} is the large framing limit of it). We
calculate in section 4 that, the topological relation in
Bouchard-Mari\~no conjecture for $\mathbb{C}^{3}$ is equivalent to
the following identity of Hodge integral
\begin{align}
&(\tau^2+\tau)^{l-1}\sum_{b_L\geq
0}\langle\tau_{b_L}\Gamma_{g}(\tau)\rangle_{g}d\hat{\Psi}_{b_{L}}(t_L;\tau)\\\nonumber
&=-(\tau^2+\tau)^{l-2}\sum_{i=2}^{l}\sum_{a,b_{L\setminus
\{1,i\}}}\langle\tau_{a}\tau_{b_{L\setminus\{1,i\}}}\Gamma_{g}(\tau)
\rangle_{g}P_{a}(t_{1},t_{i};\tau)dt_{1}dt_{i}d\hat{\Psi}_{b_{L\setminus\{1,i\}}}(t_{L\setminus
\{1,i\}};\tau)\\\nonumber &+\sum_{\substack{a_1,a_2\geq 0
\\b_{L\setminus \{1\}}}}(\tau^2+\tau)^{l-1}\left((\tau^2+\tau)\langle\tau_{a_1}\tau_{a_2}\tau_{b_{L\setminus\{1\}}}\Gamma_{g-1}(\tau)\rangle_{g-1}
\right.\\\nonumber
&\left.-\sum_{\substack{g_1+g_2=g\\
\mathcal{I}\bigsqcup\mathcal{J}=L\setminus\{i\}}}^{stable}\langle
\tau_{a_1}\tau_{b_{I}}\Gamma_{g_1}(\tau)\rangle_{g_1,|I|+1}\langle
\tau_{a_2}\tau_{b_{J}}\Gamma_{g_2}(\tau)\rangle_{g_2,|J|+1}\right)
P_{a_1,a_2}(t_1;\tau)dt_1d\hat{\Psi}_{b_L\setminus\{1\}}(t_{L\setminus
\{1\}};\tau)
\end{align}

On the other hand, thanks to the Mari\~no-Vafa formula, the
topological amplitudes of $\mathbb{C}^3$ is completely determined by
the relation $(1)$ in theorem 1.1. In section 4, we will illustrate
that the identity $(2)$ is implied in $(1)$. Therefore, the
Bouchard-Mari\~no conjecture for $\mathbb{C}^3$ holds.

Furthermore, we use the result of theorem 1.1 to obtain some Hodge
integral identities. Indeed, $\hat{\Psi}_{b}(t,\tau)$ can be written
as
\begin{align*}
\hat{\Psi}_{b}(t;\tau)=\sum_{k=0}^{b}\frac{\tau^k}{(\tau+1)^{b+1}}\Psi_{b}^{k}(t)
\end{align*}
and $\Psi_{b}^{k}(t)$, $0\leq k\leq b$ could be computed from the
recursion relation they defined on.

At first, we consider the expansion of $\tau$ at $\infty$. Taking
the highest level of formula $(1)$, we get
\begin{corollary}
\begin{align*}
&\sum_{b_{L}\geq
0}\langle\tau_{b_{L}}\Lambda_{g}^{\vee}(1)\rangle\left((2g-2+l)\Psi_{b_{L}}^{b_{L}}(t_{L})+\sum_{i=1}^{l}(t_{i}^{2}-t_{i})
\frac{\partial}{\partial
t_{i}}\Psi_{b_{i}}^{b_{i}}(t_{i})\Psi_{b_{L\setminus\{i\}}}^{b_{L\setminus\{i\}}}(t_{L\setminus\{i\}})\right)\\
&=\sum_{1\leq i<j\leq l}\sum_{\substack{a\geq 0\\b_{L\setminus
\{i,j\}}\geq 0}}\langle\tau_{a}\tau_{b_{L\setminus
\{i,j\}}}\Lambda_{g}^{\vee}(1) \rangle_{g}\Psi_{b_{L\setminus
\{i,j\}}}^{b_{L\setminus
\{i,j\}}}\frac{(t_{j}-1)t_{i}^{2}\Psi_{a+1}^{a+1}(t_{i})-(t_{i}-1)t_{j}^{2}\Psi_{a+1}^{a+1}(t_{j})}{t_{i}-t_{j}}\\
&+\frac{1}{2}\sum_{i=1}^{l}\sum_{\substack{a_{1}\geq 0\\a_{2}\geq
0\\b_{L \setminus \{i\}}\geq
0}}\left(\langle\tau_{a_{1}}\tau_{a_{2}}\tau_{b_{L\setminus
\{i\}}}\Lambda_{g-1}^{\vee}(1)
\rangle_{g-1}+\sum_{\substack{g_{1}+g_{2}=g\\
\mathcal{I}\cup\mathcal{J}=L\setminus\{i\}}}^{stable}\langle\tau_{a_{1}}\tau_{b_{\mathcal{I}}}\Lambda_{g_{1}}^{\vee}(1)
\rangle_{g_{1}}\langle\tau_{a_{2}}\tau_{b_{\mathcal{J}}}\Lambda_{g_{2}}^{\vee}(1)
\rangle_{g_{2}} \right)\\
&\times
\Psi_{a_{1}+1}^{a_{1}+1}(t_{i})\Psi_{a_{2}+1}^{a_{2}+1}(t_{i})\Psi_{b_{L\setminus\{i\}
}}^{b_{L\setminus\{i\} }}(t_{L\setminus\{i\}})
\end{align*}
\end{corollary}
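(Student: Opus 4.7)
The plan is to extract the highest power of $\tau$ in the large-$\tau$ expansion of both sides of identity $(1)$ in Theorem~1.1. Since $\Lambda_g^\vee(\tau) = \tau^g+O(\tau^{g-1})$ and $\Lambda_g^\vee(-\tau-1) = (-1)^g\tau^g+O(\tau^{g-1})$, we obtain
\[ \Gamma_g(\tau) = (-1)^g\tau^{2g}\Lambda_g^\vee(1)+O(\tau^{2g-1}),\qquad \tfrac{d}{d\tau}\Gamma_g(\tau) = 2g(-1)^g\tau^{2g-1}\Lambda_g^\vee(1)+O(\tau^{2g-2}). \]
From the decomposition $\hat\Psi_b(t;\tau) = \sum_{k=0}^b\frac{\tau^k}{(\tau+1)^{b+1}}\Psi_b^k(t)$ the leading asymptotic is $\hat\Psi_b(t;\tau) = \tau^{-1}\Psi_b^b(t)+O(\tau^{-2})$, and the defining recursion gives $\Psi_b^b(t) = \bigl(t^2(t-1)\tfrac{d}{dt}\bigr)^b(t-1)$, so in particular $\tfrac{1}{t}\Psi_{b+1}^{b+1}(t) = (t^2-t)\tfrac{d}{dt}\Psi_b^b(t)$. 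This last identity is what lets the two pieces $\tfrac{\partial}{\partial\tau}\hat\Psi_{b_i}$ and $(t_i\tau+1)^{-1}\hat\Psi_{b_i+1}$ appearing on the LHS of $(1)$ merge into a single $\tfrac{d}{dt_i}\Psi_{b_i}^{b_i}$ contribution at leading order.

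Under these asymptotics every summand in $(1)$ turns out to be of common leading order $\tau^{l+2g-3}$. Factoring this out, on the LHS the first block contributes $-2(g+l-1)(-1)^g\Psi_{b_L}^{b_L}\langle\tau_{b_L}\Lambda_g^\vee(1)\rangle_g$ while the second block contributes $(-1)^g\bigl[l\Psi_{b_L}^{b_L}-\sum_i(t_i^2-t_i)\tfrac{d}{dt_i}\Psi_{b_i}^{b_i}\Psi_{b_{L\setminus\{i\}}}^{b_{L\setminus\{i\}}}\bigr]\langle\tau_{b_L}\Lambda_g^\vee(1)\rangle_g$. Their sum, via $2(g+l-1)-l = 2g-2+l$, collapses to $-(-1)^g\bigl[(2g-2+l)\Psi_{b_L}^{b_L}+\sum_i(t_i^2-t_i)\tfrac{d}{dt_i}\Psi_{b_i}^{b_i}\Psi_{b_{L\setminus\{i\}}}^{b_{L\setminus\{i\}}}\bigr]\langle\tau_{b_L}\Lambda_g^\vee(1)\rangle_g$, that is, $-(-1)^g$ times the LHS of the corollary.

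On the RHS, the first sum's numerator reduces at leading order to $(t_j-1)t_i^2\Psi_{a+1}^{a+1}(t_i)-(t_i-1)t_j^2\Psi_{a+1}^{a+1}(t_j)$, matching the first RHS summand of the corollary. For the second RHS sum the key point is that $(\tau^2+\tau)\Gamma_{g-1}(\tau)\sim (-1)^{g-1}\tau^{2g}\Lambda_{g-1}^\vee(1)$ carries sign $(-1)^{g-1}$ while $\Gamma_{g_1}(\tau)\Gamma_{g_2}(\tau)\sim (-1)^g\tau^{2g}\Lambda_{g_1}^\vee(1)\Lambda_{g_2}^\vee(1)$ carries sign $(-1)^g$, so the relative minus between them in $(1)$ becomes a relative plus once one factors out the common $-(-1)^g\tau^{l+2g-3}$. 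Dividing identity $(1)$ through by this common factor then yields Corollary~1.2 exactly.

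The main obstacle is pure bookkeeping rather than any new idea: tracking the various signs produced by $\Lambda_g^\vee(-\tau-1)$ and by differentiation in $\tau$, confirming the cancellation $2(l-1+g)-l = 2g-2+l$ that produces the Euler-characteristic coefficient on the LHS, and verifying the parity-induced sign flip that converts the minus of the stable-splitting term in $(1)$ to the plus appearing in the corollary. Beyond the asymptotic expansions noted above and the recursion defining $\Psi_b^b$, no additional input is required.
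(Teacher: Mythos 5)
Your proposal is correct and is essentially the paper's own proof: Section 5.2 likewise expands identity $(1)$ at $\tau=\infty$, extracts the top coefficient $[\tau^{2g+l-3}]$ of each of the four blocks using $a_{2g}(\lambda)=(-1)^{g}$ (equations $(68)$--$(71)$), and then divides by the common factor $(-1)^{g+1}$. Your sign bookkeeping --- the cancellation $2(g+l-1)-l=2g-2+l$ on the left-hand side and the parity flip that converts the minus in front of the stable-splitting term of $(1)$ into the plus of the corollary --- reproduces the paper's computation exactly.
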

We show in section 5 that $\Psi_{b}^{b}(t)$ is equal to
$\hat{\xi}_{b}(t)$ which is defined by
$\hat{\xi}_{b}(t)=((t^3-t^2)\frac{d}{dt})^{b}(t-1)$ in formula (1.2)
in \cite{EMS}. Hence, the main theorem 1.1 in \cite{EMS} is a
special case of formula $(1)$ in this paper. Taking the sub-highest
level of theorem 1.1, we also get another Hodge integral identity
corollary 5.2 in section 5. Unfortunately, this identity does not
contain any new information.

Next, we consider the expansion of $\tau$ at $\tau=0$. In this case,
the lowest level of formula $(1)$ is
\begin{corollary}
\begin{align*}
&\sum_{b_{L}\geq
0}\langle\tau_{b_{L}}\lambda_{g}\rangle_{g}\Psi_{b_{L}}^{0}(t_{L})=\frac{1}{l-1}\sum_{1\leq i<j \leq l}\sum_{\substack{a\geq 0\\
b_{L\setminus\{i,j\}}\geq 0}}\langle\tau_{a}\tau_{b_{L\setminus
\{i,j\}}}\lambda_{g}\rangle_{g}\Psi_{b_{L\setminus
\{i,j\}}}^{0}(t_{L\setminus\{i,j\}})\\\nonumber
&\cdot\frac{(t_{j}-1)t_{i}^{2}\Psi_{a+1}^{0}(t_{i})-(t_{i}-1)t_{j}^{2}\Psi_{a+1}^{0}(t_{j})}
{t_{i}-t_{j}}
\end{align*}
\end{corollary}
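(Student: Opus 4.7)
The plan is to deduce Corollary~1.4 by extracting the leading coefficient in the $\tau$-expansion of identity~(1) of Theorem~1.1 about $\tau=0$. This amounts to an order-by-order analysis in $\tau$, with no further geometric input needed beyond the definitions of the building blocks.

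First I collect the $\tau=0$ specialisations. From $\hat{\Psi}_b(t;\tau) = \sum_{k=0}^b \tau^k(\tau+1)^{-(b+1)}\Psi_b^k(t)$ one reads off $\hat{\Psi}_b(t;0) = \Psi_b^0(t)$, and hence $\hat{\Psi}_{b_L}(t_L;0) = \Psi_{b_L}^0(t_L)$. From $\Gamma_g(\tau) = \Lambda_g^\vee(1)\Lambda_g^\vee(-\tau-1)\Lambda_g^\vee(\tau)$, the value $\Lambda_g^\vee(0) = (-1)^g\lambda_g$ together with Mumford's relation $\Lambda_g^\vee(1)\Lambda_g^\vee(-1) = (-1)^g$ give $\Gamma_g(0) = \lambda_g$, whence $\langle\tau_{b_L}\Gamma_g(\tau)\rangle_g = \langle\tau_{b_L}\lambda_g\rangle_g + O(\tau)$.

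The main step is a term-by-term order count. Writing $(\tau^2+\tau)^m = \tau^m(\tau+1)^m$, every prefactor in~(1) has a transparent order of vanishing at $\tau = 0$. On the left-hand side the first summation is of order $\tau^{l-2}$ while the second carries $(\tau^2+\tau)^{l-1}$ and so is of order $\tau^{l-1}$ and drops out; inside the first summation the subterm carrying $(\tau^2+\tau)\frac{d}{d\tau}\Gamma_g(\tau)$ picks up an extra power of $\tau$ and is likewise negligible, so only the contribution $-(l-1)(2\tau+1)\langle\tau_{b_L}\Gamma_g(\tau)\rangle_g\hat{\Psi}_{b_L}(t_L;\tau)$ survives at order $\tau^{l-2}$. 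On the right-hand side the genus-preserving (fraction) sum is of order $\tau^{l-2}$ with the factor $1/(\tau+1)$ specialising to $1$, while the genus-splitting sum carries $(\tau^2+\tau)^{l-1}$ and again drops out. Substituting the specialisations $\hat{\Psi}_b(t;0) = \Psi_b^0(t)$ and $\Gamma_g(0) = \lambda_g$ into what remains, and finally dividing through by the overall factor $-(l-1)$, produces precisely the identity asserted in Corollary~1.4.

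The only delicate point, and hence the main obstacle, is the careful vanishing-order accounting for the various prefactors, combined with correctly specialising the displayed fraction from~(1) at $\tau = 0$; once this bookkeeping is in place the proof reduces to direct substitution.
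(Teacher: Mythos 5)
Your proposal is correct and follows essentially the same route as the paper's own derivation in Section 5.3: there the author likewise extracts the coefficient of $\tau^{l-2}$ in the expansion of identity $(1)$ at $\tau=0$, using $\hat{\Psi}_{b}(t;0)=\Psi_{b}^{0}(t)$ and $\Gamma_{g}(0)=\Lambda_{g}^{\vee}(1)\Lambda_{g}^{\vee}(-1)\Lambda_{g}^{\vee}(0)=\lambda_{g}$, and observes that the second left-hand sum, the $\frac{d}{d\tau}\Gamma_{g}(\tau)$ term, and both cut terms are all $O(\tau^{l-1})$, so only the $-(l-1)(2\tau+1)$ term and the join term survive at the lowest order. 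The one point worth flagging is that a literal specialisation of the factor $(t_{i}^{2}\tau+t_{i})$ at $\tau=0$ yields $t_{i}$ rather than the $t_{i}^{2}$ printed in the corollary (the $t_{i}^{2}$ piece carries an extra power of $\tau$ and belongs to the next order, as the paper's own expansion $(65)$ and its subsequent degree count for the $\lambda_{g}$-integral confirm), so your bookkeeping, carried out carefully, produces the corrected form of the statement with $t_{i}$ in place of $t_{i}^{2}$.
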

We can rederive the $\lambda_{g}$-integral \cite{FabP} from
Corollary 1.3.

We also pick up the sub-lowest level of formula $(1)$ and have
\begin{corollary}
\begin{align*}
&\sum_{b_{L}\geq 0}\langle\tau_{b_{L}}\lambda_{g}\rangle_{g}l\left(
-(|b_{L}|+1)\Psi_{b_{L}}^{0}(t_{L})+\sum_{j=1}\Psi_{b_{j}}^{1}(t_{j})\Psi_{b_{L\setminus\{j\}}}^{0}(t_{L\setminus\{j\}})\right)\\\nonumber
&+\sum_{b_{L}\geq
0}\langle\tau_{b_{L}}\lambda_{g}\rangle_{g}\sum_{i=1}^{l}(t_{i}^{2}-t_{i})\frac{\partial}{\partial
t_{i}}\Psi_{b_{i}}^{0}(t_{i})\Psi_{b_{L\setminus\{i\}}}^{0}(t_{L\setminus\{i\}})+\sum_{b_{L}\geq
0}\langle\tau_{b_{L}}\sum_{d=g-1}^{3g-3}P_{d}(\lambda)\rangle_{g}l\Psi_{b_{L}}^{0}(t_{L})\\\nonumber
&=\sum_{1\leq i<j \leq l}\sum_{\substack{a\geq 0\\
b_{L\setminus\{i,j\}}\geq 0}}\langle\tau_{a}\tau_{b_{L\setminus
\{i,j\}}}\lambda_{g}\rangle_{g}\Psi_{b_{L\setminus
\{i,j\}}}^{0}(t_{L\setminus\{i,j\}})[\frac{(t_{j}-1)t_{i}\left(\Psi_{a+1}^{1}(t_{i})+(t_{i}-|b_{L\setminus\{i,j\}}|-a-3)
\Psi_{a+1}^{0}(t_{i})\right)} {t_{i}-t_{j}}\\\nonumber & -\frac{
(t_{i}-1)t_{j}\left(\Psi_{a+1}^{1}(t_{j})+(t_{j}-|b_{L\setminus\{i,j\}}|-a-3)\Psi_{a+1}^{0}(t_{j})\right)}
{t_{i}-t_{j}}]\\\nonumber &+\sum_{1\leq i<j \leq l}\sum_{\substack{a\geq 0\\
b_{L\setminus\{i,j\}}\geq 0}}\langle\tau_{a}\tau_{b_{L\setminus
\{i,j\}}}\lambda_{g}\rangle_{g}\sum_{r\neq
i,j}\Psi_{b_{r}}^{1}(t_{r})\Psi_{b_{L\setminus
\{i,j\}}}^{0}(t_{L\setminus\{i,j\}})\frac{(t_{j}-1)t_{i}\Psi_{a+1}^{0}(t_{i})-(t_{i}-1)t_{j}\Psi_{a+1}^{0}(t_{j})}
{t_{i}-t_{j}}\\\nonumber &+
\sum_{1\leq i<j \leq l}\sum_{\substack{a\geq 0\\
b_{L\setminus\{i,j\}}\geq 0}}\langle\tau_{a}\tau_{b_{L\setminus
\{i,j\}}}\sum_{d=g-1}^{3g-3}P_{d}(\lambda)\rangle_{g}\Psi_{b_{L\setminus\{i,j\}}}^{0}(t_{L\setminus\{i,j\}})
\frac{(t_{j}-1)t_{i}\Psi_{a+1}^{0}(t_{i})-(t_{i}-1)t_{j}\Psi_{a+1}^{0}(t_{j})}{t_{i}-t_{j}}\\\nonumber
&+\frac{1}{2}\sum_{i=1}^{l}\sum_{\substack{a_{1}\geq 0\\a_{2}\geq
0\\b_{L\setminus\{i\}}\geq
0}}\sum_{\substack{g_{1}+g_{2}=g\\|\mathcal{I}|\cup|\mathcal{J}|=L\setminus\{i\}}}^{stable}\langle\tau_{a_{1}}\tau_{b_{\mathcal{I}}}
\lambda_{g_{1}}\rangle_{g_{1}}\langle\tau_{a_{2}}\tau_{b_{\mathcal{J}}}
\lambda_{g_{2}}\rangle_{g_{2}}\Psi_{a_{1}+1}^{0}(t_{i})\Psi_{a_{2}+1}^{0}(t_{i})\Psi_{b_{L\setminus\{i\}}}^{0}(t_{L\setminus\{i\}})
\end{align*}
where
$\sum_{d=g-1}^{3g-3}P_{d}(\lambda)=\Lambda_{g}^{\vee}(1)a_{1}(\lambda)$
and
$a_{1}(\lambda)=\sum_{m=1}^{g}m\lambda_{g-m}\lambda_{g}-(-1)^{g}\Lambda_{g}^{\vee}(-1)\lambda_{g-1}$.
\end{corollary}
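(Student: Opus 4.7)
The plan is to extract the next-to-lowest coefficient in the $\tau$-expansion of the master identity~(1) of Theorem~1.1, in direct analogy with how Corollary~1.3 arises from the lowest coefficient. The key Taylor expansions around $\tau=0$ are
\begin{align*}
\hat{\Psi}_b(t;\tau) &= \Psi_b^0(t) + \tau\bigl(\Psi_b^1(t)-(b+1)\Psi_b^0(t)\bigr) + O(\tau^2),\\
\Gamma_g(\tau) &= \lambda_g + \tau\,\Lambda_g^\vee(1)\,a_1(\lambda) + O(\tau^2),\\
(\tau^2+\tau)^{l-k} &= \tau^{l-k}\bigl(1+(l-k)\tau+O(\tau^2)\bigr),
\end{align*}
together with $\tfrac{1}{t\tau+1}=1-t\tau+O(\tau^2)$ and $\tfrac{t^2\tau+t}{\tau+1}=t-(t-t^2)\tau+O(\tau^2)$. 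The constant $\Gamma_g(0)=\lambda_g$ comes from Mumford's relation $\Lambda_g^\vee(1)\Lambda_g^\vee(-1)=(-1)^g$ combined with $\Lambda_g^\vee(0)=(-1)^g\lambda_g$, while the linear coefficient $\Lambda_g^\vee(1)a_1(\lambda)$ is precisely the stated $\sum_{d=g-1}^{3g-3}P_d(\lambda)$ insertion. The recursion for $\hat{\Psi}_n$ also gives $\hat{\Psi}_{b+1}(t;0)=(t^2-t)\partial_t\Psi_b^0(t)$, which will produce the derivative term on the LHS.

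At the lowest power $\tau^{l-2}$ only Line~1 of each side of~(1) contributes, and the resulting balance is Corollary~1.3 after removing the common factor $(l-1)$. At the sub-lowest power $\tau^{l-1}$ every term of~(1) participates. On the LHS, substituting the expansions and using $(2\tau+1)(1+\tau)^{l-2}=1+l\tau+O(\tau^2)$ pulls out the factor $l$ in front of both the $\sum_d P_d(\lambda)$ insertion and the combined piece $-(|b_L|+1)\Psi_{b_L}^0+\sum_j\Psi_{b_j}^1\Psi_{b_{L\setminus\{j\}}}^0$, while the evaluation $\tfrac{1}{t_i\tau+1}\hat{\Psi}_{b_i+1}(t_i;\tau)\big|_{\tau=0}$ together with the recursion contributes the derivative term $(t_i^2-t_i)\partial_{t_i}\Psi_{b_i}^0$ with multiplier~$1$. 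On Line~1 of the RHS of~(1), three sources of first-order corrections---the expansion of the prefactor $(\tau+1)^{l-3}$, the expansion of each remaining $\hat{\Psi}_{b_r}$ factor (contributing $-(b_r+1)$ each), and the expansion of $(t_i^2\tau+t_i)\hat{\Psi}_{a+1}(t_i;\tau)$ (contributing $t_i-a-2$)---combine into the sharp constant $t_i-|b_{L\setminus\{i,j\}}|-a-3$ appearing inside the symmetrised quotient; the $\Psi_{b_r}^1$ corrections come from the same $\hat{\Psi}$ expansion, and the $P_d(\lambda)$-quotient term comes from the linear part of $\Gamma_g$. On the last line of~(1), the $\Gamma_{g-1}$ splitting term carries an extra $\tau^2+\tau$ factor and drops out of sub-lowest order; only the stable splitting with $\lambda_{g_1}\lambda_{g_2}$ survives, yielding the last line of Corollary~1.4.

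The main obstacle is combinatorial bookkeeping: one must simultaneously track first-order corrections from the prefactors $(\tau^2+\tau)^{l-k}$ and $(2\tau+1)$, the denominators $(\tau+1)^{b+1}$ and $(t\tau+1)$, the $\tau$-linear pieces of $\Gamma_g$ and of each $\hat{\Psi}$ factor, and the $\partial_\tau\hat{\Psi}$ derivative in Line~2 of~(1). The numerical signatures---the factor $l$ on the LHS and the constant $|b_{L\setminus\{i,j\}}|+a+3$ on the RHS, arising from the cancellation $l-3-(|b_{L\setminus\{i,j\}}|+l-2)+(t_i-a-2)=t_i-|b_{L\setminus\{i,j\}}|-a-3$---act as tight consistency checks on the arithmetic. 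Once this bookkeeping is carried out, collecting all contributions yields exactly the identity of Corollary~1.4.
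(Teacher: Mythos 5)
Your proposal is correct and follows essentially the same route as the paper: Section~5.3 derives this corollary precisely by substituting the expansions $\hat{\Psi}_b=\sum_k\tau^k(\tau+1)^{-(b+1)}\Psi_b^k$ and $\Gamma_g(\tau)=\sum_m\Lambda_g^\vee(1)a_m(\lambda)\tau^m$ into Theorem~1.1 and extracting the coefficient of $\tau^{l-1}$, with $[\tau^{l-1}]T_2=0$ because of the extra factor $\tau^2+\tau$ and with $\Lambda_g^\vee(1)a_1(\lambda)=\sum_{d}P_d(\lambda)$ supplying the extra insertions. Your bookkeeping of the first-order corrections (in particular the identity $l-3-(|b_{L\setminus\{i,j\}}|+l-2)-(a+2)=-(|b_{L\setminus\{i,j\}}|+a+3)$ and the factor $l$ on the left-hand side) matches the paper's equations (81)--(84).
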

The above identity contains Hodge integral of type
$\langle\tau_{b_{L}}\sum_{d=g-1}^{3g-3}P_{d}(\lambda)\rangle_{g}$.
By some direct calculations,
\begin{align*}
P_{g-1}(\lambda)=\lambda_{g-1}, P_{g}(\lambda)=g\lambda_{g},
P_{g+1}(\lambda)=-\lambda_{g}\lambda_{1},\cdots,
P_{3g-3}(\lambda)=(-1)^{g+1}\lambda_{g}\lambda_{g-1}\lambda_{g-2}
\end{align*}
As an application of corollary 1.4, we get the following Hodge
integral recursion.
\begin{theorem}
If $\sum_{i=1}^{l}b_{i}=2g-4+l$, there exists a constant
$C(g,l,b_{1},..,b_{l})$ related to $g,l,b_{1},..,b_{l}$, such that
\begin{align*}
\langle\tau_{b_{L}}\lambda_{g}\lambda_{1}\rangle_{g}=\frac{1}{l}\sum_{1\leq
i<j \leq l}\langle\tau_{b_{i}+b_{j}-1}\tau_{b_{L\setminus
\{i,j\}}}\lambda_{g}\lambda_{1}\rangle_{g}\frac{(b_{i}+b_{j})!}{b_{i}!b_{j}!}+C(g,l,b_{1},..,b_{l})
\end{align*}
where $C(g,l,b_{1},..,b_{l})$ is a very verbose combinatoric
constant which is given at Appendix B.
\end{theorem}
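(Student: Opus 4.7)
\emph{Proof plan.} The plan is to derive Theorem 1.5 from Corollary 1.4 by reading off the coefficient of the monomial $\prod_{i=1}^{l}t_i^{b_i+1}$ on both sides. Two degree observations are crucial. First, by induction on the defining recursion $\hat{\Psi}_n(t;\tau)=\left(\frac{(t^2-t)(t\tau+1)}{\tau+1}\frac{d}{dt}\right)^n\left(\frac{t-1}{\tau+1}\right)$, the polynomial $\Psi_b^0(t)$ has degree $b+1$ with leading coefficient $b!$, and more generally $\Psi_b^k(t)$ has degree $b+k+1$. Second, under the dimension hypothesis $\sum b_i=2g-4+l$, the class $\lambda_g\lambda_1=-P_{g+1}(\lambda)$ is the only summand of $\sum_{d=g-1}^{3g-3}P_d(\lambda)$ that pairs nontrivially with $\tau_{b_L}$, since $\dim\overline{\mathcal{M}}_{g,l}-|b_L|=g+1$ matches exactly $P_{g+1}$; simultaneously $\langle\tau_{b_L}\lambda_g\rangle_g=0$ because its integrand has degree strictly less than the dimension.

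Next I would isolate the pure $\lambda_g\lambda_1$ contributions on each side. On the LHS, the only summand of Corollary 1.4 contributing to the monomial $\prod t_i^{b_i+1}$ through $\lambda_g\lambda_1$ is $\sum_{b_L}\langle\tau_{b_L}\sum_{d}P_d(\lambda)\rangle_g\cdot l\cdot\Psi_{b_L}^0(t_L)$, producing $-l\bigl(\prod_i b_i!\bigr)\langle\tau_{b_L}\lambda_g\lambda_1\rangle_g$. On the RHS, the kernel $\bigl((t_j-1)t_i\Psi_{a+1}^0(t_i)-(t_i-1)t_j\Psi_{a+1}^0(t_j)\bigr)/(t_i-t_j)$ has $(t_i,t_j)$-degree $a+3$, so its coefficient at $t_i^{b_i+1}t_j^{b_j+1}$ is nonzero only when $a\geq b_i+b_j-1$. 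Combined with the dimension constraint $a+|b_{L\setminus\{i,j\}}'|=2g-5+l$ for the $\lambda_g\lambda_1$ integral and the requirement $b_r'\geq b_r$ for $\Psi_{b_{L\setminus\{i,j\}}'}^0$ to contribute to $\prod_{r\neq i,j}t_r^{b_r+1}$, the only surviving term is $(a,b_{L\setminus\{i,j\}}')=(b_i+b_j-1,b_{L\setminus\{i,j\}})$. A short expansion of $(t_j-1)t_i\Psi_{b_i+b_j}^0(t_i)-(t_i-1)t_j\Psi_{b_i+b_j}^0(t_j)$ using the leading coefficient $(b_i+b_j)!$ of $\Psi_{b_i+b_j}^0$ then yields kernel coefficient $(b_i+b_j)!$ at the target monomial; combined with the factor $\prod_{r\neq i,j}b_r!$ from $\Psi_{b_{L\setminus\{i,j\}}}^0$ and dividing through by $-l\prod_i b_i!$, this produces exactly the recursion
\[
\tfrac{1}{l}\sum_{i<j}\tfrac{(b_i+b_j)!}{b_i!\,b_j!}\,\langle\tau_{b_i+b_j-1}\tau_{b_{L\setminus\{i,j\}}}\lambda_g\lambda_1\rangle_g
\]
stated in the theorem.

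All remaining contributions to the extracted coefficient---subleading expansions of $\Psi^0$ and $\Psi^1$ paired with $\langle\tau_{b_L'}\lambda_g\rangle_g$ at auxiliary multi-indices on both sides, contributions from $P_d(\lambda)$ with $d\neq g+1$ on the RHS (which yield Hodge integrals involving $\lambda_g$, $\lambda_{g-1}$, $\lambda_g\lambda_{g-1}$, etc., but never $\lambda_g\lambda_1$), and the genus-splitting cut sum $\sum_{g_1+g_2=g}\langle\tau\lambda_{g_1}\rangle\langle\tau\lambda_{g_2}\rangle$---depend only on $(g,l,b_1,\ldots,b_l)$ and on Hodge integrals other than $\langle\tau_{b_L}\lambda_g\lambda_1\rangle_g$, so they are absorbed into the constant $C(g,l,b_1,\ldots,b_l)$. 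The main technical obstacle is purely bookkeeping: each subleading term requires expanding $\Psi_b^k(t)$ through its defining recursion and tracking a large number of cross terms. This yields the explicit ``verbose'' formula for $C$ that the theorem defers to Appendix B; the structural assertion that such $C$ exists and that the recursion coefficient is exactly $\binom{b_i+b_j}{b_i}$ follows directly from the coefficient comparison sketched above.
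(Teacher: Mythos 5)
Your proposal is correct and follows essentially the same route as the paper: the paper likewise extracts the total-degree-$(2g+2l-4)$ part of Corollary 1.4 and then the coefficient of $t_L^{b_L+1}$, uses $f^0(b,b+1)=b!$ to identify the surviving $\lambda_g\lambda_1$ terms (only the LHS term $l\Psi^0_{b_L}$ and the $P_{g+1}$ piece of the join kernel at $a=b_i+b_j-1$), and dumps all other contributions into the constant $C$. Your dimension-counting argument pinning down the unique surviving index $(a,b')=(b_i+b_j-1,b_{L\setminus\{i,j\}})$ matches the paper's Appendix B bookkeeping, just stated more conceptually.
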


The initial value
$\langle\tau_{2g-3}\lambda_{g}\lambda_{1}\rangle_{g}=\frac{1}{12}[g(2g-3)b_{g}+b_{1}b_{g-1}]$
has been computed by Y. Li \cite{Li}. Thus, by the recursion formula
in theorem 1.5, we can compute out all the Hodge integral of type $
\langle\tau_{b_{L}}\lambda_{g}\lambda_{1}\rangle_{g} $. In fact, the
Hodge integrals $ \langle\tau_{b_{L}}P_{d}(\lambda)\rangle_{g}$
appeared in Corollary 1.4 could also be calculated via the same
method. But the computation will be more complicated.
\\

{\bf Acknowledgements.} The author would like to thank his advisor
Kefeng Liu's encouragement, valuable discussions, and bringing the
paper \cite{EMS} to his attention.

\vskip 30pt
\section{The Laplace Transform of  Mari\~no-Vafa Formula}
At first, we introduce some notations followed by \cite{LLZ}. Let
\begin{align*}
\Lambda_{g}^{\vee}(t)=t^{g}-\lambda_{1}t^{g-1}+\cdots+(-1)^{g}\lambda_{g}
\end{align*}
be the Chern polynomial of $\mathbb{E}^{\vee}$, the dual of the
Hodge bundle. For a partition $\mu$ given by
\begin{align*}
\mu_{1}\geq \mu_{2}\geq \cdots \geq \mu_{l(\mu)}>0,
\end{align*}
let $|\mu|=\sum_{i=1}^{l(\mu)}\mu_{i}$ and $\Gamma_{g}(\tau)=
\Lambda_{g}^{\vee}(1)\Lambda_{g}^{\vee}(-\tau-1)\Lambda_{g}^{\vee}(\tau)$,
define
\begin{align*}
\mathcal{C}_{g,\mu}(\tau)=-\frac{\sqrt{-1}^{|\mu|+l(\mu)}}{|Aut(\mu)|}[\tau(\tau+1)]^{l(\mu)-1}\prod_{i=1}^{l(\mu)}
\frac{\prod_{a=1}^{\mu_{i}-1}(\mu_{i}\tau+a)}{(\mu_{i}-1)!}\int_{\overline{\mathcal{M}}_{g,l(\mu)}}
\frac{\Gamma_{g}(\tau)}{\prod_{i=1}^{l(\mu)}(1-\mu_{i}\psi_{i})}
\end{align*}

The Deligne-Mumford stack $\overline{\mathcal{M}}_{g,l}$ is defined
as the moduli space of stable curves satisfying the stability
condition $2g-2+l>0$. For the unstable cases $(g,l)=(0,1)$ and
$(0,2)$, we define
\begin{align*}
\int_{\overline{\mathcal{M}}_{0,1}}\frac{1}{1-\mu\psi}=\frac{1}{\mu^2}
\end{align*}
\begin{align*}
\int_{\overline{\mathcal{M}}_{0,2}}\frac{1}{(1-\mu_{1}\psi_{1})(1-\mu_{2}\psi_{2})}=\frac{1}{\mu_{1}+\mu_{2}}
\end{align*}

Thus
\begin{align*}
\mathcal{C}_{0,d}(\tau)=-\sqrt{-1}^{d+1}\frac{1}{\tau}\frac{\prod_{a=0}^{d-1}(d\tau+a)}{d!}d^{-2}
\end{align*}
\begin{align*}
\mathcal{C}_{0,(\mu_{1},\mu_{2})}(\tau)=\frac{\sqrt{-1}^{\mu_{1}+\mu_{2}}}{|Aut(\mu_{1},\mu_{2})|}
\frac{\tau+1}{\tau}\prod_{i=1}^{2}\frac{\prod_{a=0}^{\mu_{i}-1}(\mu_{i}\tau+a)}{\mu_{i}!}\frac{1}{\mu_{1}+\mu_{2}}
\end{align*}

The Mari\~no-Vafa formula proved in \cite{LLZ} gives a direct
combinatorial formula associated to representation of symmetric
groups for the generation function of $\mathcal{C}_{g,\mu}(\tau)$.
But we don't go further to this formula here.

Through a direct calculation, we have
\begin{align}
\mathcal{C}_{g,\mu}(\tau)=-\frac{\sqrt{-1}^{|\mu|+l(\mu)}}{|Aut(\mu)|}[\tau(\tau+1)]^{l(\mu)-1}\sum_{\substack{b_{i}\geq
0\\i=1,..,l(\mu)}}\langle\prod_{i=1}^{l(\mu)}\tau_{b_{i}}\Gamma_{g}(\tau)
\rangle_{g}\prod_{i=1}^{l(\mu)}\frac{1}{\tau}\frac{\prod_{a=0}^{\mu_{i}-1}(\mu_{i}\tau+a)}{\mu_{i}!}\mu_{i}^{b_{i}}
\end{align}

Let $\mathcal{C}_{g}(\mu;\tau)=|Aut(\mu)|\mathcal{C}_{g,\mu}(\tau)$,
the Laplace transform of $\mathcal{C}_{g,\mu}(\tau)$ is defined by,
\begin{align*}
\mathcal{C}_{g,l}(w_{1},..,w_{l})=\sum_{\mu\in
 \mathbb{N}^{l}}\frac{1}{\sqrt{-1}^{l+|\mu|}}\mathcal{C}_{g}(\mu;\tau)e^{-(\mu_{1}w_{1}+\cdots+\mu_{l}w_{l})}
\end{align*}
where we have let $l=l(\mu)$.

In order to simplify the above expression of laplace transform, we
introduce some new variables. Consider the framed Lambert curve
$x=y(1-y)^\tau$, and the coordinate change $x=e^{-w}$. By Lagrange
inversion theorem, L. Chen \cite{Ch} got,
\begin{align*}
y=\sum_{k\geq1}\frac{\prod_{a=0}^{k-2}(k\tau+a)}{k!}x^{k}
\end{align*}
Now we introduce the variable $t$ by,
\begin{align*}
t=1+\frac{1+\tau}{\tau}\sum_{k\geq
1}\frac{\prod_{a=0}^{k-1}(k\tau+a)}{k!}e^{-kw}
\end{align*}
and
\begin{align*}
\hat{\Psi}_{n}(t;\tau)=\frac{1}{\tau}\sum_{k\geq1}\frac{\prod_{a=0}^{k-1}(k\tau+a)}{k!}k^{n}e^{-kw}
\end{align*}
which is a polynomial with top degree $2n+1$ in variable $t$ via the
recursion relation
\begin{align*}
\hat{\Psi}_{n+1}(t;\tau)=(t^2-t)\frac{t\tau+1}{\tau+1}\frac{d}{dt}\hat{\Psi}_{n}(t;\tau)
\end{align*}
for $n\geq 0$ and $\hat{\Psi}_{0}(t;\tau)=\frac{t-1}{\tau+1}$.

Now, the laplace transform of $\mathcal{C}_{g,\mu}(\tau)$ can be
written in $t$ variable,
\begin{align}
\hat{\mathcal{C}}_{g,l}(t_{1},..,t_{l})&=\sum_{\mu\in
\mathbb{N}^{l}}\frac{1}{\sqrt{-1}^{l+|\mu|}}\mathcal{C}_{g}(\mu;\tau)e^{-(\mu_{1}w_{1}+\cdots+\mu_{l}w_{l})}\\\nonumber
&=-(\tau(\tau+1))^{l-1}\sum_{\substack{b_{i}\geq
0\\i=1,..,l}}\langle\prod_{i=1}^{l}\tau_{b_{i}}\Gamma_{g}(\tau)\rangle_{g}
\prod_{i=1}^{l}\hat{\Psi}_{b_{i}}(t_{i};\tau)
\end{align}

\section{The Laplace Transform of The Cut-and-join Equation of Mari\~no-Vafa Formula}

Let us introduce formal variables $p=(p_{1},..,p_{n},..)$, and
define
$$p_{\mu}=p_{\mu_{1}}\cdots p_{l(\mu)}$$
for a partition $\mu$. Define generating functions
\begin{align*}
\mathcal{C}_{g,l}(\lambda,p;\tau)=\sum_{\mu,l(\mu)=l}\frac{\mathcal{C}_{g}(\mu;\tau)}{|Aut(\mu)|}p_{\mu}\lambda^{2g-2+l}
\end{align*}
\begin{align*}
\mathcal{C}(\lambda,p;\tau)=\sum_{\substack{g\geq 0\\l\geq
1}}\mathcal{C}_{g,l}(\lambda,p;\tau)
\end{align*}

In 2003, C. Liu, K. Liu and J. Zhou \cite{LLZ} proved that
$\mathcal{C}(\lambda,p;\tau)$ satisfies the cut-and-join equation
\begin{align*}
\frac{\partial\mathcal{C}}{\partial
\tau}=\frac{\sqrt{-1}}{2}\lambda\sum_{i,j\geq
1}((i+j)p_{i}p_{j}\frac{\partial\mathcal{C}}{\partial
p_{i+j}}+ijp_{i+j}\frac{\partial^2\mathcal{C}}{\partial
p_{i}\partial p_{j}}+ijp_{i+j}\frac{\partial\mathcal{C}}{\partial
p_{i}}\frac{\partial\mathcal{C}}{\partial p_{j}})
\end{align*}

For every choice of $g\geq 1$ and a partition $\mu$, the coefficient
of $p_{\mu}\lambda^{2g-2+l(\mu)}$ is

\begin{align}
\frac{\partial}{\partial
\tau}\left(\frac{\mathcal{C}_{g}(\mu;\tau)}{|Aut(\mu)|}\right)&=\sqrt{-1}\sum_{i<j}(\mu_{i}+\mu_{j})\frac{\mathcal{C}_{g}(\mu;\tau)}
{|Aut(\mu(\hat{i},\hat{j}))|}\\\nonumber
&+\frac{\sqrt{-1}}{2}\sum_{i=1}^{l}\sum_{\alpha+\beta=\mu_{i}}\alpha\beta
\left(\frac{\mathcal{C}_{g-1}(\mu(\alpha,\hat{i};\tau))}{|Aut(\mu(\alpha,\hat{i}))|}+
\sum_{\substack{g_{1}+g_{2}=g\\\nu_{1}\coprod\nu_{2}=\mu(\alpha,\hat{i})}}\frac{\mathcal{C}_{g_{1}}(\nu_{1};\tau)}{|Aut(\nu_{1}|)}
\frac{\mathcal{C}_{g_{2}}(\nu_{2};\tau)}{|Aut(\nu_{2})|}\right)
\end{align}

Let us first calculate the Laplace transform of the cut-and-join
equation for the $l=1$ case which includes Proposition 3.3 in
\cite{EMS} as its special case.
\begin{proposition}
For $g\geq 1$, the Laplace transform of the cut-and-join equation
for the $l=1$ case is:
\begin{align*}
&-\sum_{b\geq 0}\langle\tau_{b}
\frac{d}{d\tau}\Gamma_{g}(\tau)\rangle_{g}\hat{\Psi}_{b}(t;\tau)-\sum_{b\geq
0}\langle\tau_{b}
\Gamma_{g}(\tau)\rangle_{g}\left(\frac{d}{d\tau}\hat{\Psi}_{b}(t;\tau)+
\frac{1}{t\tau+1}\hat{\Psi}_{b+1}(t;\tau)\right)\\
&=\frac{1}{2}\sum_{a_{1},a_{2}\geq
0}\left((\tau(\tau+1))\langle\tau_{a_{1}}\tau_{a_{2}}
\Gamma_{g-1}(\tau)\rangle_{g-1}-\sum_{\substack{g_{1}+g_{2}=g\\g_{1}>0\\g_{2}>0}}\langle\tau_{a_{1}}
\Gamma_{g_{1}}(\tau)\rangle_{g_{1}}\langle\tau_{a_{2}}
\Gamma_{g_{2}}(\tau)\rangle_{g_{2}}\right)\hat{\Psi}_{a_{1}+1}(t;\tau)
\hat{\Psi}_{a_{2}+1}(t;\tau)
\end{align*}
\end{proposition}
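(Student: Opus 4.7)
The strategy is to specialize the cut-and-join equation (4) to the $l(\mu)=1$ case by taking $\mu = (d)$ for $d \geq 1$, then apply the Laplace transform operator $\mathcal{L}\colon f(d) \mapsto \sum_{d\geq 1} f(d) e^{-dw}/\sqrt{-1}^{1+d}$ to both sides and re-express everything in the variable $t$ of the framed Lambert curve. Since $l(\mu)=1$, the ``join'' sum $\sum_{i<j}$ in (4) is empty, so only the two ``cut'' terms on the right-hand side contribute.

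On the left, $\mathcal{L}[\partial_\tau \mathcal{C}_g((d);\tau)] = \partial_\tau|_w \hat{\mathcal{C}}_{g,1}(t)$, which by formula (5) equals $-\sum_b \langle\tau_b\,\partial_\tau\Gamma_g\rangle_g \hat{\Psi}_b - \sum_b \langle\tau_b\Gamma_g\rangle_g \,\partial_\tau|_w \hat{\Psi}_b$. On the right, the splitting $\sqrt{-1}^{1+d} = \sqrt{-1}\cdot\sqrt{-1}^{\alpha}\cdot\sqrt{-1}^{\beta}$ lets the Laplace transform decouple the cut sum via
\[
\mathcal{L}\Big[\sum_{\alpha+\beta=d}\alpha\beta\,f(\alpha)g(\beta)\Big] = \sqrt{-1}\,\Big(\sum_{\alpha\geq 1} \tfrac{\alpha e^{-\alpha w} f(\alpha)}{\sqrt{-1}^{1+\alpha}}\Big)\Big(\sum_{\beta\geq 1} \tfrac{\beta e^{-\beta w} g(\beta)}{\sqrt{-1}^{1+\beta}}\Big).
\]
Together with the shift identity $\sum_\alpha \alpha \mathcal{C}_{g'}((\alpha);\tau)e^{-\alpha w}/\sqrt{-1}^{1+\alpha} = -\sum_b \langle\tau_b\Gamma_{g'}\rangle \hat{\Psi}_{b+1}(t;\tau)$, which follows because multiplication by $\alpha$ raises the exponent $k^b$ to $k^{b+1}$ inside $\hat{\Psi}_b$, this yields the ``stable'' portion of the right-hand side of Proposition 3.1: the $\mathcal{C}_{g-1}((\alpha,\beta);\tau)$ term contributes $\tfrac{1}{2}\tau(\tau+1)\sum\langle\tau_{a_1}\tau_{a_2}\Gamma_{g-1}\rangle_{g-1}\hat{\Psi}_{a_1+1}\hat{\Psi}_{a_2+1}$, and the $g_1,g_2\geq 1$ part of the genus-split term contributes $-\tfrac{1}{2}\sum\langle\tau_{a_1}\Gamma_{g_1}\rangle\langle\tau_{a_2}\Gamma_{g_2}\rangle\hat{\Psi}_{a_1+1}\hat{\Psi}_{a_2+1}$.

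The main obstacle will be accounting for the third left-hand term $-\sum_b \langle\tau_b\Gamma_g\rangle \hat{\Psi}_{b+1}/(t\tau+1)$, which must arise from the \emph{unstable} right-hand contributions where $g_1 = 0$ or $g_2 = 0$ (invoking the degenerate evaluation $\int_{\overline{\mathcal{M}}_{0,1}}(1-\mu\psi)^{-1} = \mu^{-2}$), together with, in the case $g = 1$, the $\mathcal{C}_0((\alpha,\beta);\tau)$ piece (using $\int_{\overline{\mathcal{M}}_{0,2}}\prod(1-\mu_i\psi_i)^{-1} = (\mu_1+\mu_2)^{-1}$). After Laplace transformation these produce terms involving the auxiliary series $\hat{\Phi}(t;\tau) := \tau^{-1}\sum_{k\geq 1} \prod_{a=0}^{k-1}(k\tau+a)\, e^{-kw}/(k\cdot k!)$, which by Lagrange inversion on $x = y(1-y)^\tau$ equals $-\ln(1-y)$. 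Using the algebraic parametrization $1-(\tau+1)y = 1/t$, equivalently $y = (t-1)/[(\tau+1)t]$, derived from the series relations for $t$ and $y$, one rewrites $\hat{\Phi} = \ln\frac{(\tau+1)t}{t\tau+1}$. Decomposing $\partial_\tau|_w\hat{\Psi}_b = \partial_\tau|_t\hat{\Psi}_b + (\partial_t\hat{\Psi}_b)\,\partial_\tau|_w t$ and using the recursion $\hat{\Psi}_{b+1} = (t^2-t)\frac{t\tau+1}{\tau+1}\partial_t\hat{\Psi}_b$, I expect the logarithmic $\hat{\Phi}$ contributions to cancel exactly against $(\partial_t\hat{\Psi}_b)\,\partial_\tau|_w t$, leaving precisely the clean rational term $\hat{\Psi}_{b+1}/(t\tau+1)$. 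Tracking the combinatorial factors (ordered vs.\ unordered splittings $\nu_1\coprod\nu_2$ and the $|\operatorname{Aut}|$-compensations) and verifying the cancellation of the $\ln(1-y)$ piece through the Lambert-curve identities is the crux of the argument.
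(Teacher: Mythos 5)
Your plan follows essentially the same route as the paper's proof: Laplace-transform the $l=1$ cut-and-join equation, decouple the cut sum via the $\sqrt{-1}$-splitting and the shift identity $\alpha\mapsto\hat{\Psi}_{b+1}$, and absorb the unstable ($g_1=0$ or $g_2=0$) contributions — which produce $-\ln(1-y)=\ln\frac{(\tau+1)t}{t\tau+1}$ — into the chain-rule cross term $(\partial_t\hat{\Psi}_b)\,\partial_\tau|_w t$ using implicit differentiation of the Lambert curve, leaving exactly $\frac{t^2-t}{\tau+1}\partial_t\hat{\Psi}_b=\frac{1}{t\tau+1}\hat{\Psi}_{b+1}$. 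This is precisely the cancellation the paper carries out via $\frac{\partial t}{\partial\tau}=\frac{t^2-t}{\tau+1}+\frac{\partial y}{\partial\tau}t^2(\tau+1)$ and $-\ln(1-y)=\frac{\partial y}{\partial\tau}\frac{1-(\tau+1)y}{y(1-y)}$, so the proposal is correct and not a genuinely different argument.
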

\begin{proof}
The cut-and-join equation for $l=1$ case is
\begin{align}
\frac{\partial\mathcal{C}_{g}(\mu;\tau)}{\partial
\tau}=\frac{\sqrt{-1}}{2}\sum_{\alpha+\beta=\mu}\alpha\beta\left(\frac{\mathcal{C}_{g-1}((\alpha,\beta);\tau)}{|Aut((\alpha,\beta))|}
+\sum_{g_{1}+g_{2}=g}\mathcal{C}_{g_{1}}(\alpha;\tau)\mathcal{C}_{g_{2}}(\beta;\tau)\right)
\end{align}
Then, the Laplace transform of the LHS of $(6)$ is
\begin{align}
\frac{\partial}{\partial
\tau}\mathcal{\hat{C}}_{g,1}(t;\tau)=\frac{\partial t}{\partial
\tau}\frac{\partial}{\partial
t}\mathcal{\hat{C}}_{g,1}(t;\tau)+\frac{\partial}{\partial\tau}\mathcal{\hat{C}}_{g,1}(t;\tau)
\end{align}

The Laplace transform of the stable part of RHS of $(6)$ is
\begin{align*}
&\frac{1}{2}(\tau(\tau+1))\sum_{a_{1},a_{2}\geq
0}\langle\tau_{a_{1}}\tau_{a_{2}}
\Gamma_{g-1}(\tau)\rangle_{g-1}\hat{\Psi}_{a_{1}+1}(t;\tau)
\hat{\Psi}_{a_{2}+1}(t;\tau)\\
&-\frac{1}{2}\sum_{\substack{g_{1}+g_{2}=g\\g_{1}>0\\g_{2}>0}}\langle\tau_{a_{1}}
\Gamma_{g_{1}}(\tau)\rangle_{g_{1}}\langle\tau_{a_{2}}
\Gamma_{g_{2}}(\tau)\rangle_{g_{2}}\hat{\Psi}_{a_{1}+1}(t;\tau)
\hat{\Psi}_{a_{2}+1}(t;\tau)
\end{align*}

The unstable term is
$\mathcal{C}_{0}(\alpha;\tau)\mathcal{C}_{g}(\beta;\tau)+\mathcal{C}_{g}(\alpha;\tau)\mathcal{C}_{0}(\beta;\tau)$.
Because
\begin{align*}
\mathcal{C}_{0,d}(\tau)=-\sqrt{-1}^{d+1}\frac{1}{\tau}\frac{\prod_{a=0}^{d-1}(d\tau+a)}{d!}\frac{1}{d^2}
\end{align*}
It's Laplace transform $\hat{\mathcal{C}}_{0}(w;\tau)=\sum_{d\geq
1}-\frac{1}{\sqrt{-1}^{d+1}}\mathcal{C}_{0,d}(\tau)e^{-w}$
satisfies:
\begin{align*}
\frac{d}{dw}\hat{\mathcal{C}}_{0}(w;\tau)=\frac{t-1}{\tau+1}=\frac{y}{1-(\tau+1)y}
\end{align*}
It is easy to calculate
$\frac{d}{dw}=\frac{(1-y)y}{1-(1+\tau)y}\frac{d}{dy}$, hence,
$\hat{\mathcal{C}}_{0}(w;\tau)=-ln(1-y)$. Moreover, remember that
the framed Lambert curve is $y(1-y)^{\tau}=x$, where y depends on
$\tau$. Taking derivation of $\tau$, we have the identity,
\begin{align*}
-ln(1-y)=\frac{\partial y}{\partial \tau}\frac{1-(\tau+1)y}{y(1-y)}
\end{align*}

Therefore, the unstable part of RHS of $(6)$ is
\begin{align}
\frac{\partial y}{\partial
\tau}\frac{1-(\tau+1)y}{y(1-y)}\frac{(t^2-t)(t\tau+1)}{\tau+1}\frac{\partial}{\partial
t}\hat{\mathcal{C}}_{g,1}(t;\tau)=\frac{\partial y}{\partial
\tau}t^2(\tau+1)\frac{\partial}{\partial
t}\hat{\mathcal{C}}_{g,1}(t;\tau)
\end{align}
where we have used $t=\frac{1}{1-(\tau+1)y}$. we also have
\begin{align}
\frac{\partial t}{\partial \tau}=t^{2}y+\frac{\partial y}{\partial
\tau}t^2(\tau+1)=\frac{t^2-t}{\tau+1}+\frac{\partial y}{\partial
\tau}t^2(\tau+1)
\end{align}
Hence, move the unstable part to left hand side, by $(7)$,$(8)$ and
$(9)$, we get
\begin{align*}
\left(\frac{\partial}{\partial
\tau}+\frac{t^2-t}{(\tau+1)}\frac{\partial }{\partial
t}\right)\hat{C}_{g,1}(t;\tau)=stable\quad part
\end{align*}
which is just the Proposition 3.1.
\end{proof}

For the general case of $l$, we need to introduce two lemmas first.

\begin{lemma}
When $(g,l)=(0,2)$, we have the following Laplace transformation
formula:
\begin{align*}
\hat{\mathcal{C}}_{0,2}(w_{1},w_{2};\tau)&=-\sum_{\alpha,\beta \geq
1}\frac{\tau+1}{\tau}\frac{1}{\alpha+\beta}\frac{\prod_{a=0}^{\alpha-1}(\alpha\tau+a)}{\alpha!}
\frac{\prod_{a=0}^{\beta-1}(\beta\tau+a)}{\beta!}e^{-\alpha w_{1}}e^{-\beta w_2}\\
&=-ln\left(\frac{y_{1}-y_{2}}{x_{1}-x_{2}}\right)-\tau\left(ln(1-y_{1})+ln(1-y_{2})\right)
\end{align*}
\end{lemma}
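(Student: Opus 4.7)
The plan is to prove the two equalities in turn. The first equality follows immediately by substituting the closed formula for $\mathcal{C}_{0,(\mu_1,\mu_2)}(\tau)$ recorded earlier in Section 2 (which uses the unstable convention $\int_{\overline{\mathcal{M}}_{0,2}} = 1/(\mu_1+\mu_2)$) into the definition of the Laplace transform. The $|Aut(\mu)|$ factors cancel against the identity $\mathcal{C}_g(\mu;\tau) = |Aut(\mu)|\mathcal{C}_{g,\mu}(\tau)$, and the overall sign $-1$ comes from $\sqrt{-1}^{\mu_1+\mu_2}/\sqrt{-1}^{2+\mu_1+\mu_2} = -1$.

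For the second equality my approach is to reduce the double sum to a line integral via the representation $\frac{1}{\alpha+\beta} = \int_0^1 s^{\alpha+\beta-1}\,ds$, which decouples the indices:
\[
\hat{\mathcal{C}}_{0,2}(w_1,w_2;\tau) = -\frac{\tau+1}{\tau}\int_0^1 \frac{ds}{s}\,\Phi(se^{-w_1})\,\Phi(se^{-w_2}),
\]
where $\Phi(x) := \sum_{k\geq 1}\frac{\prod_{a=0}^{k-1}(k\tau+a)}{k!}x^k$. Next I would identify $\Phi$ in closed form: shifting the index in the product and using $xy'(x) = y(1-y)t$ (immediate from implicitly differentiating $x=y(1-y)^\tau$) gives $\Phi(x) = (\tau+1)xy'(x) - y(x) = \tau\, y(x)\,t(x)$. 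Substituting $s=e^{-\sigma}$ then recasts the expression as
\[
\hat{\mathcal{C}}_{0,2}(w_1,w_2;\tau) = -\tau(\tau+1)\int_0^\infty \tilde y_1\tilde t_1 \tilde y_2 \tilde t_2\, d\sigma,
\]
where $\tilde y_i,\tilde t_i$ are evaluated along the trajectory $\tilde x_i = e^{-(w_i+\sigma)}$.

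The crucial step is to recognize the integrand as a total $\sigma$-derivative. From $\partial_\sigma \tilde y_i = -\tilde y_i(1-\tilde y_i)\tilde t_i$ one has immediately $\partial_\sigma \ln(1-\tilde y_i) = \tilde y_i \tilde t_i = (\tilde t_i - 1)/(\tau+1)$. The more delicate identity is
\[
\partial_\sigma \ln\!\left(\frac{\tilde y_1 - \tilde y_2}{\tilde x_1 - \tilde x_2}\right) = \frac{\tau(1 - \tilde t_1\tilde t_2)}{\tau+1},
\]
which, using $\partial_\sigma \ln(\tilde x_1 - \tilde x_2) = -1$, reduces to the algebraic identity
\[
\frac{y_1(1-y_1)t_1 - y_2(1-y_2)t_2}{y_1 - y_2} = \frac{\tau t_1 t_2 + 1}{\tau+1}.
\]
I would verify this last identity by substituting $y_i = (t_i-1)/((\tau+1)t_i)$ and $1-y_i = (\tau t_i+1)/((\tau+1)t_i)$; both numerator and denominator then acquire an explicit factor $(t_1-t_2)$ that cancels.

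With these two identities in hand, $(\tilde t_1-1)(\tilde t_2-1) = (\tilde t_1\tilde t_2-1) - (\tilde t_1-1) - (\tilde t_2-1)$ becomes an exact $\sigma$-derivative, so the integral telescopes. The $\sigma\to\infty$ boundary vanishes because $\tilde y_i\to 0$ forces $\ln(1-\tilde y_i)\to 0$ and the divided difference $(\tilde y_1-\tilde y_2)/(\tilde x_1-\tilde x_2)\to y'(0) = 1$, while the $\sigma=0$ boundary reproduces the values at the original $(w_1,w_2)$. Assembling the constants $-\tau(\tau+1)/(\tau+1)^2 = -\tau/(\tau+1)$ with the boundary contributions yields exactly the claimed formula. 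The main obstacle is the algebraic identity above — it is the only place where the specific form of the framed Lambert curve enters, and everything else is bookkeeping around it.
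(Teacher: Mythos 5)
Your proof is correct, and every step checks out: the identification $\Phi(x)=(\tau+1)xy'(x)-y(x)=\tau\,y(x)t(x)$ is right (it is just the statement $t-1=\tfrac{\tau+1}{\tau}\Phi$ combined with $t-1=(\tau+1)yt$), the algebraic identity $\frac{y_1(1-y_1)t_1-y_2(1-y_2)t_2}{y_1-y_2}=\frac{\tau t_1t_2+1}{\tau+1}$ does hold (both numerator and denominator acquire the factor $t_1-t_2$ as you say), and the boundary terms behave as claimed since $y(x)=x+O(x^2)$ gives $y'(0)=1$. The route differs from the paper's mainly in packaging. The paper applies the operator $\tfrac{d}{dw_1}+\tfrac{d}{dw_2}$ to the double sum, which kills the $\tfrac{1}{\alpha+\beta}$ and decouples it into $-\tau(\tau+1)\hat{\Psi}_0(t_1;\tau)\hat{\Psi}_0(t_2;\tau)$, and then asserts that antidifferentiating recovers the stated logarithmic expression (``it is easy to get''). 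Your integral representation $\tfrac{1}{\alpha+\beta}=\int_0^1 s^{\alpha+\beta-1}ds$, after the substitution $s=e^{-\sigma}$, is exactly the integrated form of that same differential identity along the diagonal flow $(w_1,w_2)\mapsto(w_1+\sigma,w_2+\sigma)$; so the underlying mechanism is identical. What your version buys is that it forces the two pieces the paper suppresses into the open: the explicit antiderivative identity $\partial_\sigma\ln\bigl(\tfrac{\tilde y_1-\tilde y_2}{\tilde x_1-\tilde x_2}\bigr)=\tfrac{\tau(1-\tilde t_1\tilde t_2)}{\tau+1}$, and the vanishing of the constant of integration via the $\sigma\to\infty$ boundary term. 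In that sense your argument is a complete verification of the step the paper leaves to the reader, at the cost of somewhat more bookkeeping.
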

\begin{proof}
By definition,
\begin{align*}
\hat{\mathcal{C}}_{0,2}(w_{1},w_{2};\tau)&=\sum_{\alpha,\beta\geq
1}\frac{1}{\sqrt{-1}^{2+\alpha+\beta}}|Aut(\alpha,\beta)|\hat{\mathcal{C}}_{0,(\alpha,\beta)}\\
&=-\sum_{\alpha,\beta \geq
1}\frac{\tau+1}{\tau}\frac{1}{\alpha+\beta}\frac{\prod_{a=0}^{\alpha-1}(\alpha\tau+a)}{\alpha!}
\frac{\prod_{a=0}^{\beta-1}(\beta\tau+a)}{\beta!}e^{-\alpha
w_{1}}e^{-\beta w_2}
\end{align*}
Thus
\begin{align*}
&\left(\frac{d}{dw_{1}}+\frac{d}{dw_{2}}\right)\hat{\mathcal{C}}_{0,2}(w_{1},w_{2};\tau)=-\tau(\tau+1)\hat{\Psi}_{0}(t_{1};\tau)
\hat{\Psi}_{0}(t_{2};\tau)\\
&=-\tau(\tau+1)\frac{y_{1}y_{2}}{(1-(\tau+1)y_{1})(1-(\tau+1)y_{2})}
\end{align*}
Because,
\begin{align*}
\frac{d}{dw}=x\frac{d}{dx}=\frac{(1-y)y}{1-(\tau+1)y}\frac{d}{dy}
\end{align*}
Then, it is easy to get
\begin{align*}
\hat{\mathcal{C}}_{0,2}(w_{1},w_{2};\tau)=-ln\left(\frac{y_{1}-y_{2}}{x_{1}-x_{2}}\right)-\tau\left(ln(1-y_{1})+ln(1-y_{2})\right)
\end{align*}
\end{proof}

\begin{lemma}
\begin{align*}
&\sum_{\alpha,\beta\geq
1}\frac{1}{\tau}\frac{\prod_{a=0}^{(\alpha+\beta)-1}((\alpha+\beta)\tau+a)}{(\alpha+\beta)!}(\alpha+\beta)^{a+1}
e^{-\alpha w_{i}}e^{-\beta w_{j}}\\
&=\frac{x_{i}}{x_{i}-x_{j}}\hat{\Psi}_{a+1}(t_{i};\tau)-\frac{x_{j}}{x_{i}-x_{j}}\hat{\Psi}_{a+1}(t_{j};\tau)
-\hat{\Psi}_{a+1}(t_{i};\tau)
-\hat{\Psi}_{a+1}(t_{j};\tau)
\end{align*}
\end{lemma}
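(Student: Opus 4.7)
The plan is to reindex the double sum by setting $k=\alpha+\beta\geq 2$, which factors the $(\alpha+\beta)$-dependent coefficient out of the sum. Indeed, the coefficient
$$f_{a+1}(k):=\frac{1}{\tau}\frac{\prod_{a'=0}^{k-1}(k\tau+a')}{k!}k^{a+1}$$
is precisely the coefficient of $e^{-kw}$ in the series defining $\hat{\Psi}_{a+1}(t;\tau)$. Writing $x_i=e^{-w_i}$, $x_j=e^{-w_j}$, the left-hand side thus becomes
$$\sum_{k\geq 2} f_{a+1}(k)\sum_{\substack{\alpha+\beta=k\\ \alpha,\beta\geq 1}} x_i^{\alpha} x_j^{\beta}.$$
First I would dispose of the inner sum by the elementary geometric-series identity
$$\sum_{\alpha=1}^{k-1} x_i^{\alpha} x_j^{k-\alpha}=\frac{x_i^{k+1}-x_j^{k+1}}{x_i-x_j}-x_i^{k}-x_j^{k},$$
which is immediate from $\frac{x_i^{k+1}-x_j^{k+1}}{x_i-x_j}=\sum_{\alpha=0}^{k} x_i^{\alpha}x_j^{k-\alpha}$.

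Next I would interchange the order of summation and recognize each of the three pieces. Observe $f_{a+1}(1)=1$, so $\sum_{k\geq 2} f_{a+1}(k)\, x_i^{k}=\hat{\Psi}_{a+1}(t_i;\tau)-x_i$ and likewise for $j$. For the divided-difference piece, factor $x_i^{k+1}=x_i\cdot x_i^{k}$ and $x_j^{k+1}=x_j\cdot x_j^{k}$; the same reduction then yields
$$\sum_{k\geq 2} f_{a+1}(k)\,\frac{x_i^{k+1}-x_j^{k+1}}{x_i-x_j}=\frac{x_i\hat{\Psi}_{a+1}(t_i;\tau)-x_j\hat{\Psi}_{a+1}(t_j;\tau)}{x_i-x_j}-(x_i+x_j).$$

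Finally I would combine everything. The stray $-(x_i+x_j)$ from the divided-difference piece exactly cancels the $+x_i$ and $+x_j$ one picks up when subtracting the other two pieces, so the $k=1$ boundary contributions wash out entirely and what remains is precisely the right-hand side of the lemma. The only subtlety is this boundary bookkeeping: the original sum runs over $\alpha,\beta\geq 1$ (giving $k\geq 2$), whereas the series for $\hat{\Psi}_{a+1}$ starts at $k=1$, and one must account for the mismatch consistently in all three pieces. Once that is done the verification is purely algebraic.
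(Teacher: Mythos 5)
Your proof is correct and is essentially the paper's own argument: both reindex the double sum by $k=\alpha+\beta$, evaluate the inner geometric sum as $\frac{x_i^{k+1}-x_j^{k+1}}{x_i-x_j}$, and recognize the resulting single sums as $\hat{\Psi}_{a+1}$. The only (immaterial) difference is bookkeeping order — the paper first completes the index range to $\alpha,\beta\geq 0$ and subtracts the two boundary sums $\hat{\Psi}_{a+1}(t_i)+\hat{\Psi}_{a+1}(t_j)$ wholesale, whereas you subtract the $\alpha=0$ and $\beta=0$ contributions slice by slice and cancel the $k=1$ terms at the end; your check that $f_{a+1}(1)=1$ makes that cancellation explicit.
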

\begin{proof}
This calculation is the same as formula $(3.15)$ showed in paper
\cite{EMS}.

Let $\mu=\alpha+\beta$ and $\nu=\beta$, then
\begin{align*}
&\sum_{\alpha,\beta\geq
1}\frac{1}{\tau}\frac{\prod_{a=0}^{(\alpha+\beta)-1}((\alpha+\beta)\tau+a)}{(\alpha+\beta)!}
(\alpha+\beta)^{a+1}e^{-\alpha w_{i}}e^{-\beta w_{j}}\\
&=\sum_{\alpha,\beta\geq
0}\frac{1}{\tau}\frac{\prod_{a=0}^{(\alpha+\beta)-1}((\alpha+\beta)\tau+a)}{(\alpha+\beta)!}
(\alpha+\beta)^{a+1}e^{-\alpha w_{i} }e^{-\beta w_{j} }\\
&-\sum_{\alpha\geq
1}\frac{1}{\tau}\frac{\prod_{a=0}^{\alpha-1}(\alpha\tau+a)}{\alpha!}\alpha^{a+1}e^{-\alpha
w_{i}}-\sum_{\beta\geq
1}\frac{1}{\tau}\frac{\prod_{a=0}^{\beta-1}(\beta\tau+a)}{\beta!}\beta^{a+1}e^{-\beta w_{j}}\\
&=\sum_{\mu\geq
0}\sum_{\nu=0}^{\mu}\frac{1}{\tau}\frac{\prod_{a=0}^{\mu-1}(\mu\tau+a)}{\mu!}\mu^{a+1}e^{-(\mu-\nu)
w_{i}}e^{-\nu w_{j} } -\hat{\Psi}_{a+1}(t_{i};\tau)
-\hat{\Psi}_{a+1}(t_{j};\tau)\\
&=\sum_{\mu\geq
0}\frac{1}{\tau}\frac{\prod_{a=0}^{\mu-1}(\mu\tau+a)}{\mu!}\mu^{a+1}\frac{x_{i}^{\mu+1}-x_{j}^{\mu+1}}{x_{i}-x_{j}}
-\hat{\Psi}_{a+1}(t_{i};\tau)
-\hat{\Psi}_{a+1}(t_{j};\tau)\\
&=\frac{x_{i}}{x_{i}-x_{j}}\hat{\Psi}_{a+1}(t_{i};\tau)-\frac{x_{j}}{x_{i}-x_{j}}\hat{\Psi}_{a+1}(t_{j};\tau)
-\hat{\Psi}_{a+1}(t_{i};\tau)
-\hat{\Psi}_{a+1}(t_{j};\tau)
\end{align*}
\end{proof}

Now we give our main result in this paper.
\begin{theorem}
For $g\geq 1$ and $l\geq 1$, we have the following equation:
\begin{align}
&-(\tau^2+\tau)^{l-2}\sum_{b_{L}\geq
0}\left((l-1)(2\tau+1)\langle\tau_{b_L}\Gamma_{g}(\tau)\rangle_{g}+
(\tau^{2}+\tau)\langle\tau_{b_{L}}\frac{d}{d\tau}\Gamma_{g}(\tau)\rangle_{g}\right)
\hat{\Psi}_{b_{L}}(t_{L};\tau)\\\nonumber
&-(\tau^{2}+\tau)^{l-1}\sum_{b_{L}\geq
0}\langle\tau_{b_{L}}\Gamma_{g}(\tau)\rangle_{g}\sum_{i=1}^{l}\left(\frac{\partial}{\partial
\tau}\hat{\Psi}_{b_{i}}(t_{i};\tau)+\frac{1}{t_{i}\tau+1}\hat{\Psi}_{b_{i}+1}(t_{i};\tau)\right)\hat{\Psi}_{b_{L\setminus
\{i\}}}(t_{L\setminus \{i\}};\tau)\\\nonumber
&=-\frac{(\tau^2+\tau)^{l-2}}{\tau+1}\sum_{1\leq i<j\leq
l}\sum_{\substack{a\geq 0\\b_{L\setminus\{i,j\} }\geq
0}}\langle\tau_{a}\tau_{b_{L\setminus\{i,j\}}}\Gamma_{g}(\tau)\rangle_{g}
\hat{\Psi}_{b_{L\setminus\{i,j\}
}}(t_{L\setminus\{i,j\}};\tau)\\\nonumber
&\cdot\frac{(t_{j}-1)(t_{i}^2\tau+t_{i})\hat{\Psi}_{a+1}(t_{i};\tau)-(t_{i}-1)(t_{j}^2\tau+t_{j})\hat{\Psi}_{a+1}(t_{j};\tau)}
{t_{i}-t_{j}}\\\nonumber
&+\frac{(\tau^2+\tau)^{l-1}}{2}\sum_{i=1}^{l}\sum_{\substack{a_{1}\geq
0\\a_{2}\geq 0\\b_{L\setminus\{i\}}\geq
0}}\left((\tau^2+\tau)\langle\tau_{a_{1}}\tau_{a_{2}}\tau_{b_{L\setminus\{i\}}}\Gamma_{g-1}(\tau)\rangle_{g-1}\right.\\\nonumber
&\left.-\sum_{\substack{g_{1}+g_{2}=g\\
\mathcal{I}\coprod\mathcal{J}=L\setminus\{i\}}}^{stable}\langle\tau_{a_{1}}\tau_{b_{\mathcal{I}}}\Gamma_{g_{1}}(\tau)
\rangle_{g_{1}}\langle\tau_{a_{2}}
\tau_{b_{\mathcal{J}}}\Gamma_{g_{2}}(\tau)\rangle_{g_{2}}\right)
\prod_{n=1}^{2}\hat{\Psi}_{a_{n}+1}(t_{i};\tau)\hat{\Psi}_{b_{L\setminus\{i\}}}(t_{L\setminus\{i\}};\tau)
\end{align}
\end{theorem}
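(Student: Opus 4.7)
The plan is to mimic the $l=1$ argument of Proposition 3.1 but apply it termwise to equation (5), the general cut-and-join equation for $\partial_\tau(\mathcal{C}_g(\mu;\tau)/|\mathrm{Aut}(\mu)|)$. First I would multiply both sides by $\sqrt{-1}^{-(l+|\mu|)}|\mathrm{Aut}(\mu)|$ and sum against $e^{-\mu_1 w_1-\cdots-\mu_l w_l}$ over $\mu\in\mathbb{N}^l$. Using the expression (4) for $\hat{\mathcal{C}}_{g,l}$, the left-hand side becomes $\partial_\tau \hat{\mathcal{C}}_{g,l}(t_1,\dots,t_l)$, which by the chain rule splits into $\sum_i \frac{\partial t_i}{\partial \tau}\partial_{t_i}\hat{\mathcal{C}}_{g,l}+\partial_\tau \hat{\mathcal{C}}_{g,l}$ with $\tau$ held fixed in the $t$-variables; one expands $\partial_\tau$ through the prefactor $(\tau(\tau+1))^{l-1}$ and the Hodge-class series $\Gamma_g(\tau)$ to produce the two terms on the LHS of (1) up to the $\frac{\partial t_i}{\partial \tau}$ correction, exactly as in the $l=1$ case.

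Next I would handle the right-hand side of (5). The join term $\sum_{i<j}(\mu_i+\mu_j)\mathcal{C}_g(\mu(\hat i,\hat j);\tau)/|\mathrm{Aut}(\mu(\hat i,\hat j))|$ translates, via Lemma 3.3 applied with the pair $(w_i,w_j)$ fused into a single new variable corresponding to the part $\mu_i+\mu_j$, into exactly the first symmetric difference quotient on the RHS of (1), namely the $(t_j-1)(t_i^2\tau+t_i)\hat\Psi_{a+1}(t_i;\tau)-(t_i-1)(t_j^2\tau+t_j)\hat\Psi_{a+1}(t_j;\tau)$ expression divided by $t_i-t_j$. Here the factor $\tfrac{(\tau^2+\tau)^{l-2}}{\tau+1}$ arises because after the join the remaining variables contribute $(\tau(\tau+1))^{l-2}$ from (4), and converting the $e^{-\mu w}$ sums into $\hat\Psi_{a+1}(t;\tau)$ introduces the $\tau$-normalization of Lemma 3.3. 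For the cut term I would apply Lemma 3.3 analogously on the variable $w_i$ alone, giving $\hat\Psi_{a_1+1}(t_i;\tau)\hat\Psi_{a_2+1}(t_i;\tau)$ for each split of $\mu_i=\alpha+\beta$; the stable $g_1+g_2=g$ pieces match the last line of (1) directly.

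The core obstacle will be the unstable contributions. On the cut side, the pieces where $(g_k,|\nu_k|)=(0,1)$ (giving $\mathcal{C}_0(\alpha;\tau)\mathcal{C}_g(\beta;\tau)$) and on the join side the pieces with new part equal to an existing one forcing $\mathcal{C}_{0,2}$ factors, must be computed separately and then moved to the LHS. For the $(0,1)$ contributions I would use the identity $\frac{d}{dw}\hat{\mathcal C}_0(w;\tau)=-\ln(1-y)$ together with $-\ln(1-y)=\tfrac{\partial y}{\partial\tau}\tfrac{1-(\tau+1)y}{y(1-y)}$ from Section 3, exactly as in the $l=1$ proof, so that each such contribution combines with $\frac{\partial t_i}{\partial\tau}\partial_{t_i}\hat{\mathcal{C}}_{g,l}$ to produce the $\tfrac{1}{t_i\tau+1}\hat\Psi_{b_i+1}(t_i;\tau)$ correction in (1). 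For the $(0,2)$ unstable contributions I would invoke Lemma 3.2 to write $\hat{\mathcal{C}}_{0,2}$ as $-\ln\!\bigl(\tfrac{y_1-y_2}{x_1-x_2}\bigr)-\tau(\ln(1-y_1)+\ln(1-y_2))$ and verify that the appropriate derivatives cancel the spurious poles at $t_i=t_j$ in the join expression, leaving a polynomial quotient.

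Finally, after gathering every unstable correction on the LHS and grouping by its natural multiplicative factor $(\tau^2+\tau)^{l-1}$ or $(\tau^2+\tau)^{l-2}$, one reads off (1). The main technical difficulty is bookkeeping: tracking which unstable splittings feed back into which LHS term, ensuring that the prefactor powers of $\tau(\tau+1)$ that emerge from Lemma 3.3 match those demanded by the formula, and verifying that the $(l-1)(2\tau+1)$ coefficient on the LHS arises correctly from the product rule on $(\tau(\tau+1))^{l-1}$ combined with the unstable contributions. I expect this to be the only nontrivial part; once the $l=1$ pattern of Proposition 3.1 is set up, the general $l$ case is a direct but careful extension.
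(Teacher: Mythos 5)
Your proposal follows essentially the same route as the paper's proof of Theorem 3.4: Laplace-transform both sides of the cut-and-join equation (5), use the chain rule and the product rule on $(\tau^2+\tau)^{l-1}\Gamma_{g}(\tau)$ for the left side, Lemma 3.3 for the join term, and isolate the unstable $(0,1)$ and $(0,2)$ contributions so that the former cancel the $\partial t_{i}/\partial\tau$ correction (leaving the $\frac{1}{t_{i}\tau+1}\hat{\Psi}_{b_{i}+1}$ term) and the latter combine with the join term to remove the poles at $t_{i}=t_{j}$. The only slip is one of bookkeeping: the $\mathcal{C}_{0,2}$ contributions (the paper's $U_{2}$) arise from the \emph{cut} term of (5) --- a genus-zero two-pointed component carrying $\mu_{j}$ together with one of the cut pieces --- not from the join term, so the join term by itself does not yet yield the polynomial difference quotient; it does so only after $U_{2}$ is added, exactly as you in fact describe in your third paragraph.
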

\begin{proof}
The Laplace transform of LHS of equation $(5)$ is
\begin{align}
&\sum_{\mu\in
\mathbb{N}^{l}}\frac{1}{\sqrt{-1}^{l(\mu)+|\mu|}}\frac{\partial}{\partial
\tau}\mathcal{C}_{g}(\mu;\tau)e^{-(\mu_{1}w_{1}+\cdots+\mu_{l}w_{l})}\\\nonumber
&=\frac{\partial}{\partial \tau}\left(\sum_{\mu\in
\mathbb{N}}\frac{1}{\sqrt{-1}^{l(\mu)+|\mu|}}\mathcal{C}_{g}(\mu;\tau)e^{-(\mu_{1}w_{1}+\cdots+\mu_{l}w_{l})}
\right)\\\nonumber &=\frac{\partial}{\partial
\tau}\hat{\mathcal{C}}_{g,l}(t_{1},..,t_{l};\tau)\\\nonumber
&=\sum_{i=1}^{l}\frac{\partial t_{i}}{\partial
\tau}\frac{\partial}{\partial
t_{i}}\hat{\mathcal{C}}_{g,l}(t_{1},..,t_{l};\tau)+\frac{\partial}{\partial
\tau}\hat{\mathcal{C}}_{g,l}(t_{1},..,t_{l};\tau)
\end{align}

The Laplace transform of stable geometry in the cut-term of RHS of
$(5)$ is
\begin{align}
&\frac{\sqrt{-1}}{2}\sum_{\mu\in
\mathbb{N}^{l}}\frac{1}{\sqrt{-1}^{l(\mu)+|\mu|}}\sum_{i=1}^{l}\sum_{\alpha+\beta=\mu_{i}}\alpha\beta
(\mathcal{C}_{g-1}(\mu(\alpha,\hat{i});\tau)\\\nonumber
&+\sum_{\substack{g_{1}+g_{2}=g\\\nu_{1}\coprod\nu_{2}=\mu(\alpha,\hat{i})\\2g_{1}-2+|\nu_{1}|>0\\
2g_{2}-2+|\nu_{2}|>0}}\mathcal{C}_{g_{1}}(\nu_{1};\tau)\mathcal{C}_{g_2}(\nu;\tau))e^{-(\mu_{1}w_{1}+\cdots+\mu_{l}w_{l})}\\\nonumber
&=\frac{(\tau^2+\tau)^{l-1}}{2}\sum_{i=1}^{l}\sum_{\substack{a_{1}\geq
0\\a_{2}\geq 0\\b_{L\setminus\{ k\}} \geq
0}}((\tau^2+\tau)\langle\tau_{a_{1}}\tau_{a_{2}}\tau_{b_{L\setminus
\{k\}}}\Gamma_{g-1}(\tau)\rangle_{g-1}\\\nonumber
&-\sum_{\substack{g_{1}+g_{2}=g\\ \mathcal{I}\coprod\mathcal{J}=\{1,.,\hat{i},.l\}\\2g_{1}-1+|\mathcal{I}|>0\\
2g_{2}-1+|\mathcal{J}|>0}}\langle\tau_{a_{1}}\tau_{b_{\mathcal{I}}}\Gamma_{g_{1}}(\tau)\rangle_{g_{1}}
\langle\tau_{a_{2}}\tau_{b_{\mathcal{J}}}\Gamma_{g_{2}}(\tau)\rangle_{g_{2}})
\prod_{n=1}^{2}\hat{\Psi}_{a_{n}+1}(t_{i};\tau)\hat{\Psi}_{b_{L\setminus\{i\}}}(t_{L\setminus\{i\}};\tau)
\end{align}

The unstable geometry in the cut-term of RHS has two terms for
$l\geq 2$.
\begin{align*}
U_{1}=\frac{\sqrt{-1}}{2}\sum_{i=1}^{l}\sum_{\alpha+\beta=\mu_{i}}\alpha\beta\left(\mathcal{C}_{0}(\alpha;\tau)\cdot
\frac{\mathcal{C}_{g}(\mu_{\hat{i}},\beta;\tau)}{|Aut(\mu_{\hat{i}})|}+
\frac{\mathcal{C}_{g}(\mu_{\hat{i}},\alpha;\tau)}{|Aut(\mu_{\hat{i}},\alpha)|}\cdot\mathcal{C}_{0}(\beta;\tau)
\right)
\end{align*}
\begin{align*}
U_{2}=\frac{\sqrt{-1}}{2}\sum_{i=1}^{l}\sum_{\alpha+\beta=\mu_{i}}\alpha\beta\sum_{j\neq
i}\left(\frac{\mathcal{C}_{0}(\mu_{j},\alpha;\tau)}{|Aut(\mu_{j},\alpha)|}\cdot
\frac{\mathcal{C}_{g}(\mu_{\hat{i},\hat{j}},\beta;\tau)}{|Aut(\mu_{\hat{i},\hat{j}})|}+
\frac{\mathcal{C}_{g}(\mu_{\hat{i},\hat{j}},\alpha;\tau)}{|Aut(\mu_{\hat{i},\hat{j}},\alpha)|}\cdot\mathcal{C}_{0}(\beta;\tau)
\right)
\end{align*}

As we have calculated in the proof of proposition 3.1, the Laplace
transform of $U_{1}$ is
\begin{align}
\sum_{i=1}^{l}\frac{\partial y_{l}}{\partial
\tau}t_{i}^2(\tau+1)\frac{\partial}{\partial
t_{i}}\hat{\mathcal{C}}_{g,l}(t_{1},..,t_{l};\tau)
\end{align}
Moving the formula $(13)$ to left hand side, it will cancel the
first term of formula $(11)$, i.e.
\begin{align}
&(11)-(13)=\left(\frac{\partial}{\partial
\tau}+\sum_{i=1}^{l}\frac{t_{i}^2-t_{i}}{\tau+1}\frac{\partial}{\partial
t_{i}}\right)\hat{\mathcal{C}}_{g,l}(t_{1},..,t_{l};\tau)\\\nonumber
&=-(\tau^2+\tau)^{l-2}\sum_{b_{L}\geq
0}\left((l-1)(2\tau+1)\langle\tau_{b_{L}}\Gamma_{g}(\tau)\rangle_{g}+
(\tau^{2}+\tau)\langle\tau_{b_{L}}\frac{d}{d\tau}\Gamma_{g}(\tau)\rangle_{g}\right)
\hat{\Psi}_{b_{L}}(t_{L};\tau)\\\nonumber
&-(\tau^{2}+\tau)^{l-1}\sum_{b_{L}\geq
0}\langle\tau_{b_{L}}\Gamma_{g}(\tau)\rangle_{g}\sum_{i=1}^{l}\left(\frac{\partial}{\partial
\tau}\hat{\Psi}_{b_{i}}(t_{i};\tau)+\frac{1}{t_{i}\tau+1}\hat{\Psi}_{b_{i}+1}(t_{i};\tau)\right)\hat{\Psi}_{b_{L\setminus
\{i\}}}(t_{L\setminus \{i\}};\tau)
\end{align}

The Laplace transform of $U_{2}$ equals to
\begin{align}
&\sum_{\mu\in\mathcal{P}_{l}}\frac{1}{\sqrt{-1}^{l+|\mu|}}U_{2}e^{-(\mu_{1}w_{1}+\cdots+\mu_{l}w_{l})}\\\nonumber
&=-\sum_{i=1}^{l}\sum_{j\neq i}\sum_{\substack{\mu_{j}\geq 1\\
\alpha\geq
1}}\frac{1}{\sqrt{-1}^{2+\mu_{j}+\alpha}}\alpha\mathcal{C}_{0}(\mu_{j},\alpha;\tau)e^{-\mu_{j}w_{j}}e^{-\alpha
w_{i}}\\\nonumber &\times \sum_{\substack{\beta\geq 0\\ \mu_{k}\geq
0\\k\neq
i,j}}\frac{1}{\sqrt{-1}^{l+|\mu|-\alpha-1}}\beta\mathcal{C}_{g}(\mu_{\hat{i},\hat{j}},\beta;\tau)e^{-\beta
w_{i}}e^{-\sum_{k\neq i,j}\mu_{k}w_{k}}\\\nonumber
&=\sum_{i=1}^{l}\sum_{j\neq i}\left(\frac{\partial}{\partial
w_{i}}\hat{\mathcal{C}}_{0,2}(w_{i},w_{j};\tau)(\tau^2+\tau)^{l-2}\sum_{\substack{a\geq
0\\b_{k}\geq 0, k\neq i,j}}\langle\tau_{a}\prod_{k\neq
i,j}\tau_{b_{k}}\Gamma_{g}(\tau)\rangle_{g}\hat{\Psi}_{a+1}(t_{i};\tau)\prod_{k\neq
i,j}\hat{\Psi}_{b_{k}}(t_{k};\tau)\right)\\\nonumber
&=\sum_{i=1}^{l}\sum_{j\neq
i}\left(-\frac{y_{i}}{y_{i}-y_{j}}(1+\frac{\tau
y_{j}}{1-(\tau+1)y_{i}})+\frac{x_{i}}{x_{i}-x_{j}}\right)\\\nonumber
&\cdot(\tau^2+\tau)^{l-2}\sum_{\substack{a\geq 0\\b_{k}\geq 0, k\neq
i,j}}\langle\tau_{a}\prod_{k\neq
i,j}\tau_{b_{k}}\Gamma_{g}(\tau)\rangle_{g}\hat{\Psi}_{a+1}(t_{i};\tau)\prod_{k\neq
i,j}\hat{\Psi}_{b_{k}}(t_{k};\tau)\\\nonumber
&=(\tau^2+\tau)^{l-2}\sum_{1\leq i<j\leq l}\sum_{\substack{a\geq
0\\b_{k}\geq 0, k\neq i,j}}\langle\tau_{a}\prod_{k\neq
i,j}\tau_{b_{k}}\Gamma_{g}(\tau)\rangle_{g}\prod_{k\neq
i,j}\hat{\Psi}_{b_{k}}(t_{k};\tau)(\frac{x_{i}}{x_{i}-x_{j}}\hat{\Psi}_{a+1}(t_{i};\tau)\\\nonumber
&-\frac{x_{j}}{x_{i}-x_{j}}
\hat{\Psi}_{a+1}(t_{j};\tau)-\frac{y_{i}}{y_{i}-y_{j}}(1+\frac{\tau
y_{j}}{1-(\tau+1)y_{i}})\hat{\Psi}_{a+1}(t_{i};\tau)+\frac{y_{j}}{y_{i}-y_{j}}(1+\frac{\tau
y_{i}}{1-(\tau+1)y_{j}})\hat{\Psi}_{a+1}(t_{j};\tau))
\end{align}
where we have used the lemma 3.3.

At last, we need to calculate the Laplace transform of join term in
RHS. It's Laplace transform is
\begin{align}
&\sqrt{-1}\sum_{\mu\in
\mathbb{N}^{l}}\sum_{i<j}\frac{1}{\sqrt{-1}^{l(\mu)+|\mu|}}(\mu_{i}+\mu_{j})\mathcal{C}_{g}(\mu(\hat{i},\hat{j});\tau)
e^{-\mu_{i}w_{i}}e^{-\mu_{j}w_{j}}e^{-\sum_{k\neq
i,j}\mu_{k}w_{k}}\\\nonumber &=-(\tau^2+\tau)^{l-2}\sum_{\mu\in
\mathbb{N}^{l}}\sum_{i<j}\sum_{\substack{a\geq
0\\b_{L\setminus\{i,j\}}\geq
0}}\langle\tau_{a}\tau_{b_{L\setminus\{i,j\}}}\Gamma_{g}(\tau)\rangle_{g}\prod_{k\neq
i,j}\frac{1}{\tau}\frac{\prod_{a=0}^{\mu_{k}-1}(\mu_{k}\tau+a)}{\mu_{k}!}\mu_{k}^{b_{k}}e^{-\sum_{k\neq
i,j}\mu_{k}w_{k}}\\\nonumber
&\cdot\frac{1}{\tau}\frac{\prod_{a=0}^{\mu_{i}+\mu_{j}-1}((\mu_{i}+\mu_{j})\tau+a)}{(\mu_{i}+\mu_{j})!}
(\mu_{i}+\mu_{j})^{a+1}e^{-\mu_{i}w_{i}}
e^{-\mu_{j}w_{j}}\\\nonumber &=-(\tau^2+\tau)^{l-2}\sum_{1\leq
i<j\leq l}\sum_{\substack{a\geq 0\\b_{L\setminus\{i,j\}\geq
0}}}\langle\tau_{a}\tau_{b_{L\setminus\{i,j\}}}\Gamma_{g}(\tau)\rangle_{g}\hat{\Psi}_{b_{L\setminus\{i,j\}}}(t_{L\setminus\{i,j\}};\tau)
\left(\frac{x_{i}}{x_{i}-x_{j}}\hat{\Psi}_{a+1}(t_{i};\tau)\right.\\\nonumber
&\left.-\frac{x_{j}}{x_{i}-x_{j}}\hat{\Psi}_{a+1}(t_{j};\tau)-\hat{\Psi}_{a+1}(t_{i};\tau)-\hat{\Psi}_{a+1}(t_{j};\tau)\right)
\end{align}

Combining (15) and (16), we get
\begin{align}
&(10)+(11)=-\frac{(\tau^2+\tau)^{l-2}}{\tau+1}\sum_{1\leq i<j\leq
l}\sum_{\substack{a\geq 0\\b_{L\setminus\{i,j\}}\geq
0}}\langle\tau_{a}\tau_{b_{L\setminus\{i,j\}}}\Gamma_{g}(\tau)\rangle_{g}
\hat{\Psi}_{b_{L\setminus\{i,j\}}}(t_{L\setminus\{i,j\}};\tau)\\\nonumber
&\cdot\frac{(t_{j}-1)(t_{i}^2\tau+t_{i})\hat{\Psi}_{a+1}(t_{i};\tau)-(t_{i}-1)(t_{j}^2\tau+t_{j})\hat{\Psi}_{a+1}(t_{j};\tau)}
{t_{i}-t_{j}}
\end{align}
Collecting the remainder equations  $(12)$,$(14)$ and $(17)$, we
obtain the equation $(10)$ of theorem 3.4.
\end{proof}
\section{Application I: Proof of the Bouchard-Mari\~no conjecture for $\mathbb{C}^{3}$}
Motivated by Eynard and his collaborators' series works on Matrix
model \cite{Ey1,EO1,EMO,EO2} , Bourchard, Klemm, Mari\~no and
Pasquetti propose a new approach to compute the topological string
amplitudes for local Calabi-Yau manifolds \cite{BKMP}. Based on such
proposal, Bouchard and Mari\~no calculated the string amplitude for
toric three fold $\mathbb{C}^3$ \cite{BM}. In this section, we will
follow the work of \cite{EMS} and \cite{Ch2} to illustrate that
Bouchard-Mar\~no's approach is implied by theorem 1.1.

Following the work of \cite{EMS}, in subsection 4.1, we illustrate
Bourchard-Mari\~no's proposal of calculation the string amplitude of
$\mathbb{C}^3$( which is also called Bouchard-Mari\~no conjecture
for $\mathbb{C}^{3}$ ). By some residue computations, we obtain a
equivalent description of Bourchard-Mar\~no's approach in subsection
4.2. The calculation in subsections 4.3 and 4.4 to prove the
Bouchard-Mari\~no conjecture for $\mathbb{C}^{3}$ is firstly done by
L. Chen \cite{Ch2}

\subsection{Bouchard-Mari\~no conjecture for $\mathbb{C}^{3}$}
Here we consider the spectral curve
\begin{align}
C: y(1-y)^{\tau}=x
\end{align}
and the variable $t=\frac{1}{1-(\tau+1)y}$.

It is easy to see that, variables $x,y,t$ have the relations that
\begin{align*}
x\frac{d}{dx}=\frac{(1-y)y}{1-(\tau+1)y}\frac{d}{dy}=\left(\frac{t(t-1)(t\tau+1)}{\tau+1}\right)\frac{d}{dt}
\end{align*}
We defined in section 2 that,
\begin{align*}
\hat{\Psi}_{n}(t;\tau)=\left(\left(\frac{t(t-1)(t\tau+1)}{\tau+1}\right)\frac{d}{dt}
\right)^{n}\frac{t-1}{\tau+1}
\end{align*}
for $n\geq 0$.

In the following state, we need to introduce the differential,
\begin{align*}
\Psi_{n}(t;\tau)=d\hat{\Psi}(t;\tau)
\end{align*}
which can be formulated in variables $x,y$ as
\begin{align}
\Psi_{n}(y;\tau)=dy\frac{1-(\tau+1)y}{y(1-y)}\left(\frac{y(1-y)}{1-(\tau+1)y}\frac{d}{dy}\right)^{n+1}\frac{1}{(\tau+1)(1-(\tau+1)y)}
\end{align}
\begin{align}
\Psi_{n}(x;\tau)=\frac{1}{\tau}\sum_{k\geq
1}\frac{\prod_{a=0}^{k-1}(k\tau+a)}{k!}k^{n+1}x^{k-1}dx
\end{align}
respectively.

In \cite{BM}, in order to formulate their conjecture, Bourchard and
Mari\~no introduce
\begin{align}
W_{g}(x_{1},..,x_{l};\tau)=\sum_{\mu,l(\mu)=l}z_{\mu}W_{g,\mu}(\tau)\frac{1}{|Aut(\mu)|}\sum_{\sigma\in
S_{l}}\prod_{i=1}^{l}x_{\sigma(i)}^{\mu_{i}-1}
\end{align}
where
$W_{g,\mu}(\tau)=-\frac{1}{\sqrt{-1}^{g+l}}\mathcal{C}_{g,\mu}(\tau)$.

By $(3)$,$(19)$, $(20)$ and $(21)$, we have
\begin{align*}
W_{g}(x_{1},..,x_{l};\tau)dx_{1}\cdots
dx_{l}=(-1)^{g+l}(\tau^2+\tau)^{l-1}\sum_{b_{L}\geq
0}\langle\tau_{b_{L}}\Gamma_{g}(\tau)\rangle_{g}\Psi_{b_{L}}(y_{L};\tau)
\end{align*}
\begin{definition}
Let us call the symmetric polynomial differential form
\begin{align*}
d^{\otimes
l}\hat{\mathcal{C}}_{g,l}(t_{1},..,t_{l};\tau)=-(\tau(\tau+1))^{l-1}\sum_{b_{L}\geq
0}\langle\tau_{b_{L}}\Gamma_{g}(\tau)\rangle_{g}\Psi_{b_L}(t_L;\tau)
\end{align*}
on $C^{l}$ the Mari\~no-Vafa differential of type $(g,l)$.
\end{definition}
Thus
\begin{align*}
W_{g}(x_{1},..,x_{l};\tau)dx_{1}\cdots dx_{l}=(-1)^{g+l-1}d^{\otimes
l}\hat{\mathcal{C}}_{g,l}(t_{1},..,t_{l};\tau)
\end{align*}

$t=\frac{1}{1-(\tau+1)y}$, it then follows that
$y=\frac{t-1}{(\tau+1)t}$. Substituting to the spectral curve
$(18)$, we have $
\frac{t-1}{t}\left(\frac{t\tau+1}{t}\right)^{\tau}\frac{1}{(\tau+1)^{\tau+1}}=x
$. Let $s(t)$ be the solution of function equation
\begin{align}
\frac{t-1}{t}\left(\frac{t\tau+1}{t}\right)^{\tau}=\frac{s(t)-1}{s(t)}\left(\frac{s(t)\tau+1}{s(t)}\right)^{\tau}
\end{align}
From $(22)$, we have
\begin{align}
ln\left(\frac{t\tau+1}{t}\right)-ln\left(\frac{s(t)\tau+1}{s(t)}\right)=\frac{1}{\tau}\left(ln\left(\frac{s(t)-1}{s(t)}\right)
-ln\left(\frac{t-1}{t}\right)\right)
\end{align}

It is clear that the spectral curve $C:y(1-y)^{\tau}=x$ has only one
ramification point of the $x$-projection, which we denote by $\nu$.
It is given by $y(\nu)=\frac{1}{\tau+1}$. Then, we can find two
points $q$ and $\overline{q}$ on the curve such that
$x(q)=x(\overline{q})$ near the ramification point $\nu$. Let us
write
\begin{align*}
y(q)=\frac{1}{\tau+1}-z,\quad y(\overline{q})=\frac{1}{\tau+1}-P(z)
\end{align*}
with
\begin{align*}
P(z)=-z+\mathcal {O}(z^2)
\end{align*}
From functional equation
$y(q)(1-y(q))^{\tau}=y(\overline{q})(1-y(\overline{q}))^{\tau}$, we
can solve exact $P(z)$ as a power series in $z$,
\begin{align*}
P(z)=-z-\frac{2(-1+\tau^{2})}{3\tau}z^{2}-\frac{4(-1+\tau^{2})^{2}}{9\tau^{2}}z^{3}-\frac{2(1+\tau)^3(-22+57\tau-57\tau^{2}+22\tau^3)}
{135\tau^{3}}z^4+\cdots
\end{align*}
Note that $P(z)$ is an involution $P(P(z))=z$.

In terms of the t-coordinate, the involution $P(z)$ corresponds to
$s(t)$ in $(22)$. We have,

\begin{equation} \label{eq:1}
\left\{ \begin{aligned}
         t(q) &=\frac{1}{1-(\tau+1)y(q)}=\frac{1}{\tau+1}\frac{1}{z}=t \\
                  t(\overline{q})&=\frac{1}{1-(\tau+1)y(\overline{q})}=\frac{1}{\tau+1}\frac{1}{P(z)}=s(t)
                          \end{aligned} \right.
                          \end{equation}
It follows that
\begin{align*}
\frac{(\tau+1)dt}{(t^2-t)(t\tau+1)}=\frac{(\tau+1)ds(t)}{s(t)^2(s(t)-1)}=\frac{dx}{x}
\end{align*}
Using the coordinate $t$ of spectral curve $C$, the Bergman kernel
is defined by
\begin{align*}
B(t_{1},t_{2})=\frac{dt_{1}dt_{2}}{(t_{1}-t_{2})}
\end{align*}
Define a 1-form on $C$ by,
\begin{align*}
dE(q,\overline{q};t_{2})=\frac{1}{2}\int_{q}^{\overline{q}}B(\cdot,t_{2})=\frac{1}{2}\left(
\frac{1}{t_{1}-t_{2}}-\frac{1}{s(t_{1})-t_{2}}\right)dt_{2}
\end{align*}
where the integral is taken with respect to the first variable of
$B(t_{1},t_{2})$ along any path from $q$ to $\overline{q}$. The
natural holomorphic symplectic form on $\mathbb{C}\times\mathbb{C}$
is given by $ \Omega=dln(1-y)\wedge ln(x) $.

Again, let us introduce another 1-form on the curve $C$ by,
\begin{align*}
\omega(q,\overline{q})=\int_{q}^{\overline{q}}\Omega(\cdot,x)&=\left(ln(1-y(\overline{q}))-ln(1-y(q))\right)\frac{dx}{x}\\
&=\frac{1}{\tau}(lny(q)-lny(\overline{q}))\frac{dx}{x}\\
&=\frac{1}{\tau}\left(ln\left(1-\frac{1}{t}
\right)-ln\left(1-\frac{1}{s(t)} \right)
\right)\frac{(\tau+1)}{t^2-t}\frac{dt}{(t\tau+1)}
\end{align*}
The kernel operator is defined as the quotient
\begin{align*}
K(t_{1},t_{2};\tau)=\frac{dE(q,\overline{q},t_{2})}{\omega(q,\overline{q})}=\frac{\tau}{2(\tau+1)}
\frac{(s(t_{1})-t_{1})(t_{1}^{2}-t_{1})(t_{1}\tau+1)}
{(t_{1}-t_{2})(s(t_{1})-t_{2})(ln(1-\frac{1}{t_{1}})-ln(1-\frac{1}{s(t_{1})}))}\frac{1}{dt_{1}}dt_{2}
\end{align*}
It is clear that $K(s(t_{1}),t_{2};\tau)=K(t_{1},t_{2};\tau)$.
\begin{definition}
The topological recursion formula is an inductive mechanism of
defining a symmetric $l$-form $W_{g,l}(t_{1},..,t_{l};\tau)$ on
$C^{l}$ for $(g,l)$ subject to $2g-2+l>0$ by

\begin{multline*}
W_{g,l+1}(t_{0},t_{L};\tau)=-\frac{1}{2\pi i}\oint
_{\gamma_{\infty}}\left[
K(t,t_{0};\tau)\left(W_{g-1,l+2}(t,s(t),t_{L};\tau)
+\right.\right.\\
\left.\left.\sum_{i=1}^{l}\left(W_{g,l}(t,t_{L \setminus \{i\}})
B(s(t),t_{i})+W_{g,l}(s(t),t_{L\setminus \{i\}};\tau)
B(t,t_{i})\right)\right.\right.\\
\left.\left.+\sum_{\substack{g_1+g_2=g,\mathcal{I}\cup\mathcal{J}=L}}^{stable}W_{g_1,|\mathcal{I}|+1}(t,t_{\mathcal{I}};\tau)
W_{g_2,|\mathcal{J}|+1}(s(t),t_{\mathcal{J}};\tau)\right) \right]
\end{multline*}
where $t_{I}=(t_{i})_{i\in I}$ for a subset $I\subset L$, and the
last sum is taken over all partitions of $g$ and disjoint
decompositions $I\coprod J=L$ subject to the stability condition
$2g_{i}-1+|I|>0$ and $2g_{2}-1+|J|>0$. The integration is taken with
respect to $dt$ on the contour $\gamma_{\infty}$, which is a
positively oriented loop in the complex $t$-plane of large radius
such that $|t|>max(|t_{0}|,s(t_{0}))$ for $t\in \gamma_{\infty}$.
\end{definition}
\begin{remark}
The original topological recursion formula for
$W_{g,l}(t_1,..,t_{l};\tau)$ was formulated by taken the residue at
$z=0$ \cite{BM}. But here, for simplicity, we have changed the
coordinate from $z$ to $t$ by $t=\frac{1}{(\tau+1)z}$. Hence, we get
the contour integral in definition 4.2.
\end{remark}

Now, we can formulate the Bouchard-Mari\~no conjecture for
$\mathbb{C}^3$.
\begin{conjecture}
For every $g$ and $l$ subject to the stability condition $2g-2+l>0$,
the topological recursion formula with the initial condition

\begin{equation} \label{eq:1}
\left\{ \begin{aligned}
         W_{0,3}(t_{1},t_{2},t_{3};\tau) &=d^{\otimes 3}\hat{\mathcal{C}}_{0,3}(t_{1},t_{2},t_{3};\tau)
         =-\frac{\tau^{2}}{(\tau+1)}dt_{1}dt_{2}dt_{3} \\
                  W_{1,1}(t_{1};\tau)&=-d\hat{\mathcal{C}}_{1,1}(t_{1};\tau)=\frac{1}{24}((1+\tau+\tau^2)\Psi_{0}(t_1;\tau)
                  -\tau(\tau+1)\Psi_{1}(t_1;\tau))
                          \end{aligned} \right.
                          \end{equation}
gives the Mari\~no-Vafa differential with a signature.
\begin{align}
W_{g,l}(t_{1},..,t_{l};\tau)=(-1)^{g+l-1}d^{\otimes
l}\hat{\mathcal{C}}_{g,l}(t_{1},..,t_{l};\tau)
\end{align}

\end{conjecture}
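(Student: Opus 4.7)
The plan is to prove Conjecture 4.4 by induction on $2g-2+l$, following the two-step reduction signaled in the introduction: first, show that the topological recursion of Definition 4.2 is equivalent to the Hodge integral identity (2); second, show that identity (2) is a consequence of the master identity (1) established in Theorem 3.4. The base cases $(g,l)=(0,3)$ and $(1,1)$ are covered by the explicit initial conditions in (25) and need only be checked by direct substitution.

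For the equivalence with (2), I would evaluate the contour integral defining $W_{g,l+1}(t_0,t_L;\tau)$ by residues at the finite poles inside $\gamma_\infty$. The kernel $K(t,t_0;\tau)$ contributes poles at $t=t_0$ and $t=s^{-1}(t_0)$ (coming from the factors $(t-t_0)$ and $(s(t)-t_0)$ in its denominator), while the ramification point does not contribute because of the specific form of the logarithmic denominator $\ln(1-1/t)-\ln(1-1/s(t))$. Using the invariance $K(s(t),t_0;\tau)=K(t,t_0;\tau)$ together with the $t\leftrightarrow s(t)$ symmetry of the integrand, the two residues pair into a single symmetric expression. Substituting the Hodge-integral expansion (4) of $\hat{\mathcal{C}}_{g,l}$ into $W_{g,l}=(-1)^{g+l-1}d^{\otimes l}\hat{\mathcal{C}}_{g,l}$ and expanding the logarithmic denominator as a power series via the defining relation (22) of $s(t)$, I would read off the coefficient of each $\hat{\Psi}_{b_i}(t_i;\tau)$; the Bergman term then yields the pairing polynomial $P_a(t_1,t_i;\tau)$, while the quadratic and splitting terms produce $P_{a_1,a_2}(t_1;\tau)$ multiplying $\hat{\Psi}_{a_1+1}(t_1;\tau)\hat{\Psi}_{a_2+1}(t_1;\tau)$, giving exactly the three terms on the right-hand side of (2).

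For the reduction of (2) to (1), I would apply $d_{t_2}\cdots d_{t_l}$ to identity (1) and re-express the $\tau$-derivative terms $\frac{\partial}{\partial\tau}\hat{\Psi}_{b_i}$ and $\frac{d}{d\tau}\langle\tau_{b_L}\Gamma_g(\tau)\rangle$ using the recursion $\hat{\Psi}_{n+1}=(t^2-t)\frac{t\tau+1}{\tau+1}\frac{d}{dt}\hat{\Psi}_n$ and standard $\tau$-differentiation of the triple-$\lambda$ class $\Gamma_g(\tau)$. The cut-and-join information, which is encoded in these $\tau$-derivatives on the left of (1), is precisely what is needed to absorb the off-diagonal kernel $\frac{(t_j-1)(t_i^2\tau+t_i)\hat{\Psi}_{a+1}(t_i;\tau)-(t_i-1)(t_j^2\tau+t_j)\hat{\Psi}_{a+1}(t_j;\tau)}{t_i-t_j}$ appearing on the right of (1) into the polynomial $P_a(t_1,t_i;\tau)$ on the right of (2); similarly, the diagonal $\hat{\Psi}_{a_1+1}(t_i;\tau)\hat{\Psi}_{a_2+1}(t_i;\tau)$ in (1) turns into the kernel $P_{a_1,a_2}(t_1;\tau)$ after applying $d$.

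The principal obstacle is the residue/contour computation in the first step: the involution $s(t)$ is only given implicitly by (22), so controlling the expansion of the logarithmic denominator in $K(t,t_0;\tau)$ well enough to match the explicit polynomial kernels $P_a$ and $P_{a_1,a_2}$ requires careful power-series manipulation. Once the kernels are identified, both the equivalence of the topological recursion with (2) and the deduction of (2) from (1) become an exercise in symmetrisation and bookkeeping of Hodge-integral coefficients, and the induction closes.
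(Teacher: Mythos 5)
Your first step---evaluating the contour integral in Definition 4.2 by deforming $\gamma_\infty$ past the simple poles at $t'=t_0$ and $s(t')=t_0$, discarding an $O(t_0^{-1})$ remainder, and pairing the two residues by the symmetry $K(s(t'),t_0;\tau)=K(t',t_0;\tau)$---is exactly what the paper does in subsection 4.2 (Theorems 4.7 and 4.8), and it correctly reduces the conjecture to the equivalent identity $(2)$/$(30)$. (One small slip: in $(30)$ the kernel $P_{a_1,a_2}(t_1;\tau)$ \emph{replaces} the product $\hat{\Psi}_{a_1+1}(t_1;\tau)\hat{\Psi}_{a_2+1}(t_1;\tau)$ rather than multiplying it.)

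The genuine gap is in your second step, the deduction of $(2)$ from $(1)$. The kernels $P_a(t_1,t_i;\tau)$ and $P_{a_1,a_2}(t_1;\tau)$ are defined in $(28)$--$(29)$ with the non-polynomial denominator $\ln\bigl(1-\tfrac{1}{t}\bigr)-\ln\bigl(1-\tfrac{1}{s(t)}\bigr)$ followed by a truncation $[\,\cdot\,]_+$ to the polynomial part. No amount of applying $d_{t_2}\cdots d_{t_l}$ to the polynomial identity $(1)$ and ``re-expressing the $\tau$-derivatives'' can manufacture such a logarithmic denominator or the truncation operator; the mechanism you describe does not produce the right-hand side of $(2)$. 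What is actually needed (subsections 4.3--4.4 of the paper) is the auxiliary coordinate $v$ with $x=e^{-\frac{1}{2}v^2}$, the pushforward $\pi_*f=f(v)+f(-v)$ onto the two branches $t$ and $s(t)$, the functions $\eta_n(v;\tau)=\tfrac{1}{2}\bigl(\hat{\Psi}_n(t;\tau)-\hat{\Psi}_n(s(t);\tau)\bigr)=\hat{\Psi}_n(t;\tau)+F_n(w;\tau)$ with $F_n$ regular in $w$, and the application of the operator $\bigl(\tfrac{-v_1dv_1}{2\eta_{-1}(v_1;\tau)}\,d_2\cdots d_l\,\pi_*(\cdot)\bigr)_+$ to both sides of $(10)$. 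Moreover, your statement that the $\tau$-derivative terms on the left of $(1)$ are ``absorbed'' into $P_a$ is backwards: under $\pi_*$ those terms become regular in $w_1$ and are annihilated by the polynomial-part extraction; the surviving left-hand contribution is $2\eta_{-1}(v_1;\tau)\eta_{b_1+1}(v_1;\tau)$ coming from the unstable $\ln(1-y_1)$ term, and it is the join term $T_1$ on the right that turns into $P_a$ via the Bergman-kernel identities $(40)$--$(42)$. Without the $\pi_*$/$\eta_{-1}$/$[\,\cdot\,]_+$ machinery your reduction of $(2)$ to $(1)$ does not close.
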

\subsection{Residue calculation}
In this subsection, we calculate the residues(contour integrals)
appearing definition 4.2. In fact, we have to calculate the
following two types of residues:

i)$R_{a,b}(t;\tau)=-\frac{1}{2\pi
i}\oint_{\gamma_{\infty}}K(t',t;\tau)\Psi_{a}(t';\tau)\Psi_{b}(s(t');\tau)
$

ii)$R_{n}(t,t_{i};\tau)=-\frac{1}{2\pi
i}\oint_{\gamma_{\infty}}K(t',t;\tau)\left(\Psi_{n}(t';\tau)B(s(t'),t_{i})+\Psi_{n}(s(t');\tau)B(t',t_{i})
\right)$

We need one lemma first,
\begin{lemma}
In the $z$-coordinate, the kernel $K(t'(z),t;\tau)$ has the
expansion,
\begin{align*}
&K=\left(\frac{\tau^2}{2(\tau+1)^3}z^{-1}-\frac{(\tau-1)}{2(\tau+1)^2}+\frac{3\tau^2t^2+2\tau(1-\tau)t-4\tau}{6(\tau+1)}z\right.\\
&\left.+\left(
\frac{\tau-\tau^2}{6}t^2+\frac{\tau^2-2\tau+1}{9}t+\frac{(\tau+1)}{18}
\right)z^2+\cdots \right)dt\frac{1}{dz}
\end{align*}
In particular, the coefficients of $z$ in above expansion are
polynomials of $t$.
\end{lemma}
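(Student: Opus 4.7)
The plan is to substitute $t' = 1/((\tau+1)z)$ and $s(t') = 1/((\tau+1)P(z))$ into every factor of $K(t',t;\tau)$, simplify using the identity $1-1/t = (\tau+1)y$, and then carry out a formal power-series expansion in $z$. The key observation is that since $y(q) = 1/(\tau+1) - z$ and $y(\bar q) = 1/(\tau+1) - P(z)$, the logarithm difference in the denominator of $K$ becomes
\[
\ln(1-1/t') - \ln(1-1/s(t')) = \ln(1-(\tau+1)z) - \ln(1-(\tau+1)P(z)),
\]
which depends only on $z$ and $\tau$ (not on $t$). This is what makes the polynomiality claim at the end automatic.

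First I would compute each factor of $K$ directly in the $z$-coordinate, obtaining
\[
s(t')-t' = \frac{z-P(z)}{(\tau+1)zP(z)}, \qquad (t'^2-t')(t'\tau+1) = \frac{(1-(\tau+1)z)(\tau+(\tau+1)z)}{(\tau+1)^3 z^3},
\]
\[
(t'-t)(s(t')-t) = \frac{(1-(\tau+1)tz)(1-(\tau+1)tP(z))}{(\tau+1)^2 zP(z)}, \qquad \frac{1}{dt'} = -(\tau+1)z^2 \,\frac{1}{dz}.
\]
After combining and canceling common factors of $(\tau+1)zP(z)$, the kernel reduces to
\[
K = -\frac{\tau}{2(\tau+1)^2}\cdot\frac{(z-P(z))(1-(\tau+1)z)(\tau+(\tau+1)z)}{z\,(1-(\tau+1)tz)(1-(\tau+1)tP(z))\,\bigl[\ln(1-(\tau+1)z)-\ln(1-(\tau+1)P(z))\bigr]}\,dt\,\frac{1}{dz}.
\]

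Next I would expand each piece as a formal power series in $z$: the rational factors $1/(1-(\tau+1)tz)$ and $1/(1-(\tau+1)tP(z))$ become geometric series whose coefficients are polynomials in $t$; the series for $P(z)$ is already supplied to the required order; and the logarithm difference rewrites as
\[
\sum_{n\ge 1}\frac{(\tau+1)^n}{n}\bigl(P(z)^n - z^n\bigr) = (\tau+1)(P(z)-z)\Bigl[1 + \tfrac{(\tau+1)(P(z)+z)}{2} + \cdots\Bigr],
\]
whose leading term is $-2(\tau+1)z$ and which has no $t$-dependence. A quick leading-order check gives $\tfrac{\tau^2}{2(\tau+1)^3}z^{-1}$, matching the lemma; the remaining coefficients then drop out by routine formal series multiplication and one division.

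The polynomiality claim follows for free from the structure: every numerator factor has a $z$-expansion whose coefficients are polynomials in $t$, the denominator log-difference is $t$-independent, and dividing by a series with nonzero leading coefficient preserves polynomiality of the coefficients while only contributing the overall $z^{-1}$. The main obstacle is purely bookkeeping: to pin down the $z^2$ coefficient one must carry $P(z)$ to order $z^4$ (already supplied) and the log-difference expansion to order $z^5$, then collect terms carefully. There is no conceptual difficulty beyond disciplined arithmetic.
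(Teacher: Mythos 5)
Your proposal follows exactly the route of the paper, whose entire proof of this lemma is the single sentence ``substituting $t'=\frac{1}{(\tau+1)z}$ and $s(t')=\frac{1}{(\tau+1)s(z)}$ into $K(t',t;\tau)$ and calculating directly''; your reduction of $K$ to the explicit rational-times-log expression in $z$ is algebraically correct, your leading coefficient $\frac{\tau^2}{2(\tau+1)^3}z^{-1}$ checks out, and your observation that the $t$-dependence sits entirely in the two geometric-series factors while the log-difference depends only on $z$ and $\tau$ is precisely what makes the polynomiality claim immediate. In short, this is the paper's argument carried out with more care than the paper itself supplies.
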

\begin{proof}
Substituting $t'=\frac{1}{(\tau+1)z}$ and
$s(t')=\frac{1}{(\tau+1)s(z)}$ to $K(t',t;\tau)$ and calculate
directly, we obtain lemma 4.5.
\end{proof}
\begin{definition}
For a laurent series $\sum_{n\in \mathbb{Z}}a_{n}t^{n}$, we denote
\begin{align*}
\left[\sum_{n\in \mathbb{Z}}a_{n}t^n\right]_+=\sum_{n>0}a_{n}t^{n}
\end{align*}

\end{definition}
Now, we have two theorems for the calculation of $R_{a,b}(t;\tau)$
and $R_{n}(t,t_{i};\tau)$.

\begin{theorem}
\begin{align*}
&R_{a,b}(t;\tau)=-\frac{1}{2\pi
i}\oint_{\gamma_{\infty}}K(t',t;\tau)\Psi_{a}(t';\tau)\Psi_{b}(s(t');\tau)\\
&=-\frac{\tau}{2}\left(\frac{1}{ln\left(1-\frac{1}{t}\right)-ln\left(1-\frac{1}{s(t)}\right)}
\left(\Psi_{a}(t;\tau)\hat{\Psi}_{b+1}(s(t);\tau)+\hat{\Psi}_{a+1}(s(t);\tau)\Psi_{b}(t;\tau)\right)
\right)_{+}
\end{align*}
\end{theorem}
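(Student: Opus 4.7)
The plan is to evaluate the contour integral by residues. Inside the large loop $\gamma_\infty$ the integrand $K(t',t;\tau)\Psi_a(t';\tau)\Psi_b(s(t');\tau)$ has only two finite poles, both coming from the kernel itself: a simple pole at $t'=t$ from the factor $(t'-t)^{-1}$, and a simple pole at $t'=s(t)$ from the factor $(s(t')-t)^{-1}$ (here using that $s$ is an involution, so $s(t')=t$ iff $t'=s(t)$). The differentials $\Psi_a(t')$ and $\Psi_b(s(t'))$ are polynomial-type and thus regular at these two points, and the factor $(s(t')-t')$ in the numerator of $K$ together with Lemma 4.5 guarantees that the ramification point (at $t'=\infty$ in the $t$-coordinate) does not contribute a hidden finite pole.

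Before computing, I would record two algebraic identities that are essential for simplification. First, the recursion $\hat\Psi_{n+1}(u;\tau)=\frac{u(u-1)(u\tau+1)}{\tau+1}\frac{d}{du}\hat\Psi_n(u;\tau)$ rewrites $\Psi_n(u;\tau)=\frac{(\tau+1)\hat\Psi_{n+1}(u;\tau)}{u(u-1)(u\tau+1)}\,du$. Second, the equality of $dx/x$ on both sheets of the curve yields $\frac{(\tau+1)dt}{(t^2-t)(t\tau+1)}=\frac{(\tau+1)ds(t)}{s(t)(s(t)-1)(s(t)\tau+1)}$, from which the pullback of $\Psi_b$ evaluated at $s(t')$ becomes $\frac{(\tau+1)\hat\Psi_{b+1}(s(t'))}{(t'^2-t')(t'\tau+1)}\,dt'$. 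Together, these identities let the products $(t^2-t)(t\tau+1)\cdot p_b(s(t))\cdot s'(t)$ collapse to $(\tau+1)\hat\Psi_{b+1}(s(t))$, producing the clean combinations appearing on the right-hand side.

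With these in hand I would carry out the two residue computations. At $t'=t$ the coefficient of $(t'-t)^{-1}$ in $K$ is $\frac{\tau(t^2-t)(t\tau+1)}{2(\tau+1)L(t)}$ with $L(t)=\ln(1-1/t)-\ln(1-1/s(t))$, and the identities above collapse the residue contribution to $\frac{\tau\Psi_a(t;\tau)\hat\Psi_{b+1}(s(t);\tau)}{2L(t)}$. At $t'=s(t)$ I would Taylor-expand $s(t')-t=s'(s(t))(t'-s(t))+O((t'-s(t))^2)$, use the involution relations $s(s(t))=t$ and $s'(s(t))=1/s'(t)$, and exploit the antisymmetry $L(s(t))=-L(t)$; the two Jacobian factors cancel and the symmetric partner $\frac{\tau\hat\Psi_{a+1}(s(t);\tau)\Psi_b(t;\tau)}{2L(t)}$ emerges. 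Summing the two residues and multiplying by $-1$ (from $-\frac{1}{2\pi i}\oint_{\gamma_\infty}$) yields the bracketed expression.

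The delicate step — and the main obstacle — is justifying the projection $[\,\cdot\,]_+$. The residues above produce the full rational expression $-\frac{\tau}{2L(t)}\bigl(\Psi_a(t)\hat\Psi_{b+1}(s(t))+\hat\Psi_{a+1}(s(t))\Psi_b(t)\bigr)$, while the stated formula isolates only its positive-power part in $t$. I would reconcile this by expanding in Laurent series about $t=\infty$ (equivalently, in the $z$-coordinate of Lemma 4.5) and observing that $R_{a,b}(t;\tau)$ is a polynomial differential in $t$ by construction, since the contour integral of a meromorphic form with only the two identified finite poles must produce a polynomial output. Hence the non-positive powers of $\frac{1}{L(t)}(\cdots)$ necessarily cancel when one combines them with the expansions of $s(t)$ and $L(t)$ at infinity, leaving exactly the $[\,\cdot\,]_+$ projection on the right. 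Carefully tracking these asymptotics, using $s(t)=-t+O(1)$ and $L(t)\sim -2/t$, is the step that requires most care.
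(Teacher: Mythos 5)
Your residue computations at $t'=t$ and $t'=s(t)$, together with the algebraic identities you invoke to simplify them (the recursion relating $\Psi_n$ and $\hat{\Psi}_{n+1}$, the equality of $\frac{dx}{x}$ on the two sheets, $s(s(t))=t$, $s'(s(t))=1/s'(t)$, and the antisymmetry $L(s(t))=-L(t)$), agree with the paper's evaluation and are correct as far as they go. The genuine gap is in your description of the pole structure and, consequently, in your justification of the projection $[\,\cdot\,]_+$. The integrand is only defined for $|t'|$ large, since $s(t')$ is a series in $1/t'$ valid near the ramification point; moreover, by Lemma 4.5 the kernel itself is singular at the ramification point (the $z^{-1}$ term), and $\Psi_a(t')\Psi_b(s(t'))$ contributes further negative powers of $z$ there. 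So the claim that ``inside $\gamma_{\infty}$ the integrand has only two finite poles'' is not one you can make, and the residue theorem does not reduce the integral to the two residues at $t'=t$ and $t'=s(t)$. What is actually true is the paper's decomposition: $\gamma_{\infty}$ can be shrunk to a loop $\gamma_{\epsilon}$ (still of large radius, chosen so that the relevant function is regular on and outside it) at the cost of the two residues in the intervening annulus, and the leftover term $\frac{1}{2\pi i}\oint_{\gamma_{\epsilon}}$ is \emph{not} zero; it is only $O(t^{-1})$, by the bound coming from the factor $\frac{1}{t'-t}-\frac{1}{s(t')-t}$. It is precisely this nonvanishing $O(t^{-1})$ piece that $[\,\cdot\,]_+$ discards. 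Your assertion that the non-positive powers of $-\frac{\tau}{2L(t)}\bigl(\cdots\bigr)$ ``necessarily cancel'' is therefore false: if they cancelled, the projection would be redundant and the full rational expression would already be polynomial, which it is not.

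The same misunderstanding undermines your polynomiality argument. Saying that ``the contour integral of a meromorphic form with only the two identified finite poles must produce a polynomial output'' is circular and rests on the incorrect pole count. The paper establishes polynomiality of $R_{a,b}(t;\tau)$ independently and \emph{before} any contour deformation: in the $z$-coordinate, $R_{a,b}$ is the coefficient of $z^{-1}$ in the Laurent expansion of $K(t',t;\tau)\Psi_a(t';\tau)\Psi_b(s(t');\tau)$; Lemma 4.5 shows the coefficients of the $z$-expansion of $K$ are polynomials in $t$, while $\Psi_a(t')$ and $\Psi_b(s(t'))$ contribute a Laurent polynomial in $z$ with coefficients independent of $t$, so the $z^{-1}$ coefficient of the product is a polynomial in $t$. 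Both this polynomiality statement and the $O(t^{-1})$ estimate on $\oint_{\gamma_{\epsilon}}$ are needed to pass from the two-residue sum to its $[\,\cdot\,]_+$ part, and both are missing from your argument as written.
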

\begin{proof}
We note that, in term of the original $z$-coordinate of \cite{BM},
the residue $R_{a,b}(t;\tau)$ is the coefficient of $z^{-1}$ in
$K(t',t;\tau)\Psi_{a}(t';\tau)\Psi_{b}(s(t');\tau)$, after expanding
it in the Laurent series in z. $\Psi_{n}(t';\tau)$ is a polynomial
in $t'=\frac{1}{(\tau+1)z}$, thus the contribution to the $z^{-1}$
term in the expression is a polynomial in $t$ by lemma 4.5. Hence
$R_{a,b}(t;\tau)$ is a polynomial in $t$.

Let us write $\Psi_{n}(t';\tau)=f_{n}(t';\tau)dt'$. Let
$\gamma_{\epsilon}$ be a positively oriented loop, such that
function
\begin{align}
\frac{(t'^{2}-t')(t'\tau)+1}{ln\left(1-\frac{1}{t'}\right)-ln\left(1-\frac{1}{s(t')}\right)}s'(t')
f_{a}(t';\tau)f_{b}(s(t');\tau)
\end{align}
has no singularity on and outside loop $\gamma_{\epsilon}$. On the
compact set $\gamma_{\epsilon}$ we have a bound $M_{\epsilon}$,
\begin{align*}
\mid\frac{(t'^{2}-t')(t'\tau)+1}{ln\left(1-\frac{1}{t'}\right)-ln\left(1-\frac{1}{s(t')}\right)}s'(t')
f_{a}(t';\tau)f_{b}(s(t');\tau)\mid<M_{\epsilon}
\end{align*}
Choose $|t|>>1$. Then,
\begin{align*}
&\mid\frac{1}{2\pi
i}\oint_{\gamma_{\epsilon}}K(t',t;\tau)\Psi(t';\tau)\Psi(s(t');\tau)\mid\\
&=\mid\frac{1}{2\pi
i}\oint_{\gamma_{\epsilon}}\frac{\tau}{2(\tau+1)}\left(\frac{1}{t'-t}-\frac{1}{s(t')-t}\right)
\frac{(t'^{2}-t')(t'\tau+1)}{ln\left(1-\frac{1}{t'}\right)-ln\left(1-\frac{1}{s(t')}\right)}s'(t')
f_{a}(t';\tau)f_{b}(s(t');\tau) dt'\mid dt\\
&<\frac{\tau M_{\epsilon}}{4\pi
(\tau+1)}\oint_{\gamma_{\epsilon}}\mid\frac{1}{t'-t}-\frac{1}{s(t')-t}
\mid dt'dt \sim \frac{\tau M_{\epsilon}}{4\pi (\tau+1)|t|}dt
\end{align*}
Thus, we have
\begin{align*}
\frac{1}{2\pi
i}\oint_{\gamma_{\infty}}K(t',t;\tau)\Psi_{a}(t';\tau)\Psi_{b}(s(t');\tau)&=\frac{1}{2\pi
i}\oint_{\gamma_{\infty}-\gamma_{\epsilon}}K(t',t;\tau)\Psi_{a}(t';\tau)\Psi_{b}(s(t');\tau)+O(t^{-1})\\
&=\left[\frac{1}{2\pi
i}\oint_{\gamma_{\infty}-\gamma_{\epsilon}}K(t',t;\tau)\Psi_{a}(t';\tau)\Psi_{b}(s(t');\tau)\right]_{+}
\end{align*}
\begin{align*}
&\frac{1}{2\pi
i}\oint_{\gamma_{\infty}-\gamma_{\epsilon}}K(t',t;\tau)\Psi_{a}(t';\tau)\Psi_{b}(s(t');\tau)\\
&=\frac{\tau}{4(\tau+1)\pi
i}\oint_{\gamma_{\infty}-\gamma_{\epsilon}}\left(\frac{1}{t'-t}-\frac{1}{s(t')-t}\right)
\frac{(t'^2-t')(t'\tau+1)}{ln\left(1-\frac{1}{t'}\right)-ln\left(1-\frac{1}{s(t')}\right)}s'(t')f_{a}(t';\tau)f_{b}(s(t');\tau)dt'dt\\
&=\frac{\tau}{2(\tau+1)}\frac{(t^2-t)(t\tau+1)}{ln\left(1-\frac{1}{t}\right)-ln\left(1-\frac{1}{s(t')}\right)}s'(t')
f_{a}(t;\tau)f_{b}(s(t);\tau)dt\\
&+\frac{\tau}{2(\tau+1)}\frac{(s(t)^{2}-s(t))(s(t)\tau+1)}{ln\left(1-\frac{1}{t}
\right)-ln\left(1-\frac{1}{s(t)}\right)}f_{a}(s(t);\tau)f_{b}(t;\tau)dt\\
&=\frac{\tau}{2(\tau+1)}\frac{(t^2-t)(t\tau+1)}{ln\left(1-\frac{1}{t}
\right)-ln\left(1-\frac{1}{s(t)}
\right)}s'(t)\left(f_{a}(t;\tau)f_{b}(s(t);\tau)+f_{a}(s(t);\tau)f_{b}(t;\tau)\right)dt\\
&=\frac{\tau}{2}\frac{1}{ln\left(1-\frac{1}{t}\right)-ln\left(1-\frac{1}{s(t)}\right)}\left(\Psi_{a}(t;\tau)\hat{\Psi}_{b+1}(s(t);\tau)
+\Psi_{a+1}(s(t);\tau)\hat{\Psi}_{b}(t;\tau)\right)
\end{align*}
\end{proof}

Similarly, with the same proof as showed above, we have
\begin{theorem}
\begin{align*}
R_{n}(t,t_{i};\tau)&=-\frac{1}{2\pi
i}\oint_{\gamma_{\infty}}K(t',t;\tau)\left(\Psi_{n}(t';\tau)B(s(t'),t_{i})+\Psi_{n}(s(t');\tau)B(t';t_{i})\right)\\
&=-\tau\left[\frac{1}{ln\left(1-\frac{1}{t}\right)-ln\left(1-\frac{1}{s(t)}\right)}\left(
\hat{\Psi}_{n+1}(t;\tau)B(s(t),t_{i})+\hat{\Psi}_{n+1}(s(t);\tau)B(t,t_{i})
\right)\right]_{+}
\end{align*}
\end{theorem}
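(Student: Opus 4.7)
The plan is to follow the proof of Theorem 4.7 almost verbatim, replacing the second $\Psi$-factor by a Bergman kernel. The ingredients that carry over are the Laurent expansion of $K(t',t;\tau)$ from Lemma 4.5, the polynomial character of the coefficient of $dt'$ in $\Psi_n(t';\tau)$, the kernel symmetry $K(s(t_1),t_2;\tau)=K(t_1,t_2;\tau)$, the involution property $s\circ s=\mathrm{id}$, and the identity $(t^2-t)(t\tau+1)\,s'(t)=(s(t)^2-s(t))(s(t)\tau+1)$ coming from $dx/x$-invariance on the spectral curve.

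First, as in Theorem 4.7 we verify that $R_n(t,t_i;\tau)$ is polynomial in $t$ up to an $O(1/|t|)$ remainder. Choose a loop $\gamma_\epsilon$ that encloses every singularity of the integrand except the poles $t'=t$ and $t'=s(t)$ produced by $K$; in particular $\gamma_\epsilon$ must enclose the ramification point (where $\ln(1-1/t')-\ln(1-1/s(t'))$ vanishes) and the Bergman poles at $t'=t_i$ and $t'=s(t_i)$. On $\gamma_\epsilon$ the integrand is bounded by some constant $M_\epsilon$, and the same estimate as in Theorem 4.7 shows the integral over $\gamma_\epsilon$ is $O(1/|t|)$ for $|t|$ large. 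Hence $\oint_{\gamma_\infty}=\oint_{\gamma_\infty-\gamma_\epsilon}+O(1/|t|)$, and the $[\cdot]_+$ projection at the end discards the tail.

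Next, we compute the residues inside $\gamma_\infty\setminus\gamma_\epsilon$, which occur only at $t'=t$ and $t'=s(t)$. Writing the integrand as $\omega_1+\omega_2$ with $\omega_1=K\,\Psi_n(t';\tau)\,B(s(t'),t_i)$ and $\omega_2=K\,\Psi_n(s(t');\tau)\,B(t',t_i)$, the defining recursion $\hat{\Psi}_{n+1}(t;\tau)=\frac{(t^2-t)(t\tau+1)}{\tau+1}f_n(t;\tau)$ (with $\Psi_n=f_n\,dt$) combined with the $s'$-identity above pairs the four residues in a symmetric way: $\mathrm{Res}_{t'=t}\,\omega_1$ and $\mathrm{Res}_{t'=s(t)}\,\omega_2$ both collapse to $\frac{\tau}{2}\hat{\Psi}_{n+1}(t;\tau)B(s(t),t_i)/(\ln(1-1/t)-\ln(1-1/s(t)))$, while $\mathrm{Res}_{t'=t}\,\omega_2$ and $\mathrm{Res}_{t'=s(t)}\,\omega_1$ both collapse to the swapped expression with $t$ and $s(t)$ interchanged. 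Summing the four contributions, multiplying by $-1$ from the $-1/(2\pi i)$ prefactor, and applying $[\cdot]_+$ produces exactly the formula in the statement.

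The main obstacle, compared with Theorem 4.7, is precisely the pairing of residues through the $s'(t)$-identity: one must check that the $\omega_2$-contributions, which a priori carry the factors $f_n(s(t))\,s'(t)$ and $(t^2-t)(t\tau+1)$ rather than the symmetric polynomial $(s(t)^2-s(t))(s(t)\tau+1)\,f_n(s(t))$, reassemble into $\hat{\Psi}_{n+1}(s(t);\tau)$ after using $(t^2-t)(t\tau+1)\,s'(t)=(s(t)^2-s(t))(s(t)\tau+1)$. Once this rearrangement is in place, the remaining bookkeeping — the signs, the doubling arising from the symmetry $K(s(t_1),t_2;\tau)=K(t_1,t_2;\tau)$, and the verification that $\gamma_\epsilon$ can be chosen to enclose every non-kernel singularity — is identical to the corresponding step of Theorem 4.7.
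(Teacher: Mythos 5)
Your proposal is correct and follows the same route as the paper, which proves this theorem by simply invoking the argument of Theorem 4.7; your pairing of the residues at $t'=t$ and $t'=s(t)$ via the identity $(t^2-t)(t\tau+1)\,s'(t)=(s(t)^2-s(t))(s(t)\tau+1)$ is exactly the needed step, and it correctly accounts for the prefactor $\tau$ (rather than $\tau/2$) in the statement. One small slip that does not affect the argument: in the $t$-coordinate the ramification point sits at $t'=\infty$ (it is $z=0$), so $\gamma_{\epsilon}$ need not --- indeed cannot --- enclose it; what $\gamma_{\epsilon}$ must enclose are the finite singularities of the non-kernel factor, including the Bergman poles at $t'=t_i$ and $t'=s(t_i)$ as you say.
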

Let us define polynomials $P_{a,b}(t;\tau)$ and $P_{n}(t,t_i;\tau)$
by
\begin{align}
&P_{a,b}(t;\tau)dt=-\frac{\tau(\tau+1)}{2}\left[\frac{1}{ln\left(1-\frac{1}{t}
\right)-ln\left(1-\frac{1}{s(t)}\right)}\left(\hat{\Psi}_{a+1}(t;\tau)\hat{\Psi}_{b+1}(s(t);\tau)
\right.\right.\\\nonumber
&\left.\left.+\hat{\Psi}_{a+1}(s(t);\tau)\hat{\Psi}_{b+1}(t;\tau)\right)\frac{dt}{(t^2-t)(t\tau+1)}\right]_+
\end{align}
\begin{align}
P_{n}(t,t_{i};\tau)dtdt_{i}=-\tau\left[\frac{1}{ln\left(1-\frac{1}{t}\right)-ln\left(1-\frac{1}{s(t)}\right)}\left(
\hat{\Psi}_{n+1}(t;\tau)B(s(t),t_{i})+\hat{\Psi}_{n+1}(s(t);\tau)B(t,t_{i})
\right)\right]_{+}
\end{align}

With the above notations, the Bourchard-Mari\~no conjecture for
$\mathbb{C}^{3}$ is equivalent to
\begin{align}
&(\tau^2+\tau)^{l-1}\sum_{b_L\geq
0}\langle\tau_{b_L}\Gamma_{g}(\tau)\rangle_{g}d\hat{\Psi}_{b_{L}}(t_L;\tau)\\\nonumber
&=-(\tau^2+\tau)^{l-2}\sum_{i=2}^{l}\sum_{a,b_{L\setminus
\{1,i\}}}\langle\tau_{a}\tau_{b_{L\setminus\{1,i\}}}\Gamma_{g}(\tau)
\rangle_{g}P_{a}(t_{1},t_{i};\tau)dt_{1}dt_{i}d\hat{\Psi}_{b_{L\setminus\{1,i\}}}(t_{L\setminus
\{1,i\}};\tau)\\\nonumber &+\sum_{\substack{a_1,a_2\geq 0
\\b_{L\setminus \{1\}}}}(\tau^2+\tau)^{l-1}\left((\tau^2+\tau)\langle\tau_{a_1}\tau_{a_2}\tau_{b_{L\setminus\{1\}}}\Gamma_{g-1}(\tau)\rangle_{g-1}
\right.\\\nonumber
&\left.-\sum_{\substack{g_1+g_2=g\\
\mathcal{I}\bigsqcup\mathcal{J}=L\setminus\{i\}}}^{stable}\langle
\tau_{a_1}\tau_{b_{I}}\Gamma_{g_1}(\tau)\rangle_{g_1,|I|+1}\langle
\tau_{a_2}\tau_{b_{J}}\Gamma_{g_2}(\tau)\rangle_{g_2,|J|+1}\right)
P_{a_1,a_2}(t_1;\tau)dt_1d\hat{\Psi}_{b_L\setminus\{1\}}(t_{L\setminus
\{1\}};\tau)
\end{align}

\subsection{Calculation in new variable $v$}
Now, let us introduce a new variable $v$ by
$x=e^{-w}=e^{-\frac{1}{2}v^2}$. As $y(1-y)^\tau=x$ and
$t=\frac{1}{1-(\tau+1)y}$, we have
\begin{align*}
\left(
\frac{t-1}{(\tau+1)t}\right)\left(1-\frac{t-1}{(\tau+1)t}\right)^{\tau}=e^{-\frac{1}{2}v^2}
\end{align*}
Thus
\begin{align*}
&v^2=-2\left(ln\left(1-\frac{1}{t}\right)+\tau
ln\left(1+\frac{1}{\tau t}\right)+\tau ln
(\tau)-(\tau+1)ln(\tau+1)\right)\\
&=2\left(\sum_{n\geq 2}\frac{1}{n}\left((-1)^n+\frac{1}{\tau^{n-1}}
\right)\frac{1}{t^n}+(\tau+1)ln(\tau+1)-\tau ln(\tau)\right)
\end{align*}

Solving the above functional equation,
\begin{align}
v=\frac{1}{t}\left(\sqrt{\frac{\tau+1}{\tau}}+\sum_{k\geq
1}a_{k}(\tau)\frac{1}{t^k}\right)
\end{align}
and
\begin{align}
-v=\frac{1}{s(t)}\left(\sqrt{\frac{\tau+1}{\tau}}+\sum_{k\geq
1}a_{k}(\tau)\frac{1}{s(t)^k}\right)
\end{align}
Taking the inverse of $(31)$, we have

\begin{align}
\frac{1}{t}=v\left(\sqrt{\frac{\tau}{\tau+1}}+\sum_{k\geq
1}b_{k}(\tau)v^{k}\right)
\end{align}
and
\begin{align}
\frac{1}{s(t)}=(-v)\left(\sqrt{\frac{\tau}{\tau+1}}+\sum_{k\geq
1}b_{k}(\tau)(-v)^{k}\right)
\end{align}

\begin{lemma}
Let
\begin{align*}
&\eta_{-1}(v;\tau)=\frac{1}{2}\left(\hat{\Psi}_{-1}(t;\tau)-\hat{\Psi}_{-1}(s(t);\tau)
\right)\\
&=-\frac{1}{2}\left(ln\left(\frac{t\tau+1}{t\tau}\right)-ln\left(\frac{s(t)\tau+1}{s(t)\tau}\right)\right)\\
&=\frac{1}{2\tau}\left(ln\left(1-\frac{1}{t}\right)-ln\left(1-\frac{1}{s(t)}\right)
\right)
\end{align*}
and define
 $\eta_{n+1}(v;\tau)=-\frac{1}{v}\frac{d}{dv}\eta_{n}(v;\tau)$ when
$n\geq -1$, then
\begin{align*}
\eta_{n}(v;\tau)=\frac{1}{2}\left(\hat{\Psi}_{n}(t;\tau)-\hat{\Psi}_{n}(s(t);\tau)
\right)
\end{align*}
Moreover, there exists formal series $F_{n}(w;\tau)$ such that
$\eta_{n}(v;\tau)=\hat{\Psi}_{n}(t;\tau)+F_{n}(w;\tau)$.
\end{lemma}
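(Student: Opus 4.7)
The statement has two parts: the closed-form evaluation of $\eta_n(v;\tau)$, and the decomposition $\eta_n = \hat{\Psi}_n(t;\tau) + F_n(w;\tau)$. My plan is to establish the first by induction on $n \geq -1$, and the second by a hyperelliptic symmetry argument.

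For the base case $n=-1$, I would verify the three displayed expressions for $\eta_{-1}$ agree. Identifying $\hat{\Psi}_{-1}(t;\tau) = -\ln\!\bigl((t\tau+1)/(t\tau)\bigr)$ gives the first equality; this formula is forced (up to a $\tau$-dependent constant that cancels in the difference) by inverting the recursion $\hat{\Psi}_0 = \tfrac{t(t-1)(t\tau+1)}{\tau+1}\tfrac{d}{dt}\hat{\Psi}_{-1}$, which reduces to a partial-fraction integration of $1/(t(t\tau+1))$. The second equality is a purely algebraic rewriting via the functional equation (23), derived earlier from the defining equation (22) of the involution $s(t)$.

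For the inductive step, the main tool is the operator identity
$$-\frac{1}{v}\frac{d}{dv} \;=\; -\frac{d}{dw} \;=\; x\frac{d}{dx} \;=\; \frac{t(t-1)(t\tau+1)}{\tau+1}\frac{d}{dt}.$$
The first equality comes from $w = w_0 + \tfrac{1}{2}v^2$ (so $dw = v\,dv$), the second from $x = e^{-w}$, and the third from the chain rule established earlier. Applied to $\hat{\Psi}_n(t;\tau)$ this operator produces $\hat{\Psi}_{n+1}(t;\tau)$ by the defining recursion. Applied to $\hat{\Psi}_n(s(t);\tau)$, which I view via $s(v) = t(-v)$ near the ramification point, a direct chain-rule computation using $ds/dv = -v\,s(s-1)(\tau s+1)/(\tau+1)$ produces $\hat{\Psi}_{n+1}(s(t);\tau)$: the factor of $v$ in $ds/dv$ cancels with the $1/v$ in the operator and the remaining signs match. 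By linearity $-\tfrac{1}{v}\tfrac{d}{dv}\eta_n = \tfrac{1}{2}\bigl(\hat{\Psi}_{n+1}(t) - \hat{\Psi}_{n+1}(s(t))\bigr) = \eta_{n+1}$, closing the induction.

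For the ``moreover'' claim, the key observation is that
$$\hat{\Psi}_n(t;\tau) - \eta_n(v;\tau) \;=\; \tfrac{1}{2}\bigl(\hat{\Psi}_n(t;\tau) + \hat{\Psi}_n(s(t);\tau)\bigr)$$
is invariant under the hyperelliptic involution $v \mapsto -v$ that swaps the two sheets. Since $w$ (equivalently $x = e^{-w}$) is the quotient coordinate for this involution near the ramification point, such a symmetric combination descends to a formal series in $w$, giving $F_n(w;\tau) = -\tfrac{1}{2}(\hat{\Psi}_n(t) + \hat{\Psi}_n(s(t)))$. The trickiest part of the whole argument is the sign bookkeeping on the second sheet in the inductive step: one must carefully confirm that the several minus signs from $s(v) = t(-v)$, from evaluating $d/dv$ at $-v$, and from the formula for $dt/dw$ combine to give $\hat{\Psi}_{n+1}(s(t);\tau)$ with exactly the right sign and no spurious factor.
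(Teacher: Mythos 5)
Your proposal is correct and follows essentially the same route as the paper: identify $\hat{\Psi}_{-1}(t;\tau)=-\ln\bigl(\tfrac{t\tau+1}{t\tau}\bigr)$ by inverting the recursion, then propagate via the operator identity $-\tfrac{1}{v}\tfrac{d}{dv}=x\tfrac{d}{dx}=\tfrac{(t\tau+1)(t^2-t)}{\tau+1}\tfrac{d}{dt}=\tfrac{(s(t)\tau+1)(s(t)^2-s(t))}{\tau+1}\tfrac{d}{ds(t)}$, which the paper states as its equation $(35)$. The only (harmless) variation is in the ``moreover'' part, where you invoke the evenness of $\tfrac{1}{2}\bigl(\hat{\Psi}_{n}(t;\tau)+\hat{\Psi}_{n}(s(t);\tau)\bigr)$ under $v\mapsto -v$ for every $n$, whereas the paper expands $F_{-1}$ explicitly as a series in $w$ using its equations $(33)$--$(34)$ and then sets $F_{n}=\bigl(-\tfrac{d}{dw}\bigr)^{n+1}F_{-1}$; the two justifications amount to the same thing.
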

\begin{proof}
Let $\hat{\Psi}_{-1}(t;\tau)=-ln\left(\frac{t\tau+1}{t\tau}\right)$
be the solution of differential equation
\begin{align*}
\frac{(t\tau+1)(t^2-t)}{\tau+1}\frac{d}{dt}\hat{\Psi}_{-1}(t;\tau)=\hat{\Psi}_{0}(t;\tau)=\frac{t-1}{\tau+1}
\end{align*}
then
\begin{align*}
\eta_{-1}(v;\tau)=\frac{1}{2}\left(\hat{\Psi}_{-1}(t;\tau)-\hat{\Psi}_{-1}(s(t);\tau)\right)=-\frac{1}{2}\left(ln\left(
\frac{t\tau+1}{t\tau}\right)-ln\left(\frac{s(t)\tau+1}{s(t)\tau}\right)
\right)
\end{align*}
By definition,
\begin{align}
x\frac{d}{dx}=-\frac{d}{dw}=-\frac{1}{v}\frac{d}{dv}=\frac{(t\tau+1)(t^2-t)}{\tau+1}\frac{d}{dt}
=\frac{(s(t)\tau+1)(s(t)^2-s(t))}{\tau+1}\frac{d}{ds(t)}
\end{align}
Therefore
\begin{align*}
\eta_{n}(v;\tau)=\frac{1}{2}\left(\hat{\Psi}_{n}(t;\tau)-\hat{\Psi}(s(t);\tau)
\right)
\end{align*}
Moreover,
\begin{align}
&\eta_{-1}(v;\tau)-\hat{\Psi}_{-1}(t;\tau)\\\nonumber
&=-\frac{1}{2}\left(\hat{\Psi}_{-1}(t;\tau)+\hat{\Psi}_{-1}(s(t);\tau)
\right)\\\nonumber
&=\frac{1}{2}\left(ln\left(1+\frac{1}{t\tau}\right)+ln\left(1+\frac{1}{s(t)\tau}\right)
\right)\\\nonumber
&=\frac{1}{2}\left(\sum_{n\geq
1}\frac{(-1)^{n-1}}{n}\frac{1}{\tau^{n}}\left(
\frac{1}{t^n}+\frac{1}{s(t)^n}\right) \right)
\end{align}
By $(33)$ and $(34)$, we note that right hand side of $(36)$ is a
series of variable $w=\frac{1}{2}v^2$, we write it as
$F_{-1}(w;\tau)$. $(36)$ is equivalent to
\begin{align*}
\eta_{-1}(v;\tau)=\hat{\Psi}_{-1}(t;\tau)+F_{-1}(w;\tau)
\end{align*}
Let us define
\begin{align}
F_{n}(w;\tau)=\left(-\frac{d}{dw}\right)^{n+1}F_{-1}(w;\tau)
\end{align}
then, we have
\begin{align*}
\eta_{n}(v;\tau)=\hat{\Psi}_{n}(t;\tau)+F_{n}(w;\tau)
\end{align*}
\end{proof}

\begin{corollary}
\begin{align*}
P_{a,b}(t;\tau)dt=-\frac{1}{2}\left(\frac{\eta_{a+1}(v;\tau)\eta_{b+1}(v;\tau)}{\eta_{-1}(v;\tau)}
vdv|_{v=v(t)}\right)_{+}
\end{align*}
\end{corollary}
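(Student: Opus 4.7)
The plan is to substitute the decomposition from Lemma 4.9 into formula $(28)$ and then eliminate the extra $F$-terms using the positive-part projection. First I would use Lemma 4.9 to write $\hat{\Psi}_n(t;\tau) = \eta_n(v;\tau) - F_n(w;\tau)$, and under the involution $v \to -v$ (equivalently $t \leftrightarrow s(t)$), $\hat{\Psi}_n(s(t);\tau) = -\eta_n(v;\tau) - F_n(w;\tau)$. A direct expansion then gives
\begin{align*}
\hat{\Psi}_{a+1}(t;\tau)\hat{\Psi}_{b+1}(s(t);\tau) + \hat{\Psi}_{a+1}(s(t);\tau)\hat{\Psi}_{b+1}(t;\tau) = -2\eta_{a+1}\eta_{b+1} + 2F_{a+1}F_{b+1},
\end{align*}
while the denominator $\ln(1-1/t) - \ln(1-1/s(t))$ in $(28)$ becomes $2\tau\eta_{-1}$ by the very definition of $\eta_{-1}$ in Lemma 4.9.

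Next I would convert the differential $dt/[(t^2-t)(t\tau+1)]$ to the $v$-coordinate via the operator identity $(35)$: applying $-v^{-1}(d/dv) = (\tau+1)^{-1}(t\tau+1)(t^2-t)(d/dt)$ to the identity function $t$ yields $dt/[(t^2-t)(t\tau+1)] = -v\,dv/(\tau+1)$. Plugging the three substitutions into $(28)$ and multiplying out the overall constant $-\tfrac{\tau(\tau+1)}{2} \cdot \tfrac{2}{2\tau\eta_{-1}} \cdot \bigl(-\tfrac{1}{\tau+1}\bigr) = -\tfrac{1}{2\eta_{-1}}$, one obtains
\begin{align*}
P_{a,b}(t;\tau)\,dt = -\tfrac{1}{2}\Bigl[\tfrac{\eta_{a+1}\eta_{b+1}}{\eta_{-1}} v\,dv\Bigr]_{+} + \tfrac{1}{2}\Bigl[\tfrac{F_{a+1}F_{b+1}}{\eta_{-1}} v\,dv\Bigr]_{+},
\end{align*}
so the corollary reduces to proving that the second bracket vanishes.

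The key observation, and the place where care is required, is that the $F$-contribution carries no positive powers of $t$. From $(31)$ we have $v = \alpha/t + O(1/t^2)$ with $\alpha = \sqrt{(\tau+1)/\tau}$, hence $w = v^2/2 = O(1/t^2)$. Since $F_{a+1}(w)F_{b+1}(w)$ is analytic in $w$ at the origin, its substitution in $t$ is a Laurent series with no positive powers, i.e.\ $O(1)$ as $t \to \infty$. Meanwhile $\eta_{-1}(v) = -v/(\alpha\tau) + O(v^3) = -1/(\tau t) + O(1/t^2)$, so $1/\eta_{-1}$ has top power $t^{1}$. Finally, $v\,dv = dw$ and $w = O(1/t^2)$ together give $v(dv/dt) = dw/dt = O(1/t^3)$. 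Multiplying the three growth rates, the coefficient of $dt$ is $O(1) \cdot O(t) \cdot O(1/t^3) = O(1/t^2)$, a Laurent series in $1/t$ with no positive-power term, so $[\,\cdot\,]_+$ annihilates it.

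I expect this last step—tracking the orders precisely and checking that no hidden positive-power contribution survives from higher-order terms—to be the only real obstacle. It is robust in $a,b$ because the bound depends only on the universal leading behaviour of $F_n(w)$, $\eta_{-1}(v)$, and $w(v)$ near $v = 0$, not on the specific indices.
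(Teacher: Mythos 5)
Your proposal is correct and follows essentially the same route as the paper's proof of Corollary 4.10: decompose $\hat{\Psi}_n$ into the odd part $\eta_n$ and the even part $-F_n$ via Lemma 4.9, convert the differential and the logarithmic denominator to the $v$-coordinate using $(35)$ and the identity $\ln(1-\tfrac1t)-\ln(1-\tfrac{1}{s(t)})=2\tau\eta_{-1}$, and discard the $F_{a+1}F_{b+1}$ contribution under $(\,\cdot\,)_+$. The only differences are cosmetic: the paper reaches $-\tfrac{v\,dv}{2\eta_{-1}}\bigl(\eta_{a+1}\eta_{b+1}-F_{a+1}F_{b+1}\bigr)$ by a polarization identity rather than by direct substitution, and it merely asserts the vanishing of the $F$-term that you justify by an order count in $1/t$ (your final displayed formula is correct, though the intermediate constant you quote has a harmless sign slip in the bookkeeping).
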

\begin{proof}
By $(23)$ and $(28)$, we have
\begin{align*}
&\frac{1}{2}\frac{1}{ln\left(1+\frac{1}{\tau
t}\right)-ln\left(1+\frac{1}{\tau
s(t)}\right)}\left(\hat{\Psi}_{a+1}(t;\tau)\hat{\Psi}_{b+1}(s(t);\tau)+\hat{\Psi}_{a+1}(s(t);\tau)\hat{\Psi}_{b+1}(t;\tau)
\right)\frac{(\tau+1)dt}{(t^2-t)(t\tau+1)}\\
&=\frac{1}{2\eta_{-1}(v;\tau)}\frac{(\tau+1)dt}{(t^2-t)(t\tau+1)}\left(\frac{(\hat{\Psi}_{a+1}(t;\tau)-\hat{\Psi}_{a+1}(s(t);\tau))
(\hat{\Psi}_{b+1}(t;\tau)-\hat{\Psi}_{b+1}(s(t);\tau))}{4}\right.\\
&\left.-\frac{(\hat{\Psi}_{a+1}(t;\tau)+\hat{\Psi}_{a+1}(s(t);\tau))
(\hat{\Psi}_{b+1}(t;\tau)+\hat{\Psi}_{b+1}(s(t);\tau))}{4}\right)
\\
&=-\frac{vdv}{2\eta_{-1}(v;\tau)}\left(\eta_{a+1}(v;\tau)\eta_{b+1}(v;\tau)-F_{a+1}(w;\tau)F_{b+1}(w;\tau)
\right)
\end{align*}
Noticing that
\begin{align*}
\left(\frac{F_{a+1}(w;\tau)F_{b+1}(w;\tau)vdv}{\eta_{-1}(v;\tau)}|_{v=v(t)}\right)_{+}=0
\end{align*}
we complete the proof.
\end{proof}

\subsection{Proof of the Bouchard-Mari\~no conjecture for $\mathbb{C}^{3}$}
In the proof of the theorem 3.4, we know that LHS of $(10)$ is equal
to
\begin{align*}
&\frac{d}{d\tau}\hat{\mathcal{C}}_{g,l}(t_{1},t_{2},..,t_{l};\tau)-\sum_{i=1}^{l}\frac{\partial
y_{i}}{\partial \tau}t_{i}^{2}(\tau+1)\frac{\partial}{\partial
t_{i}}\hat{\mathcal{C}}_{g,l}(t_{1},t_{2},..,t_{l};\tau)\\
&=\frac{\partial}{\partial
\tau}\hat{\mathcal{C}}_{g,l}(t_{1},t_{2},..,t_{l};\tau)+\sum_{i=1}^{l}\frac{\partial
v_{i}}{\partial \tau}\frac{\partial}{\partial
v_{i}}\hat{\mathcal{C}}_{g,l}(v_{1},v_{2},..v_{l};\tau)-\sum_{i=1}^{l}\frac{\partial
y_{i}}{\partial \tau}t_{i}^{2}(\tau+1)\frac{\partial}{\partial
t_{i}}\hat{\mathcal{C}}_{g,l}(t_{1},t_{2},..t_{l};\tau)
\end{align*}

Recall the following relationship of $x,y,t,v$.
\begin{align*}
x=e^{-\frac{1}{2}v^2}, y(1-y)^{\tau}=x, t=\frac{1}{1-(\tau+1)y}
\end{align*}

Thus
\begin{align*}
\frac{\partial v_{1}}{\partial \tau}=0, \frac{\partial y}{\partial
\tau}=\frac{y(1-y)ln(1-y)}{y(\tau+1)-1},
\frac{(t^2-t)(t\tau+1)}{\tau+1}\frac{d}{d
t}=-\frac{1}{v}\frac{d}{dv}
\end{align*}

After some simple calculation,
\begin{align*}
\text{LHS of $(10)$}=\frac{\partial}{\partial
\tau}\hat{\mathcal{C}}_{g,l}(t_{1},t_{2},..,t_{l};\tau)+\sum_{i=1}^{l}ln(1-y_{i})\left(
-\frac{1}{v_{i}}\frac{d}{dv_{i}}\right)\hat{\mathcal{C}}_{g,l}(t_{1},t_{2},..,t_{l};\tau)
\end{align*}

The direct image of a function $f(v)$ on $C$ via the projection
$\pi: C\rightarrow \mathbb{C}$ is:
\begin{align*}
\pi_{*} f=f(v)+f(-v)
\end{align*}

Thus,
\begin{align*}
\pi_{*}(\hat{C}_{g,l}(t_{1},t_{2},..,t_{l};\tau))=-(\tau^2+\tau)^{l-1}\sum_{b_{L}\geq
0}\langle\tau_{b_{L}}\Gamma_{g}(\tau)\rangle_{g}\left(\hat{\Psi}_{b_{1}}(t_{1};\tau)+\hat{\Psi}_{b_{1}}(s(t_1);\tau)
\right)\hat{\Psi}_{b\backslash  \{1\}}(t_{L\backslash \{1\}};\tau)
\end{align*}
By the definition of $F_{b}(w;\tau)$,
\begin{align*}
&\pi_{*}\left(\frac{\partial}{\partial
\tau}\hat{C}_{g,l}(t_{1},t_{2},..,t_{l};\tau)
\right)=\frac{\partial}{\partial
\tau}\left(\pi_{*}\hat{\mathcal{C}}_{g,l}(t_{1},t_{2},..,t_{l};\tau)
\right)\\
&=\frac{\partial}{\partial
\tau}\left((\tau^2+\tau)^{l-1}\sum_{b_{L}\geq
0}\langle\tau_{b_{L}}\Gamma_{g}(\tau)\rangle_{g}F_{b_1}(w_1;\tau)\hat{\Psi}_{b_{L\backslash
\{1\} }}(t_{L\backslash \{1\}})\right)
\end{align*}
is regular in $w_{1}$. We will use $\mathcal{O}(w_{1})$ to denote
some function regular in $w_{1}$.

Consider the direct sum
\begin{align*}
\pi_{*}\left(\sum_{i=1}^{l}ln(1-y_{i})\left(-\frac{1}{v_{1}}\frac{d}{dv_{1}}
\right)\hat{\mathcal{C}}_{g,l}(t_{1},t_{2},..,t_{l};\tau)\right)
\end{align*}
when $i=1$,
\begin{align*}
&\pi_{*}\left(ln(1-y_{1})\left(-\frac{1}{v_{1}}\frac{d}{dv_{1}}
\right)\hat{\mathcal{C}}_{g,l}(t_{1},t_{2},..,t_{l};\tau)\right)\\
&=-(\tau^2+\tau)^{l-1}\sum_{b_L\geq
0}\langle\tau_{b_{L}}\Gamma_{g}(\tau)\rangle_{g}\left[ln\left(\frac{t_{1}\tau+1}{(\tau+1)t_{1}}
\right)\left(
-\frac{1}{v_{1}}\frac{d}{dv_1}\right)\left(\eta_{b_1}(v_1;\tau)-F_{b_1}(w_1;\tau)
\right)\right.\\
&\left.+ln\left(\frac{s(t_{1})\tau+1}{(\tau+1)s(t_{1})}
\right)\left(
-\frac{1}{v_{1}}\frac{d}{dv_1}\right)\left(\eta_{b_1}(v_1;\tau)-F_{b_1}(w_1;\tau)
\right) \right]\hat{\Psi}_{b \backslash\{1\}}(t_{b
\backslash\{1\}})\\
&=-(\tau^2+\tau)^{l-1}\sum_{b_L\geq
0}\langle\tau_{b_{L}}\Gamma_{g}(\tau)\rangle_{g}\left[\left(ln\left(\frac{t_{1}\tau+1}{(\tau+1)t_{1}}
\right)-ln\left(\frac{s(t_{1})\tau+1}{(\tau+1)s(t_{1})}
\right)\right)\eta_{b_1}(v_1;\tau)\right.\\
&\left.+\left(ln\left(\frac{t_{1}\tau+1}{(\tau+1)t_{1}}
\right)-ln\left(\frac{s(t_{1})\tau+1}{(\tau+1)s(t_{1})}
\right)\right)F_{b_1}(w_1;\tau) \right]\hat{\Psi}_{b
\backslash\{1\}}(t_{b \backslash\{1\}})\\
&=(\tau^2+\tau)^{l-1}\sum_{b_L\geq
0}\langle\tau_{b_L}\Gamma_{g}(\tau)\rangle_{g}2\eta_{-1}(v_{1};\tau)\eta_{b_{1}+1}(v_1;\tau)\hat{\Psi}_{b_L\backslash
\{1\}}(t_{L\backslash \{1\}};\tau)+\mathcal{O}(w_{1})
\end{align*}

and it is easy to see that
\begin{align*}
\pi_{*}\left(\sum_{i=2}^{l}ln(1-y_{i})\left(-\frac{1}{v_{1}}\frac{d}{dv_{1}}
\right)\hat{\mathcal{C}}_{g,l}(t_{1},t_{2},..,t_{l};\tau)\right)=\mathcal{O}(w_1)
\end{align*}
Thus
\begin{align*}
\pi_{*}(\text{LHS of $(10)$}) =(\tau^2+\tau)^{l-1}\sum_{b_L\geq
0}\langle\tau_{b_L}\Gamma_{g}(\tau)\rangle_{g}2\eta_{-1}(v_{1};\tau)\eta_{b_{1}+1}(v_1;\tau)\hat{\Psi}_{b_L\backslash
\{1\}}(t_{L\backslash \{1\}};\tau)+\mathcal{O}(w_{1})
\end{align*}
\begin{align*}
&(\tau^2+\tau)^{l-1}\sum_{b_L\geq
0}\langle\tau_{b_L}\Gamma_{g}(\tau)\rangle_{g}d\hat{\Psi}_{b_L}(t_{L};\tau)\\
&=(\tau^2+\tau)^{l-1}\sum_{b_L\geq
0}\langle\tau_{b_L}\Gamma_{g}(\tau)\rangle_{g}dv_{1}\frac{d}{dv_1}\left(\hat{\Psi}_{b_1}(t_1;\tau)
\right)d\hat{\Psi}_{b_L\backslash \{1\}}(t_{L\backslash
\{1\}};\tau)\\
&=(-v_{1}dv_{1})d_{2}\cdots d_{l}\left(
(\tau^2+\tau)^{l-1}\sum_{b_L\geq
0}\langle\tau_{b_L}\Gamma_{g}(\tau)\rangle_{g}\left(-\frac{1}{v_1}\frac{d}{dv_1}
\right)\left(\eta_{b_1}(v_1;\tau)-F_{b_1}(w_1;\tau)\right)\right)\hat{\Psi}_{b_L\backslash
\{1\}}(t_{L\backslash \{1\}})\\
&=\frac{-v_1dv_1}{2\eta_{-1}(v_1;\tau)}d_{2}\cdots d_{l}\left(
(\tau^2+\tau)^{l-1}\sum_{b_L\geq
0}\langle\tau_{b_L}\Gamma_{g}(\tau)\rangle_{g}2\eta_{-1}(v_1;\tau)\left(\eta_{b_1+1}(v_1;\tau)
-F_{b_1}(w_1;\tau)\right)\right)\hat{\Psi}_{b_L\backslash
\{1\}}(t_{L\backslash \{1\}})\\
&=\frac{-v_1dv_1}{2\eta_{-1}(v_1;\tau)}d_{2}\cdots
d_{l}\left(\pi_{*}(\text{LHS of
$(10)$})-2\eta_{-1}(v_1;\tau)F_{b_1}(w_1;\tau)(\tau^2+\tau)^{l-1}\sum_{b_L\geq
0}\langle\tau_{b_L}\Gamma_{g}(\tau)\rangle_{g}\hat{\Psi}_{b_L\backslash
\{1\}}(t_{L\backslash \{1\}})\right.\\
&\left.-\mathcal{O}(w_1)\right)
\end{align*}
It is important to note that
\begin{align*}
\left(\frac{-v_1dv_1\mathcal{O}(w_1)}{2\eta_{-1}(v_1;\tau)}|_{v_1=v_1(t_1)}
\right)_{+}=0
\end{align*}
and
\begin{align*}
\left(-v_1dv_1F_{b_1+1}(w_1;\tau)|_{v_1=v_1(t_1)} \right)_{+}=0
\end{align*}
Thus,
\begin{align}
(\tau^2+\tau)^{l-1}\sum_{b_L\geq
0}\langle\tau_{b_{L}}\Gamma_{g}(\tau)\rangle_{g}d\hat{\Psi}_{b_L}(t_L;\tau)=\left(
-\frac{v_1dv_1}{2\eta_{-1}(v_1;\tau)}d_2\cdots
d_l\left(\pi_{*}(\text{LHS of $(10)$}) \right)\right)
\end{align}

Then we need to calculate $\left(
\frac{-v_1dv_1}{2\eta_{-1}(v_1;\tau)}d_2\cdots
d_{l}\pi_{*}(T_1)|_{v_1=v_1(t_1)}\right)_{+}$

Let us write $T_{1}=T_{11}+T_{12}$, where
\begin{align*}
&T_{11}=-\frac{(\tau^2+\tau)^{l-2}}{(\tau+1)}\sum_{2\leq j\leq
l}\sum_{\substack{a\geq 0\\ b_{L\backslash\{1,j\}}\geq 0}}\langle
\tau_a\tau_{b_{L\backslash\{1,j\}}}\Gamma_{g}(\tau)\rangle_{g}\hat{\Psi}_{b_{L\backslash\{1,j\}}}(t_{L\backslash
\{1,j\}};\tau)\\
&\frac{(t_{j}-1)(t_{1}^{2}\tau+t_{1})\hat{\Psi}_{a+1}(t_{1};\tau)-(t_1-1)(t_{j}^2\tau+t_j)\hat{\Psi}_{a+1}(t_j;\tau)}{t_{1}-t_{j}}
\end{align*}
\begin{align*}
&T_{12}=-\frac{(\tau^2+\tau)^{l-2}}{(\tau+1)}\sum_{2\leq i < j\leq
l}\sum_{\substack{a\geq 0\\ b_{L\backslash\{i,j\}}\geq 0}}\langle
\tau_a\tau_{b_{L\backslash\{i,j\}}}\Gamma_{g}(\tau)\rangle_{g}\hat{\Psi}_{b_{L\backslash\{i,j\}}}(t_{L\backslash
\{i,j\}};\tau)\\
&\frac{(t_{j}-1)(t_{i}^{2}\tau+t_{i})\hat{\Psi}_{a+1}(t_{i};\tau)-(t_i-1)(t_{j}^2\tau+t_j)\hat{\Psi}_{a+1}(t_j;\tau)}{t_{i}-t_{j}}
\end{align*}
By the definition,
\begin{align}
\left(\frac{-v_1dv_1}{2\eta_{-1}(v_1;\tau)}\pi_{*}T_{12}|_{v_1=v_1(t_1)}\right)_{+}=0
\end{align}

we have that
\begin{align}
&\frac{-v_1dv_1}{2\eta_{-1}(v_1;\tau)}d_{j}\left(\frac{(t_j-1)(t_1^2\tau+t_1)\hat{\Psi}_{a+1}(t_1;\tau)}{t_1-t_j}\right)\\\nonumber
&=\frac{1}{2\eta_{-1}(v_1;\tau)}d_{j}\left(\frac{(\tau+1)dt_1}{(t_1^2-t_1)(t_1\tau+1)}\frac{(t_j-1)(t_1^2\tau+t_1)}{t_1-t_j}\hat{\Psi}_{a+1}(t_1;\tau)
\right)\\\nonumber
&=\frac{\tau+1}{2\eta_{-1}(v_1;\tau)}\frac{\hat{\Psi}_{a+1}(t_1;\tau)dt_1dt_j}{(t_1-t_j)^2}\\\nonumber
&=\frac{\tau+1}{2\eta_{-1}(v_1;\tau)}\hat{\Psi}_{a+1}(t_1;\tau)B(t_1,t_j;\tau)
\end{align}
Similarly,
\begin{align}
&\frac{-v_1dv_1}{2\eta_{-1}(v_1;\tau)}d_j\left(\frac{(t_j-1)(s(t_1)^2\tau+s(t_1))\hat{\Psi}_{a+1}(s(t_1);\tau)}{s(t_1)-t_j}\right)\\\nonumber
&=\frac{\tau+1}{2\eta_{-1}(v_1;\tau)}\hat{\Psi}_{a+1}(s(t_1);\tau)B(s(t_1),t_j;\tau)
\end{align}
Moreover, by the expansion,
\begin{align*}
\frac{t_1-1}{t_1-t_j}+\frac{s(t_1)-1}{s(t_1)-t_j}=\sum_{k\geq
0}\left(\frac{t_j^k}{t_1^k}+\frac{t_j^k}{t_1^{k+1}}+\frac{t_j^k}{s(t_1)^k}+\frac{t_j^k}{s(t_1)^{k+1}}
\right)
\end{align*}
we have
\begin{align}
\left(\frac{-v_1dv_1}{2\eta_{-1}(v_1;\tau)}\left(\frac{t_1-1}{t_1-t_j}+\frac{s(t_1)-1}{s(t_1)-t_j}\right)|_{v_1=v_1(t_1)}\right)_{+}=0
\end{align}
Therefore
\begin{align}
&\left(\frac{-v_1dv_1}{2\eta_{-1}(v_1;\tau)}d_2\cdots
d_{l}\pi_{*}T_{1}|_{v_1=v_1(t_1)}\right)_{+}\quad \text{by
$(39)$}\\\nonumber
&=\left(\frac{-v_1dv_1}{2\eta_{-1}(v_1;\tau)}d_2\cdots
d_{l}\pi_{*}T_{11}|_{v_1=v_1(t_1)}\right)_{+}\quad \text{by $(40)$,
$(41)$, $(42)$}
\\\nonumber
&=(\tau^2+\tau)^{l-2}\sum_{2\leq j\leq l}\sum_{\substack{a\geq
0\\b_{L\backslash \{1,j\}}
}}\langle\tau_a\tau_{b_{L\backslash\{1,j\}}}\Gamma_{g}(\tau)
\rangle_{g}d\hat{\Psi}_{b_{L\backslash\{1,j\}}}(t_{L\backslash\{1,j\}};\tau)\\\nonumber
&\left(\frac{\hat{\Psi}_{a+1}(t_1;\tau)B(t_1,t_j)+\hat{\Psi}_{a+1}(s(t_1);\tau)B(s(t_1),t_j)}
{2\eta_{-1}(v_1;\tau)}|_{v_1=v_1(t_1)}\right)_{+}\\\nonumber
&=-(\tau^2+\tau)^{l-2}\sum_{2\leq j\leq l}\sum_{\substack{a\geq
0\\b_{L\backslash \{1,j\}}
}}\langle\tau_a\tau_{b_{L\backslash\{1,j\}}}\Gamma_{g}(\tau)
\rangle_{g}d\hat{\Psi}_{b_{L\backslash\{1,j\}}}(t_{L\backslash\{1,j\}};\tau)\\\nonumber
&\left(\frac{\hat{\Psi}_{a+1}(t_1;\tau)B(s(t_1),t_j)+\hat{\Psi}_{a+1}(s(t_1);\tau)B(t_1,t_j)}
{2\eta_{-1}(v_1;\tau)}|_{v_1=v_1(t_1)}\right)_{+}\\\nonumber
&=-(\tau^2+\tau)^{l-2}\sum_{2\leq j\leq l}\sum_{\substack{a\geq
0\\b_{L\backslash\{1,j\}}}}\langle\tau_a\tau_{b_{L\backslash
\{1,j\}}}\Gamma_{g}(\tau)\rangle_{g}P_{a+1}(t_1,t_j;\tau)dt_1dt_jd\hat{\Psi}_{b_{L\backslash\{1,j\}}}(t_{L\backslash
\{1,j\}};\tau)
\end{align}

Finally, we need to calculate $
\left(\frac{-v_1dv_1}{2\eta_{-1}(v_1;\tau)}d_2\cdots d_l
\pi_{*}(T_2+T_3)|_{v_1=v_1(t_1)} \right)_{+}$

Because
\begin{align*}
&\left(\frac{-v_1dv_1}{2\eta_{-1}(v_1;\tau)}\pi_{*}\left(\hat{\Psi}_{a_1+1}(t_1;\tau)\hat{\Psi}_{a_2+1}(t_1;\tau)
\right)|_{v_1=v_1(t_1)} \right)_{+}\\
&=\left(\frac{-v_1dv_1}{2\eta_{-1}(v_1;\tau)}\left(\hat{\Psi}_{a_1+1}(t_1;\tau)\hat{\Psi}_{a_2+1}(t_1;\tau)+
\hat{\Psi}_{a_1+1}(s(t_1);\tau)\hat{\Psi}_{a_2+1}(s(t_1);\tau)
\right)|_{v_1=v_1(t_1)} \right)_{+}\\
&=\left(\frac{-v_1dv_1}{2\eta_{-1}(v_1;\tau)}\left(2\eta_{a_1+1}(v_1;\tau)\eta_{a_2+1}(v_1;\tau)+2F_{a_1+1}(w_1;\tau)
F_{a_2+1}(w_1;\tau) \right)|_{v_1=v_1(t_1)} \right)_{+}\\
&=2P_{a_1,a_2}(t_1;\tau)dt_1
\end{align*}
In the last "=", we have used Corollary 4.10 and
$\left(\frac{F_{a_1+1}(w_1;\tau)F_{a_2+1}(w_1;\tau)v_1dv_1}{\eta_{-1}(v_1;\tau)}
|_{v_{1}=v_1(t_1)}\right)_{+}=0$.

We have
\begin{align}
&\left(\frac{-v_1dv_1}{2\eta_{-1}(v_1;\tau)}d_2\cdots d_l
\pi_{*}(T_2+T_3)|_{v_1=v_1(t_1)}
\right)_{+}=(\tau^2+\tau)^{l-1}\sum_{\substack{a_1\geq 0\\ a_2\geq
0\\ b_{L\backslash\{i\} }\geq
0}}\left((\tau^2+\tau)\langle\tau_{a_1}\tau_{a_2}\tau_{b_{L\backslash
\{i\}}}\Gamma_{g-1}(\tau)\rangle_{g-1}\right.\\\nonumber
&\left.-\sum_{\substack{g_1+g_2=g\\ \mathcal{I}\bigcup
\mathcal{J}=L\backslash\{i\}}}^{stable}\langle\tau_{a_1}\tau_{b_{\mathcal{I}}}\Gamma_{g_1}(\tau)
\rangle_{g_1}\langle\tau_{a_2}\tau_{b_{\mathcal{J}}}\Gamma_{g_2}(\tau)
\rangle_{g_2} \right)P_{a_1,a_2}(t_1;\tau)dt_1d\hat{\Psi}_{b _{L
\backslash \{1\}}}(t_{L\backslash \{1\}};\tau)
\end{align}

Hence, by equations $(38)$, $(43)$, $(44)$ and $(10)$, we have
proved the identity $(30)$, i.e the Bouchard-Marino conjecture for
$\mathbb{C}^3$ case.
\begin{remark}
At the same time as L. Chen published his proof \cite{Ch2}, J. Zhou
also proved the Bouchard-Mari\~no conjecture for $\mathbb{C}^{3}$
\cite{Zhou2}. He formulated this new recursion relation of
conjecture and equation $(1)$ of theorem 1.1 in the coordinate $v$.
Then, he regarded equation $(1)$ as meromorphic functions in $v_1$,
taking the principal parts and only the even powers in $v_1$ would
get the recursion relation for this conjecture. J. Zhou has also
applied his method to formulate and prove this new recursion
relation for topological vertex \cite{Zhou3} based on the work
\cite{LLLZ}.
\end{remark}

\section{Application II: Derivation of Some Hodge integral identities}
In this section, we will use the main theorem 1.1 to obtain some
Hodge integral identities.
\subsection{Preliminary calculations}

 For convenience, let us recall the
formula $(1)$ in theorem 1.1 at first: for $g\geq 1$ and $l\geq 1$,
Then,
\begin{align}
&LHS:=-(\tau^2+\tau)^{l-2}\sum_{b_{L}\geq
0}\left((l-1)(2\tau+1)\langle\tau_{b_L}\Gamma_{g}(\tau)\rangle_{g}+
(\tau^{2}+\tau)\langle\tau_{b_{L}}\frac{d}{d\tau}\Gamma_{g}(\tau)\rangle_{g}\right)
\hat{\Psi}_{b_{L}}(t_{L};\tau)\\\nonumber
&-(\tau^{2}+\tau)^{l-1}\sum_{b_{L}\geq
0}\langle\tau_{b_{L}}\Gamma_{g}(\tau)\rangle_{g}\sum_{i=1}^{l}\left(\frac{\partial}{\partial
\tau}\hat{\Psi}_{b_{i}}(t_{i};\tau)+\frac{1}{t_{i}\tau+1}\hat{\Psi}_{b_{i}+1}(t_{i};\tau)\right)\hat{\Psi}_{b_{L\setminus
\{i\}}}(t_{L\setminus \{i\}};\tau)\\\nonumber &=T_{1} +T_{2}+T_{3}
\end{align}
where
\begin{align*}
&T_{1}:= -\frac{(\tau^2+\tau)^{l-2}}{\tau+1}\sum_{1\leq i<j\leq
l}\sum_{\substack{a\geq 0\\b_{L\setminus\{i,j\} }\geq
0}}\langle\tau_{a}\tau_{b_{L\setminus\{i,j\}}}\Gamma_{g}(\tau)\rangle_{g}
\hat{\Psi}_{b_{L\setminus\{i,j\}
}}(t_{L\setminus\{i,j\}};\tau)\\\nonumber
&\cdot\frac{(t_{j}-1)(t_{i}^2\tau+t_{i})\hat{\Psi}_{a+1}(t_{i};\tau)-(t_{i}-1)(t_{j}^2\tau+t_{j})\hat{\Psi}_{a+1}(t_{j};\tau)}
{t_{i}-t_{j}}
\end{align*}
\begin{align*}
T_{2}:=\frac{(\tau^2+\tau)^{l}}{2}\sum_{i=1}^{l}\sum_{\substack{a_{1}\geq
0\\a_{2}\geq 0\\b_{L\setminus\{i\}}\geq
0}}\langle\tau_{a_{1}}\tau_{a_{2}}\tau_{b_{L\setminus\{i\}}}\Gamma_{g-1}(\tau)\rangle_{g-1}
\prod_{n=1}^{2}\hat{\Psi}_{a_{n}+1}(t_{i};\tau)\hat{\Psi}_{b_{L\setminus\{i\}}}(t_{L\setminus\{i\}};\tau)
\end{align*}
\begin{align*}
T_{3}:=-\frac{(\tau^2+\tau)^{l-1}}{2}\sum_{i=1}^{l}\sum_{\substack{a_{1}\geq
0\\a_{2}\geq 0\\b_{L\setminus\{i\}}}}
\sum_{\substack{g_{1}+g_{2}=g\\ \mathcal{I}\coprod\mathcal{J}=L\setminus\{i\}\\2g_{1}-1+|\mathcal{I}|>0\\
2g_{2}-1+|\mathcal{J}|>0}}\langle\tau_{a_{1}}\tau_{b_{\mathcal{I}}}\Gamma_{g_{1}}(\tau)\rangle_{g_{1}}\langle\tau_{a_{2}}
\tau_{b_{\mathcal{J}}}\Gamma_{g_{2}}(\tau)\rangle_{g_{2}}
\prod_{n=1}^{2}\hat{\Psi}_{a_{n}+1}(t_{i};\tau)\hat{\Psi}_{b_{L\setminus\{i\}}}(t_{L\setminus\{i\}};\tau)
\end{align*}
$$\hat{\Psi}_{n}(t;\tau)=\left(\frac{(t^2-t)(t\tau+1)}{\tau+1}\frac{d}{dt}\right)^n\left(\frac{t-1}{\tau+1}\right), n\geq 0$$
 and
$$\Gamma_{g}(\tau)=\Lambda_{g}^{\vee}(1)\Lambda_{g}^{\vee}(-\tau-1)\Lambda_{g}^{\vee}(\tau)$$
Let us denote the $\tau$ expansion of $\hat{\Psi}_{n}(t;\tau)$ and
$\Gamma_{g}(\tau)$ as follow,
\begin{align}
\hat{\Psi}_{b}(t;\tau)=\sum_{k=0}^{b}\frac{\tau^k}{(\tau+1)^{b+1}}\Psi_{b}^{k}(t)
\end{align}
and
\begin{align}
\Gamma_{g}(\tau)=\sum_{m=0}^{2g}\Lambda_{g}^{\vee}(1)a_{m}(\lambda)\tau^{m}
\end{align}
By the definition of
$\Lambda_{g}^{\vee}(t)=\sum_{j=0}^{g}(-1)^{g-j}\lambda_{g-j}t^{j}$
and Mumford's relation
$\Lambda_{g}^{\vee}(t)\Lambda_{g}^{\vee}(-t)=(-1)^{g}t^{2g}$, we can
compute out the coefficients $a_{m}(\lambda)$ in $(47)$. For
example,
\begin{align}
&a_{2g}(\lambda)=(-1)^{g}, a_{2g-1}(\lambda)=(-1)^{g}g,
\cdots\\\nonumber
&a_{1}(\lambda)=\sum_{m=1}^{g}m\lambda_{g-m}\lambda_{g}-(-1)^g\lambda_{g-1}\Lambda_{g}^{\vee}(-1)\\\nonumber
&a_{0}(\lambda)=(-1)^{g}\lambda_{g}\Lambda_{g}^{\vee}(-1)\\\nonumber
\end{align}

We also need to show the expansion form of $(46)$ and how to
calculate the coefficients $\Psi_{b}^{k}(t)$. By definition
$\hat{\Psi}_{b+1}(t,\tau)=(t^2-t)(\frac{y\tau+1}{\tau+1})\frac{d}{dy}\Psi_{b}(y,\tau)$,
$\hat{\Psi}_{0}(t,\tau)=\frac{t-1}{\tau+1}$, we have the expansion
form $(46)$. Moreover, we have
\begin{align}
\Psi_{b+1}^{k}(t)=(t^{3}-t^2)\frac{d}{dt}\Psi_{b}^{k-1}(t)+(t^2-t)\frac{d}{dt}\Psi_{b}^{k}(t),
k=0,..,b+1
\end{align}
Therefore, through the recursion formula $(49)$, all the
$\Psi_{b}^{k}(t)$ can be calculated.

As an illustration, we calculate some cases which will be used in
the following discussion.

\begin{align}
\Psi_{b+1}^{b+1}(t)=(t^3-t^2)\frac{d}{dt}\Psi_{b}^{b}(t)
\end{align}
\begin{align}
\Psi_{b+1}^{b}(t)=(t^3-t^2)\frac{d}{dt}\Psi_{b}^{b-1}(t)+(t^2-t)\frac{d}{dt}\Psi_{b}^{b}(t)
\end{align}
\begin{align}
\Psi_{b+1}^{1}(t)=(t^3-t^2)\frac{d}{dt}\Psi_{b}^{0}(t)+(t^2-t)\frac{d}{dt}\Psi_{b}^{1}(t)
\end{align}
\begin{align}
\Psi_{b+1}^{0}(t)=(t^2-t)\frac{d}{dt}\Psi_{b}^{0}(t)
\end{align}

It is clear that, the recursion relation for $\Psi_{b}^{b}(t)$ in
$(50)$ is just same for the definition of $\hat{\xi}_{b}(t)$ in
\cite{EMS}, and they share the same initial. thus
\begin{align}
\Psi_{b}^{b}(t)=\hat{\xi}_{b}(t)
\end{align}
By $(50)$, we write $\Psi_{b}^{b}(t)$ as
\begin{align}
\Psi_{b}^{b}(t)=\sum_{i=b+1}^{2b+1}f^{b}(b,i)t^i
\end{align}
then all the $f^{b}(b,i)$ could be calculated from the recursion
$(50)$. But we can explicitly write down $f^{b}(b,2b+1)=(2b-1)!!,
f^{b}(b,b+1)=(-1)^{b}b!$ which are the coefficients used in the
proof of $DVV$ equation and $\lambda_{g}$ integral
respectively\cite{MZ,CLL,GJV}.

We can also let
\begin{align}
\Psi_{b}^{b-1}(t)=\sum_{i=b}^{2b}f^{b-1}(b,i)t^i
\end{align}
then by $(51)$ we have
\begin{align}
f^{b}(b+1,k)=(k-2)f^{b-1}(b,k-2)-(k-1)f^{b-1}(b,k-1)+(k-1)f^{b}(b,k-1)-kf^{b}(b,k)
\end{align}

Thus, all $f^{b}(b+1,k)$ can be calculated from $(57)$. For example,
\begin{align*}
f^{b}(b+1,2b+2)=2bf^{b-1}(b,2b)+(2b+1)f^{b}(b,2b+1)=2bf^{b-1}(b,2b)+(2b+1)!!
\end{align*}
Hence
\begin{align}
f^{b-1}(b,2b)=\sum_{k=0}^{b-1}\frac{(2b-2)!!(2k+1)!!}{(2k)!!}
\end{align}
Similarly,
\begin{align*}
f^{b}(b+1,b+1)=-bf^{b+1}(b,b)-(b+1)f^{b}(b,b+1)=-bf^{b}(b,b)+(-1)^{b+1}(b+1)!
\end{align*}
Thus
\begin{align}
f^{b-1}(b,b)=(-1)^b\frac{(b+1)!}{2}
\end{align}

On the other side, from $(53)$, we can let
\begin{align*}
\Psi_{b}^{0}(t)=\sum_{i=1}^{b+1}f^{0}(b,i)y^i
\end{align*}
Then
\begin{align*}
f^{0}(b+1,i)=(i-1)f^{0}(b,i-1)-if^{0}(b,i)
\end{align*}
Thus,
\begin{align}
f^{0}(b,b+1)=b!
\end{align}

Then, from $(52)$, we can let
\begin{align}
\Psi_{b}^{1}(t)=\sum_{j=2}^{b+2}f^{1}(b,j)t^j
\end{align}
and
\begin{align*}
f^{1}(b+1,k)=(k-2)f^{0}(b,k-2)-(k-1)f^{0}(b,k-1)+(k-1)f^{1}(b,k-1)-kf^{1}(b,k)
\end{align*}
Thus
\begin{align}
f^{1}(b+1,b+3)=(b+1)f^{0}(b,b+1)+(b+2)f^{1}(b,b+2)
\end{align}
by initial value, $f^{1}(1,3)=1$ and $f^{0}(b,b+1)=b!$, we get
\begin{align}
f^{1}(b,b+2)=(b+1)!\sum_{k=2}^{b+1}\frac{1}{k}.
\end{align}

Now, let us substitute $(46)$ and $(47)$ to $(45)$, we have

\begin{align}
&LHS=-\sum_{b_{L}\geq 0}\sum_{0\leq k_{L}\leq
b_{L}}\sum_{m=0}^{2g}\langle\tau_{b_{L}}\Lambda_{g}^{\vee}(1)a_{m}(\lambda)\rangle_{g}\Psi_{b_{L}}^{k_{L}}(t_{L})\\\nonumber
&\times\frac{(l-2+m+|k_{L}|-|b_{L}|)\tau^{|k_{L}|+l+m-1}+(l-1+m+|k_{L}|)\tau^{|k_{L}|+l+m-2}}{(\tau+1)^{|b_{L}|+2}}\\\nonumber
&-\sum_{b_{L}\geq 0}\sum_{0\leq k_{L}\leq
b_{L}}\sum_{m=0}^{2g}\langle\tau_{b_{L}}\Lambda_{g}^{\vee}a_{m}(\lambda)
\rangle_{g}\sum_{i=1}^{l}(t_{i}^2-t_{i})\frac{\partial}{\partial
t_{i}}\Psi_{b_{i}}^{k_{i}}(t_{i})\Psi_{b_{L\setminus
\{i\}}}^{b_{L\setminus \{i\}}}(t_{L\setminus
\{i\}})\frac{\tau^{|b_{L}|+m+l-2}}{(\tau+1)^{|b_{L}|+2}}
\end{align}

\begin{align}
&T_{1}=-\sum_{1\leq i<j\leq l}\sum_{\substack{a\geq 0\\
b_{L\setminus \{i,j\}}\geq 0}}\sum_{m=0}^{2g}\sum_{\substack{0\leq
k\leq a+1\\0\leq k_{L\setminus \{i,j\}} \leq b_{L\setminus
\{i,j\}}}}\langle\tau_{a}\tau_{b_{L\setminus
\{i,j\}}}\Lambda_{g}^{\vee}(1)a_{m}(\lambda)\rangle_{g}\Psi_{b_{L\setminus
\{i,j\}}}^{b_{L\setminus \{i,j\}}}(t_{L\setminus
\{i,j\}})\\\nonumber
&\times\frac{(t_{j}-1)t_{i}^{2}\Psi_{a+1}^{k}(t_{i})-(t_{i}-1)t_{j}^{2}\Psi_{a+1}^{k}(t_{j})}{t_{i}-t_{j}}
\frac{\tau^{|k_{L\setminus
\{i,j\}}|+k+l+m-1}}{(\tau+1)^{|b_{L\setminus
\{i,j\}}|+a+3}}\\\nonumber  &-\sum_{1\leq i<j\leq l}\sum_{\substack{a\geq 0\\
b_{L\setminus \{i,j\}}\geq 0}}\sum_{m=0}^{2g}\sum_{\substack{0\leq
k\leq a+1\\0\leq k_{L\setminus \{i,j\}} \leq b_{L\setminus
\{i,j\}}}}\langle\tau_{a}\tau_{b_{L\setminus
\{i,j\}}}\Lambda_{g}^{\vee}(1)a_{m}(\lambda)\rangle_{g}\Psi_{b_{L\setminus
\{i,j\}}}^{b_{L\setminus \{i,j\}}}(t_{L\setminus
\{i,j\}})\\\nonumber
&\times\frac{(t_{j}-1)t_{i}\Psi_{a+1}^{k}(t_{i})-(t_{i}-1)t_{j}\Psi_{a+1}^{k}(t_{j})}{t_{i}-t_{j}}
\frac{\tau^{|k_{L\setminus
\{i,j\}}|+k+l+m-2}}{(\tau+1)^{|b_{L\setminus \{i,j\}}|+a+3}}
\end{align}
\begin{align}
&T_{2}=\frac{1}{2}\sum_{i=1}^{l}\sum_{\substack{a_{1}\geq
0\\a_{2}\geq 0\\b_{L\setminus \{i\}}\geq
0}}\sum_{m=0}^{2(g-1)}\sum_{\substack{0\leq n_{1}\leq a_{1}+1\\0\leq
n_{2} \leq a_{2}+1\\ 0\leq k_{L\setminus \{i\}} \leq b_{L\setminus
\{i\}}}}\langle\tau_{a_{1}}\tau_{a_{2}}\tau_{b_{L\setminus
\{i,j\}}}\Lambda_{g-1}^{\vee}(1)a_{m}(\lambda)\rangle_{g-1}\\\nonumber
&\times\Psi_{a_{1}+1}^{n_{1}}(t_{i})\Psi_{a_{2}+1}^{n_{2}}(t_{i})\Psi_{b_{L\setminus
\{i\}}}^{b_{L\setminus \{i\}}}(t_{L\setminus
\{i\}})\frac{\tau^{|k_{L\setminus
\{i\}}|+n_{1}+n_{2}+l+m}}{(\tau+1)^{a_{1}+a_{2}+|k_{L\setminus
\{i\}}|+3}}
\end{align}
\begin{align}
&T_{3}=-\frac{1}{2}\sum_{i=1}^{l}\sum_{\substack{a_{1}\geq
0\\a_{2}\geq 0\\b_{L\setminus \{i\}}\geq
0}}\sum_{\substack{g_{1}+g_{2}=g\\\mathcal{I}\cup
\mathcal{J}=L\setminus\{i\}\\2g_{1}-1+|\mathcal{I}|>0
\\2g_{2}-1+|J|>0}}\sum_{\substack{0\leq m_{1}\leq 2g_{1}\\0\leq m_{2}\leq
2g_{2}}}\sum_{\substack{0\leq n_{1} \leq a_{1}+1\\0\leq n_{2}\leq
a_{2}+1\\0\leq k_{L\{i\}} \leq b_{L\setminus
\{i\}}}}\langle\tau_{a_{1}}\tau_{b_{\mathcal{I}}}\Lambda_{g_{1}}^{\vee}(1)a_{m_{1}}(\lambda)
\rangle_{g_{1}}\\\nonumber
&\times\langle\tau_{a_{2}}\tau_{b_{\mathcal{J}}}\Lambda_{g_{2}}^{\vee}(1)a_{m_{2}}(\lambda)
\rangle_{g_{2}}\Psi_{a_{1}+1}^{n_{1}}(t_{i})\Psi_{a_{2}+1}^{n_{2}}(t_{i})\Psi_{b_{L\setminus\{i\}}}^{k_{L\setminus\{i\}}}
(t_{L\setminus
\{i\}})\frac{\tau^{|k_{L\setminus\{i\}}|+n_{1}+n_{2}+l+m_{1}+m_{2}-1}}{(\tau+1)^{a_{1}+a_{2}+|k_{L\setminus
\{i\}}|+4}}
\end{align}

Where we have used the notation $|b_{L}|=\sum_{i=1}^{l}b_{i}$ in
above formulas. We note that $(64)$, $(65)$, $(66)$ and $(67)$ are
all the functions of $\tau$. If we expand them as the series of
$\tau$ at different points, we will get some identities of Hodge
integrals after recollecting the coefficients of $\tau$ at both
sides of $(64)=(65)+(66)+(67)$.
\subsection{Expansion of $\tau$ at $\infty$}
For $b\geq 0$, expanding at $\tau=\infty$,
\begin{align*}
\frac{\tau^{a}}{(\tau+1)^{b}}=\sum_{h\geq
0}\binom{-b}{h}\tau^{a-b-i}
\end{align*}
for example, the term containing $\tau$ in (35),
\begin{align*}
\frac{\tau^{|k_{L}|+l+m-1}}{(\tau+1)^{|b_{L}|+2}}=\sum_{h\geq
0}\binom{-|b_{L}|-2}{h}\tau^{l+m-3+|k_{L}|-|b_{L}|-h}
\end{align*}
After this expansion, it is easy to see that the largest degree of
$\tau$ is $2g+l-3$ in the formulas $(64)$, $(65)$, $(66)$, $(67)$.
If $F(t,\tau)\in Q[t][[\tau]]$, we will use the notation
$[\tau^{k}]F(t,\tau)$ to mean the coefficient of $\tau^k$ in
$F(t,\tau)$. By $a_{2g}=(-1)^{g}$ in $(48)$ , after some
calculations,

\begin{align}
[\tau^{2g+l-3}]LHS=(-1)^{g+1}\sum_{b_{L}\geq
0}\langle\tau_{b_{L}}\Lambda_{g}^{\vee}(1)\rangle\left((2g-2+l)\Psi_{b_{L}}^{b_{L}}(t_{L})+\sum_{i=1}^{l}(t_{i}^{2}-t_{i})
\frac{\partial}{\partial
t}\Psi_{b_{i}}^{b_{i}}(t_{i})\Psi_{b_{L\setminus\{i\}}}^{b_{L\setminus\{i\}}}(t_{L\setminus\{i\}})\right)
\end{align}
\begin{align}
&[\tau^{2g+l-3}]T_{1}=(-1)^{g+1}\sum_{1\leq i<j\leq
l}\sum_{\substack{a\geq 0\\b_{L\setminus \{i,j\}}\geq
0}}\langle\tau_{a}\tau_{b_{L\setminus \{i,j\}}}\Lambda_{g}^{\vee}(1)
\rangle_{g}\Psi_{b_{L\setminus \{i,j\}}}^{b_{L\setminus
\{i,j\}}}(t_{L\setminus \{i,j\}})\\\nonumber
&\frac{(t_{j}-1)t_{i}^{2}\Psi_{a+1}^{a+1}(t_{i})-(t_{i}-1)t_{j}^{2}\Psi_{a+1}^{a+1}(t_{j})}{t_{i}-t_{j}}
\end{align}
\begin{align}
[\tau^{2g+l-3}]T_{2}=(-1)^{g+1}\frac{1}{2}\sum_{i=1}^{l}\sum_{\substack{a_{1}\geq
0\\a_{2}\geq 0\\b_{L \setminus \{i\}}\geq
0}}\langle\tau_{a_{1}}\tau_{a_{2}}\tau_{b_{L\setminus
\{i\}}}\Lambda_{g-1}^{\vee}(1) \rangle_{g-1}
\Psi_{a_{1}+1}^{a_{1}+1}(t_{i})\Psi_{a_{2}+1}^{a_{2}+1}(t_{i})\Psi_{b_{L\setminus\{i\}
}}^{b_{L\setminus\{i\} }}(t_{L\setminus\{i\}})
\end{align}
\begin{align}
[\tau^{2g+l-3}]T_{3}=(-1)^{g+1}\frac{1}{2}\sum_{i=1}^{l}\sum_{\substack{a_{1}\geq
0\\a_{2}\geq 0\\b_{L \setminus
\{i\}}\geq 0}}\sum_{\substack{g_{1}+g_{2}=g\\
\mathcal{I}\cup\mathcal{J}=L\setminus\{i\}}}\langle\tau_{a_{1}}\tau_{b_{\mathcal{I}}}\Lambda_{g_{1}}^{\vee}(1)
\rangle_{g_{1}}\langle\tau_{a_{2}}\tau_{b_{\mathcal{J}}}\Lambda_{g_{2}}^{\vee}(1)\rangle_{g_{2}}\prod_{n=1}^{2}
\Psi_{a_{n}+1}^{a_{n}+1}(t_{i})\Psi_{b_{L\setminus\{i\}
}}^{b_{L\setminus\{i\} }}(t_{L\setminus\{i\}})
\end{align}

As showed in formula $(54)$, $\Psi_{b}^{b}(t)=\hat{\xi}_{b}(t)$, we
have got the following corollary from $(68)=(69)+(70)+(71)$,
\begin{corollary}
\begin{align}
&\sum_{b_{L}\geq
0}\langle\tau_{b_{L}}\Lambda_{g}^{\vee}(1)\rangle\left((2g-2+l)\hat{\xi}_{b_{L}}(t_{L})+\sum_{i=1}^{l}(t_{i}^{2}-t_{i})
\frac{\partial}{\partial
t}\hat{\xi}_{b_{i}}(t_{i})\hat{\xi}_{b_{L\setminus\{i\}}}(t_{L\setminus\{i\}})\right)\\\nonumber
&=\sum_{1\leq i<j\leq l}\sum_{\substack{a\geq 0\\b_{L\setminus
\{i,j\}}\geq 0}}\langle\tau_{a}\tau_{b_{L\setminus
\{i,j\}}}\Lambda_{g}^{\vee}(1) \rangle_{g}\hat{\xi}_{b_{L\setminus
\{i,j\}}}\frac{(t_{j}-1)t_{i}^{2}\hat{\xi}_{a+1}(t_{i})-(t_{i}-1)t_{j}^{2}\hat{\xi}_{a+1}(t_{j})}{t_{i}-t_{j}}\\\nonumber
&+\frac{1}{2}\sum_{i=1}^{l}\sum_{\substack{a_{1}\geq 0\\a_{2}\geq
0\\b_{L \setminus \{i\}}\geq
0}}\left(\langle\tau_{a_{1}}\tau_{a_{2}}\tau_{b_{L\setminus
\{i\}}}\Lambda_{g-1}^{\vee}(1)
\rangle_{g-1}+\sum_{\substack{g_{1}+g_{2}=g\\
\mathcal{I}\cup\mathcal{J}=L\setminus\{i\}}}^{stable}\langle\tau_{a_{1}}\tau_{b_{\mathcal{I}}}\Lambda_{g_{1}}^{\vee}(1)
\rangle_{g_{1}}\langle\tau_{a_{2}}\tau_{b_{\mathcal{J}}}\Lambda_{g_{2}}^{\vee}(1)
\rangle_{g_{2}} \right)\\\nonumber
&\times
\hat{\xi}_{a_{1}+1}(t_{i})\hat{\xi}_{a_{2}+1}(t_{i})\hat{\xi}_{b_{L\setminus\{i\}
}}(t_{L\setminus\{i\}})
\end{align}
\end{corollary}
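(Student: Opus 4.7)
The plan is to extract the coefficient of $\tau^{2g+l-3}$, which is the top power of $\tau$ appearing on either side of equation $(1)$ in Theorem 1.1. First I would substitute the $\tau$-expansions $(46)$ of $\hat{\Psi}_{b}(t;\tau)$ and $(47)$ of $\Gamma_{g}(\tau)$, obtaining precisely the rational expressions $(64)$--$(67)$ on the two sides. Expanding each denominator $(\tau+1)^{N}$ at infinity via $(1+\tau^{-1})^{-N}=\sum_{h\geq 0}\binom{-N}{h}\tau^{-h}$ then allows me to read off $[\tau^{2g+l-3}]$ from each piece.

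The core step is a degree count showing that the top exponent is attained uniquely. In the LHS $(64)$, a generic summand of the first part has $\tau$-degree $|k_{L}|+m+l-3-|b_{L}|-h$ subject to $0\leq m\leq 2g$, $0\leq k_{i}\leq b_{i}$, and $h\geq 0$; the maximum $l+2g-3$ forces $m=2g$, $k_{i}=b_{i}$ for all $i$, and $h=0$, while the second bracketed term has degree at most $l+2g-4$ and is therefore subleading. The same count applied to $T_{1}$ selects $(m,k,k_{L\setminus\{i,j\}})=(2g,a+1,b_{L\setminus\{i,j\}})$; applied to $T_{2}$ (where $\Gamma_{g-1}$ restricts $m$ to $\leq 2g-2$) selects $n_{s}=a_{s}+1$, $m=2g-2$, $k_{L\setminus\{i\}}=b_{L\setminus\{i\}}$; and applied to $T_{3}$ selects $n_{s}=a_{s}+1$, $m_{s}=2g_{s}$, $k_{L\setminus\{i\}}=b_{L\setminus\{i\}}$. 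Using $a_{2g}(\lambda)=(-1)^{g}$ from $(48)$ collapses every Hodge integral $\langle\tau_{\ast}\Gamma_{g}(\tau)\rangle$ to $(-1)^{g}\langle\tau_{\ast}\Lambda_{g}^{\vee}(1)\rangle$ at this level, and all four pieces share a common factor of $(-1)^{g+1}$, which produces exactly the formulas $(68)$--$(71)$.

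Finally, dividing out the overall $(-1)^{g+1}$ from $(68)=(69)+(70)+(71)$ and invoking the identification $\Psi_{b}^{b}(t)=\hat{\xi}_{b}(t)$ from $(54)$ turns the equality into the statement of Corollary 5.1. The main obstacle in executing this plan is purely combinatorial bookkeeping: I must verify carefully that within every piece of $(64)$--$(67)$ no subleading choice of $(m,k_{L},h)$, and no correction coming from $\binom{-N}{h}$ with $h>0$, can reach the top exponent $\tau^{2g+l-3}$ by fortuitous cancellation, and that the surviving coefficient really does match the prefactor $(2g-2+l)$ on the diagonal term as well as the stated stable-decomposition structure in $(70)$ and $(71)$.
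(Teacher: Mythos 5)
Your proposal is correct and follows essentially the same route as the paper's Section 5.2: substitute the expansions $(46)$ and $(47)$ into Theorem 1.1 to get $(64)$--$(67)$, expand at $\tau=\infty$, extract the top coefficient $[\tau^{2g+l-3}]$ using $a_{2g}(\lambda)=(-1)^{g}$ to obtain $(68)$--$(71)$, and conclude via the identification $\Psi_{b}^{b}(t)=\hat{\xi}_{b}(t)$ of $(54)$. The degree-counting argument you supply to show the top exponent is attained only at $m=2g$, $k_{i}=b_{i}$, $h=0$ (and the analogous selections in $T_{1}$, $T_{2}$, $T_{3}$) is exactly the "some calculations" the paper leaves implicit.
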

which is just the main theorem 1.1 showed in paper \cite{EMS}.
Formula $(72)$ has been applied by many people to derive DVV
equation \cite{CLL,MZ}, $\lambda_{g}$-conjecture \cite{GJV,MZ} and
$\lambda_{g-1}$ Hodge integral recursion \cite{Zhu}.

Obviously, in this procedure, we can obtain a series of such
corollaries. For example, calculating out the coefficients of
$\tau^{2g+l-4}$ by  $a_{2g}=(-1)^{g}$, $a_{2g-1}(\lambda)=(-1)^{g}g$
in $(48)$ we have
\begin{align}
&[\tau^{2g+l-4}]LHS\\\nonumber
&=(-1)^{g+1}(2g-3+l)\sum_{b_{L}\geq
0}\langle\tau_{b}\Lambda_{g}^{\vee}(1)\rangle_{g}[(g-1-|b_{L}|)\Psi_{b_{L}}^{b_{L}}(t_{L})+\sum_{i=1}^{l}\Psi_{b_{i}}^{b_{i}-1}(t_{i})
\Psi_{b_{L\setminus \{i\}}}^{b_{L\setminus \{i\}}}(t_{L\setminus
\{i\}})]\\\nonumber &+(-1)^{g+1}\sum_{b_{L}\geq
0}\langle\tau_{b_{L}}\Lambda_{g}^{\vee}(1) \rangle_{g}[
\sum_{i=1}^{l}(t_{i}^2-t_{i})\frac{\partial}{\partial
t_{i}}\left((g-|b_{L}|-2)\Psi_{b_{i}}^{b_{i}}(t_{i})+\Psi_{b_{i}}^{b_{i}-1}(t_{i})\right)(\Psi_{b_{L\setminus
\{i\}}}^{b_{L\setminus \{i\}}}(t_{L\setminus \{i\}})\\\nonumber
&+\sum_{j\neq
i}\Psi_{b_{j}}^{b_{j}-1}(t_{j})\Psi_{b_{L\setminus\{i,j\}}}^{b_{L\setminus\{i,j\}}}(t_{L\setminus\{i,j\}})
]
\end{align}

\begin{align}
&[\tau^{2g+l-4}]T_{1}=(-1)^{g+1}\sum_{1\leq i<j\leq
l}\sum_{\substack{a\geq 0\\b_{L\setminus \{i,j\}}\geq
0}}\langle\tau_{a}\tau_{b_{L\setminus \{i,j\}}}\Lambda_{g}^{\vee}(1)
\rangle_{g}\Psi_{b_{L\setminus \{i,j\}}}^{b_{L\setminus
\{i,j\}}}(t_{L\setminus\{i,j\}})\\\nonumber
&(\frac{(t_{j}-1)t_{i}^{2}[\Psi_{a+1}^{a}(t_{i})+(g-a-|b_{L\setminus
\{i,j\}}|-3+\frac{1}{t_{i}})\Psi_{a+1}^{a+1}(t_{i})]}{t_{i}-t_{j}}\\\nonumber
&-\frac{(t_{i}-1)t_{j}^{2}[\Psi_{a+1}^{a}(t_{j})+(g-a-|b_{L\setminus
\{i,j\}}|-3+\frac{1}{t_{j}})\Psi_{a+1}^{a+1}(t_{j})]}{t_{i}-t_{j}})
\end{align}
\begin{align}
&[\tau^{2g+l-4}]T_{2}=(-1)^{g+1}\frac{1}{2}\sum_{i=1}^{l}\sum_{\substack{a_{1}\geq
0\\a_{2}\geq 0\\b_{L \setminus \{i\}}\geq
0}}\langle\tau_{a_{1}}\tau_{a_{2}}\tau_{b_{L\setminus
\{i\}}}\Lambda_{g-1}^{\vee}(1)
\rangle_{g-1}[(g-a_{1}-a_{2}-|b_{L\setminus\{i,j\}}|-4)\\\nonumber
&\cdot\Psi_{a_{1}+1}^{a_{1}+1}(t_{i})\Psi_{a_{2}+1}^{a_{2}+1}(t_{i})
+\Psi_{a_{1}}^{a_{1}+1}(t_{i})\Psi_{a_{2}+1}^{a_{2}+1}(t_{i})+\Psi_{a_{1}+1}^{a_{1}+1}(t_{i})
\Psi_{a_{2}+1}^{a_{2}}(t_{i})]\Psi_{b_{L\setminus\{i\}
}}^{b_{L\setminus\{i\} }}(t_{L\setminus\{i\}})\\\nonumber
&+(-1)^{g+1}\frac{1}{2}\sum_{i=1}^{l}\sum_{\substack{a_{1}\geq
0\\a_{2}\geq 0\\b_{L \setminus \{i\}}\geq
0}}\langle\tau_{a_{1}}\tau_{a_{2}}\tau_{b_{L\setminus
\{i\}}}\Lambda_{g-1}^{\vee}(1)
\rangle_{g-1}\Psi_{a_{1}}^{a_{1}+1}(t_{i})\Psi_{a_{2}+1}^{a_{2}+1}(t_{i})\sum_{j\neq
i}\Psi_{b_{j}}^{b_{j}-1}(t_{j})\Psi_{b_{L\setminus\{i,j\}}}^{b_{L\setminus\{i,j\}}}(t_{L\setminus\{i,j\}})
\end{align}
\begin{align}
&[\tau^{2g+l-4}]T_{3}\\\nonumber
&=(-1)^{g+1}\frac{1}{2}\sum_{i=1}^{l}\sum_{\substack{a_{1}\geq
0\\a_{2}\geq 0\\b_{L \setminus
\{i\}}\geq 0}}\sum_{\substack{g_{1}+g_{2}=g\\
\mathcal{I}\cup\mathcal{J}=L\setminus\{i\}}}^{stable}\langle\tau_{a_{1}}\tau_{b_{\mathcal{I}}}\Lambda_{g_{1}}^{\vee}(1)
\rangle_{g_{1}}\langle\tau_{a_{2}}\tau_{b_{\mathcal{J}}}\Lambda_{g_{2}}^{\vee}(1)\rangle_{g_{2}}
[(g-a_{1}-a_{2}-|b_{L\setminus\{i,j\}}|-4)\\\nonumber
&\cdot\Psi_{a_{1}+1}^{a_{1}+1}(t_{i})\Psi_{a_{2}+1}^{a_{2}+1}(t_{i})
+\Psi_{a_{1}}^{a_{1}+1}(t_{i})\Psi_{a_{2}+1}^{a_{2}+1}(t_{i})+\Psi_{a_{1}+1}^{a_{1}+1}(t_{i})
\Psi_{a_{2}+1}^{a_{2}}(t_{i})]\Psi_{b_{L\setminus\{i\}
}}^{b_{L\setminus\{i\} }}(t_{L\setminus\{i\}})\\\nonumber
&+(-1)^{g+1}\frac{1}{2}\sum_{i=1}^{l}\sum_{\substack{a_{1}\geq
0\\a_{2}\geq 0\\b_{L \setminus
\{i\}}\geq 0}}\sum_{\substack{g_{1}+g_{2}=g\\
\mathcal{I}\cup\mathcal{J}=L\setminus\{i\}}}^{stable}\langle\tau_{a_{1}}\tau_{b_{\mathcal{I}}}\Lambda_{g_{1}}^{\vee}(1)
\rangle_{g_{1}}\\\nonumber
&\langle\tau_{a_{2}}\tau_{b_{\mathcal{J}}}\Lambda_{g_{2}}^{\vee}(1)\rangle_{g_{2}}
\Psi_{a_{1}}^{a_{1}+1}(t_{i})\Psi_{a_{2}+1}^{a_{2}+1}(t_{i})\sum_{j\neq
i}\Psi_{b_{j}}^{b_{j}-1}(t_{j})\Psi_{b_{L\setminus\{i,j\}}}^{b_{L\setminus\{i,j\}}}(t_{L\setminus\{i,j\}})
\end{align}

Thus, by $(73)=(74)+(75)+(76)$, we get a Hodge integral identity.
However, unfortunately, the only Hodge integral contained in this
identity is $\langle\tau_{b_{L}}\Lambda_{g}^{\vee}(1)\rangle_{g}$
because only the terms $a_{2g}(\lambda)=(-1)^g$ and
$a_{2g-1}(\lambda)=(-1)^{g}g$ was involved in the above calculation
which contains no more new information.

In order to get the Hodge integral identity with more than one
$\lambda$-class, we need to compare the coefficients of
$\tau^{2g+l-5}$ with the same procedure, we will arrive at a Hodge
integral identity which involves
$\langle\tau_{b_{L}}\Lambda_{g}^{\vee}(1)\rangle_{g}$ and
$\langle\tau_{b_{L}}\Lambda_{g}^{\vee}(1)\lambda_{1}\rangle_{g}$,
because
$a_{2g-2}(\lambda)=(-1)^{g}\left(\frac{g(g-1)}{2}-\lambda_{1}\right)$.

From such Hodge integral identity, we can calculate all the Hodge
integral of type $\langle\tau_{b_{L}} \lambda_{k}\lambda_{1}
\rangle_{g}$, $k=1,..,g$. However, more terms will appear in the
calculation of coefficients of $\tau^{2g+l-5}$ which make the
computation very complicated. so far, we have not written down the
explicit formula to calculate the Hodge integral of type
$\langle\tau_{b_{L}} \lambda_{k}\lambda_{1} \rangle_{g}$. But when
$k=g$, in the following subsection 5.4, we will give an explicit
formula for $\langle\tau_{b_{L}}\lambda_{g}\lambda_{1}\rangle_{g}$.

\subsection{Expansion of $\tau$ at 0}
Now, let us consider the expansion of
$\frac{\tau^{a}}{(\tau+1)^{b}}$ at $\tau=0$, we have
\begin{align*}
\frac{\tau^{a}}{(\tau+1)^{b}}=\sum_{i\geq 0}\binom{-b}{i}\tau^{a+i}
\end{align*}

After this expansion, it is easy to show that the lowest degree of
$\tau$ in $(64)$ is $l-2$. In fact, by
$a_{0}(\lambda)=(-1)^{g}\lambda_{g}\Lambda_{g}^{\vee}(-1)$ show in
$(48)$ and $\Lambda_{g}^{\vee}(1)\Lambda_{g}^{\vee}(-1)=(-1)^{g}$,
we have
\begin{align}
[\tau^{l-2}]LHS=-(l-1)\sum_{b_{L}\geq
0}\langle\tau_{b_{L}}\lambda_{g}\rangle_{g}\Psi_{b_{L}}^{0}(t_{L})
\end{align}
\begin{align}
[\tau^{l-2}]T_{1}=-\sum_{1\leq i<j \leq l}\sum_{\substack{a\geq 0\\
b_{L\setminus\{i,j\}}\geq 0}}\langle\tau_{a}\tau_{b_{L\setminus
\{i,j\}}}\lambda_{g}\rangle_{g}\Psi_{b_{L\setminus
\{i,j\}}}^{0}(t_{L\setminus\{i,j\}})\frac{(t_{j}-1)t_{i}^{2}\Psi_{a+1}^{0}(t_{i})-(t_{i}-1)t_{j}^{2}\Psi_{a+1}^{0}(t_{j})}
{t_{i}-t_{j}}
\end{align}
\begin{align}
[\tau^{l-2}]T_{2}=[\tau^{l-2}]T_{3}=0
\end{align}
Thus, we get
\begin{corollary}
\begin{align}
&\sum_{b_{L}\geq
0}\langle\tau_{b_{L}}\lambda_{g}\rangle_{g}\Psi_{b_{L}}^{0}(t_{L})=\frac{1}{l-1}\sum_{1\leq i<j \leq l}\sum_{\substack{a\geq 0\\
b_{L\setminus\{i,j\}}\geq 0}}\langle\tau_{a}\tau_{b_{L\setminus
\{i,j\}}}\lambda_{g}\rangle_{g}\Psi_{b_{L\setminus
\{i,j\}}}^{0}(t_{L\setminus\{i,j\}})\\\nonumber
&\cdot\frac{(t_{j}-1)t_{i}^{2}\Psi_{a+1}^{0}(t_{i})-(t_{i}-1)t_{j}^{2}\Psi_{a+1}^{0}(t_{j})}
{t_{i}-t_{j}}
\end{align}
\end{corollary}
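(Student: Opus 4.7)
The plan is to extract the lowest-order term in the Taylor expansion of equation $(1)$ of Theorem~1.1 around $\tau=0$, and to verify that this coefficient is exactly the identity of Corollary~1.3 (up to an overall factor of $-(l-1)$, which is nonzero for $l\geq 2$).

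First I would record three structural facts. The expansion formula $\hat{\Psi}_{b}(t;\tau)=\sum_{k=0}^{b}\tau^{k}(\tau+1)^{-b-1}\Psi_{b}^{k}(t)$ immediately gives $\hat{\Psi}_{b}(t;0)=\Psi_{b}^{0}(t)$, because the terms with $k\geq 1$ vanish at $\tau=0$. Using the Chern polynomial definition and the Mumford relation $\Lambda_{g}^{\vee}(1)\Lambda_{g}^{\vee}(-1)=(-1)^{g}$, together with $\Lambda_{g}^{\vee}(0)=(-1)^{g}\lambda_{g}$, one gets $\Gamma_{g}(0)=\Lambda_{g}^{\vee}(1)\Lambda_{g}^{\vee}(-1)\Lambda_{g}^{\vee}(0)=\lambda_{g}$, so $\langle\tau_{b_{L}}\Gamma_{g}(0)\rangle_{g}=\langle\tau_{b_{L}}\lambda_{g}\rangle_{g}$. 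Finally, the factorization $(\tau^{2}+\tau)^{k}=\tau^{k}(\tau+1)^{k}$ shows that the lowest power of $\tau$ appearing in $(\tau^{2}+\tau)^{l-2}$ is $\tau^{l-2}$, while the lowest power appearing in $(\tau^{2}+\tau)^{l-1}$ is $\tau^{l-1}$.

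Next I would read off the coefficient of $\tau^{l-2}$ on each side of $(1)$. On the LHS, the second line carries the prefactor $(\tau^{2}+\tau)^{l-1}$, so it starts at order $\tau^{l-1}$ and contributes nothing at order $\tau^{l-2}$. In the first line, the bracketed expression evaluated at $\tau=0$ reduces to $(l-1)\langle\tau_{b_{L}}\Gamma_{g}(0)\rangle_{g}=(l-1)\langle\tau_{b_{L}}\lambda_{g}\rangle_{g}$, since the $(\tau^{2}+\tau)$-weighted derivative term vanishes at $\tau=0$. Combined with $\hat{\Psi}_{b_{L}}(t_{L};0)=\Psi_{b_{L}}^{0}(t_{L})$, the lowest-order contribution of the LHS is
\[
-(l-1)\sum_{b_{L}\geq 0}\langle\tau_{b_{L}}\lambda_{g}\rangle_{g}\Psi_{b_{L}}^{0}(t_{L}).
\]
On the RHS, $T_{2}$ and $T_{3}$ carry prefactors $(\tau^{2}+\tau)^{l}$ and $(\tau^{2}+\tau)^{l-1}$ respectively, both starting at order $\tau^{l-1}$ or higher, so neither contributes at $\tau^{l-2}$. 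Only $T_{1}$ contributes, through the prefactor $(\tau^{2}+\tau)^{l-2}/(\tau+1)=\tau^{l-2}(\tau+1)^{l-3}$ whose value at $\tau=0$ is $1$; evaluating the remaining factors of $T_{1}$ at $\tau=0$ and using $\Gamma_{g}(0)=\lambda_{g}$ and $\hat{\Psi}_{b}(t;0)=\Psi_{b}^{0}(t)$ yields exactly the RHS of Corollary~1.3 (after multiplication by $-(l-1)$).

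Equating both sides at order $\tau^{l-2}$ and dividing by $-(l-1)$ gives Corollary~1.3. The only real work is the bookkeeping in the expansion of $T_{1}$: one must track how the two summands $t_{i}^{2}\tau$ and $t_{i}$ inside $(t_{i}^{2}\tau+t_{i})$ interact with the $\tau$-expansion of $\hat{\Psi}_{a+1}(t_{i};\tau)$ to isolate the coefficient of $\tau^{l-2}$. This is the main potential obstacle, but it is a straightforward term-by-term calculation once one organizes the expansion as in equations $(64)$--$(67)$. No case for $l=1$ is needed, because the statement is vacuous then (there is no pair $i<j$), and for $l\geq 2$ the factor $-(l-1)$ is invertible.
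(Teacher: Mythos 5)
Your approach is exactly the paper's (Section 5.3): expand identity $(1)$ at $\tau=0$, observe that the lowest order is $\tau^{l-2}$, that the second line of the left-hand side and the terms $T_2$, $T_3$ only start at order $\tau^{l-1}$, and that the surviving coefficient of $\tau^{l-2}$ is the stated identity multiplied by $-(l-1)$. One point deserves real care, and it is precisely the step you defer as ``bookkeeping.'' Setting $\tau=0$ in $T_1$ kills the summand $t_i^2\tau$ of $(t_i^2\tau+t_i)$, so the coefficient of $\tau^{l-2}$ carries $(t_j-1)\,t_i\,\Psi^{0}_{a+1}(t_i)$, \emph{not} $(t_j-1)\,t_i^{2}\,\Psi^{0}_{a+1}(t_i)$ as printed in the corollary; the $t_i^2$ summand first contributes at order $\tau^{l-1}$, as one can see from the paper's own expansion $(65)$ and from $(82)$, where it reappears as $t_i\cdot t_i\Psi^{0}_{a+1}(t_i)$. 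The first-power version is also the one for which the total $t$-degrees of the two sides agree (both equal to $2g-3+2l$) and which yields the correct recursion $\langle\tau_{b_L}\lambda_g\rangle_g=\frac{1}{l-1}\sum_{i<j}\langle\tau_{b_i+b_j-1}\tau_{b_{L\setminus\{i,j\}}}\lambda_g\rangle_g\frac{(b_i+b_j)!}{b_i!b_j!}$ in the sequel. So your method is sound and coincides with the paper's, but a literal execution of it proves the corollary with $t_i,t_j$ in place of $t_i^2,t_j^2$ (the printed $t_i^2$ appears to be a typo propagated from $(78)$); your assertion that the $\tau=0$ evaluation ``yields exactly the RHS of Corollary 1.3'' as printed is the one claim that does not survive the computation you postpone.
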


Here, we introduce two notations. Let
$g(x_{1},..,x_{l})=\sum_{i_{k}\geq 0}a_{i_{1}i_{2}\cdots
i_{l}}x_{1}^{i_{1}}\cdots x_{l}^{i_{l}} \in Q[x_{1},..,x_{l}]$,
\begin{align*}
F_{d}(g(x_{1},\cdots,x_{l}))=\sum_{\sum_{k=1}^{l}i_{k}=d}a_{i_{1}i_{2}\cdots
i_{l}}x_{1}^{i_{1}}\cdots x_{l}^{i_{l}}
\end{align*}
and
\begin{align*}
[x_{1}^{j_{1}}\cdots
x_{l}^{j_{l}}]g(x_{1},\cdots,x_{l})=a_{j_{1}j_{2}\cdots j_{l}}.
\end{align*}

Now, let us use these two operator to corollary 5.2. By $(60)$, we
have
\begin{align*}
F_{2g-3+2l}(\text {$LHS$ of $(80)$})=(l-1)\sum_{b_{L}\geq 0}\langle
\tau_{b_{L}}\lambda\rangle_{g}b_{L}!y_{L}^{b_{L}+1}
\end{align*}
\begin{align*}
F_{2g-3+2l}(\text{$RHS$ of $(80)$})=\sum_{1\leq i<j \leq
l}\sum_{\substack{a\geq 0\\ b_{L}\geq
0}}\langle\tau_{b_{L\setminus\{i,j\}}}\lambda_{g}\rangle_{g}(a+1)!b_{L\setminus\{i,j\}}!t_{L\setminus
\{i,j\}}^{b_{L\setminus
\{i,j\}}+1}\frac{t_{j}t_{i}^{a+4}-t_{i}t_{j}^{a+4}}{t_{i}-t_{j}}
\end{align*}
Then,

\begin{align*}
[t_{L}^{b_{L}+1}]F_{2g-3+2l}(\text {$LHS$ of $(80)$})=(l-1)\langle
\tau_{b_{L}}\lambda\rangle_{g}b_{L}!
\end{align*}

\begin{align*}
[t_{L}^{b_{L}+1}]F_{2g-3+2l}(\text{$RHS$ of $(80)$})=\sum_{1\leq i<j
\leq
l}\langle\tau_{b_{i}+b_{j}-1}\tau_{b_{L\setminus\{i,j\}}}\lambda_{g}\rangle_{g}(b_{i}+b_{j})!b_{L\setminus
\{i,j\}}!
\end{align*}

Hence, we have
\begin{align*}
\langle \tau_{b_{L}}\lambda\rangle_{g}=\frac{1}{l-1}\sum_{1\leq i<j
\leq
l}\langle\tau_{b_{i}+b_{j}-1}\tau_{b_{L\setminus\{i,j\}}}\lambda_{g}\rangle_{g}\frac{(b_{i}+b_{j})!}{b_{i}!b_{j}!}
\end{align*}
Induction on $l$, we get the $\lambda_{g}$ integral,
\begin{align*}
\langle \tau_{b_{L}}\lambda\rangle_{g}=\binom{2g-3+l}{b_{L}}c_{g}
\end{align*}

Now, let us calculate the next degree of $\tau$. By
$a_{0}(\lambda)=(-1)^{g}\lambda_{g}\Lambda_{g}^{\vee}(-1)$ showed in
$(48)$ and $\Lambda_{g}^{\vee}(1)\Lambda_{g}^{\vee}(-1)=(-1)^{g}$,
we have
\begin{align}
[\tau^{l-1}]LHS&=-\sum_{b_{L}\geq
0}\langle\tau_{b_{L}}\lambda_{g}\rangle_{g}l\left(
-(|b_{L}|+1)\Psi_{b_{L}}^{0}(t_{L})+\sum_{j=1}\Psi_{b_{j}}^{1}(t_{j})\Psi_{b_{L\setminus\{j\}}}^{0}(t_{L\setminus\{j\}})\right)\\\nonumber
&-\sum_{b_{L}\geq
0}\langle\tau_{b_{L}}\lambda_{g}\rangle_{g}\sum_{i=1}^{l}(t_{i}^{2}-t_{i})\frac{\partial}{\partial
t_{i}}\Psi_{b_{i}}^{0}(t_{i})\Psi_{b_{L\setminus\{i\}}}^{0}(t_{L\setminus\{i\}})-\sum_{b_{L}\geq
0}\langle\tau_{b_{L}}\Lambda_{g}^{\vee}(1)a_{1}(\lambda)\rangle_{g}l\Psi_{b_{L}}^{0}(t_{L})
\end{align}
\begin{align}
&[\tau^{l-1}]T_{1}\\\nonumber
&=-\sum_{1\leq i<j \leq l}\sum_{\substack{a\geq 0\\
b_{L\setminus\{i,j\}}\geq 0}}\langle\tau_{a}\tau_{b_{L\setminus
\{i,j\}}}\lambda_{g}\rangle_{g}\Psi_{b_{L\setminus
\{i,j\}}}^{0}(t_{L\setminus\{i,j\}})[\frac{(t_{j}-1)t_{i}\left(\Psi_{a+1}^{1}(t_{i})+(t_{i}-|b_{L\setminus\{i,j\}}|-a-3)
\Psi_{a+1}^{0}(t_{i})\right)} {t_{i}-t_{j}}\\\nonumber &-\frac{
(t_{i}-1)t_{j}\left(\Psi_{a+1}^{1}(t_{j})+(t_{j}-|b_{L\setminus\{i,j\}}|-a-3)\Psi_{a+1}^{0}(t_{j})\right)}
{t_{i}-t_{j}}]\\\nonumber &-\sum_{1\leq i<j \leq l}\sum_{\substack{a\geq 0\\
b_{L\setminus\{i,j\}}\geq 0}}\langle\tau_{a}\tau_{b_{L\setminus
\{i,j\}}}\lambda_{g}\rangle_{g}\sum_{r\neq
i,j}\Psi_{b_{r}}^{1}(t_{r})\Psi_{b_{L\setminus
\{i,j\}}}^{0}(t_{L\setminus\{i,j\}})\frac{(t_{j}-1)t_{i}\Psi_{a+1}^{0}(t_{i})-(t_{i}-1)t_{j}\Psi_{a+1}^{0}(t_{j})}
{t_{i}-t_{j}}\\\nonumber &
-\sum_{1\leq i<j \leq l}\sum_{\substack{a\geq 0\\
b_{L\setminus\{i,j\}}\geq 0}}\langle\tau_{a}\tau_{b_{L\setminus
\{i,j\}}}\Lambda_{g}^{\vee}(1)a_{1}(\lambda)\rangle_{g}\Psi_{b_{L\setminus\{i,j\}}}^{0}(t_{L\setminus\{i,j\}})
\frac{(t_{j}-1)t_{i}\Psi_{a+1}^{0}(t_{i})-(t_{i}-1)t_{j}\Psi_{a+1}^{0}(t_{j})}{t_{i}-t_{j}}
\end{align}
\begin{align}
[\tau^{l-1}]T_{2}=0
\end{align}
\begin{align}
[\tau^{l-1}]T_{3}=-\frac{1}{2}\sum_{i=1}^{l}\sum_{\substack{a_{1}\geq
0\\a_{2}\geq 0\\b_{L\setminus\{i\}}\geq
0}}\sum_{\substack{g_{1}+g_{2}=g\\|\mathcal{I}|\cup|\mathcal{J}|=L\setminus\{i\}}}^{stable}\langle\tau_{a_{1}}\tau_{b_{\mathcal{I}}}
\lambda_{g_{1}}\rangle_{g_{1}}\langle\tau_{a_{2}}\tau_{b_{\mathcal{J}}}
\lambda_{g_{2}}\rangle_{g_{2}}\Psi_{a_{1}+1}^{0}(t_{i})\Psi_{a_{2}+1}^{0}(t_{i})\Psi_{b_{L\setminus\{i\}}}^{0}(t_{L\setminus\{i\}})
\end{align}

Recall formula $(48)$,
$a_{1}(\lambda)=\sum_{m=1}^{g}m\lambda_{g-m}\lambda_{g}-(-1)^{g}\Lambda_{g}^{\vee}(-1)\lambda_{g-1}$
and Mumford's relation
$\Lambda_{g}^{\vee}(1)\Lambda_{g}^{\vee}(-1)=(-1)^g$. We can write
\begin{align}
\Lambda_{g}^{\vee}(1)a_{1}(\lambda)=\sum_{d=g-1}^{3g-3}P_{d}(\lambda)
\end{align}
where $P_{d}(\lambda)$ is some combinatoric of $\lambda$-class with
degree $d$. For example
\begin{align}
P_{g-1}(\lambda)=\lambda_{g-1}, P_{g}(\lambda)=g\lambda_{g},
P_{g+1}(\lambda)=-\lambda_{g}\lambda_{1},\cdots,
P_{3g-3}(\lambda)=(-1)^{g+1}\lambda_{g}\lambda_{g-1}\lambda_{g-2}
\end{align}

By the above calculation, substituting $(85)$ to the identity
$(81)=(82)+(84)$, we have
\begin{corollary}
\begin{align}
&\sum_{b_{L}\geq 0}\langle\tau_{b_{L}}\lambda_{g}\rangle_{g}l\left(
-(|b_{L}|+1)\Psi_{b_{L}}^{0}(t_{L})+\sum_{j=1}\Psi_{b_{j}}^{1}(t_{j})\Psi_{b_{L\setminus\{j\}}}^{0}(t_{L\setminus\{j\}})\right)\\\nonumber
&+\sum_{b_{L}\geq
0}\langle\tau_{b_{L}}\lambda_{g}\rangle_{g}\sum_{i=1}^{l}(t_{i}^{2}-t_{i})\frac{\partial}{\partial
t_{i}}\Psi_{b_{i}}^{0}(t_{i})\Psi_{b_{L\setminus\{i\}}}^{0}(t_{L\setminus\{i\}})+\sum_{b_{L}\geq
0}\langle\tau_{b_{L}}\sum_{d=g-1}^{3g-3}P_{d}(\lambda)\rangle_{g}l\Psi_{b_{L}}^{0}(t_{L})\\\nonumber
&=\sum_{1\leq i<j \leq l}\sum_{\substack{a\geq 0\\
b_{L\setminus\{i,j\}}\geq 0}}\langle\tau_{a}\tau_{b_{L\setminus
\{i,j\}}}\lambda_{g}\rangle_{g}\Psi_{b_{L\setminus
\{i,j\}}}^{0}(t_{L\setminus\{i,j\}})[\frac{(t_{j}-1)t_{i}\left(\Psi_{a+1}^{1}(t_{i})+(t_{i}-|b_{L\setminus\{i,j\}}|-a-3)
\Psi_{a+1}^{0}(t_{i})\right)} {t_{i}-t_{j}}\\\nonumber & -\frac{
(t_{i}-1)t_{j}\left(\Psi_{a+1}^{1}(t_{j})+(t_{j}-|b_{L\setminus\{i,j\}}|-a-3)\Psi_{a+1}^{0}(t_{j})\right)}
{t_{i}-t_{j}}]\\\nonumber &+\sum_{1\leq i<j \leq l}\sum_{\substack{a\geq 0\\
b_{L\setminus\{i,j\}}\geq 0}}\langle\tau_{a}\tau_{b_{L\setminus
\{i,j\}}}\lambda_{g}\rangle_{g}\sum_{r\neq
i,j}\Psi_{b_{r}}^{1}(t_{r})\Psi_{b_{L\setminus
\{i,j\}}}^{0}(t_{L\setminus\{i,j\}})\frac{(t_{j}-1)t_{i}\Psi_{a+1}^{0}(t_{i})-(t_{i}-1)t_{j}\Psi_{a+1}^{0}(t_{j})}
{t_{i}-t_{j}}\\\nonumber &+
\sum_{1\leq i<j \leq l}\sum_{\substack{a\geq 0\\
b_{L\setminus\{i,j\}}\geq 0}}\langle\tau_{a}\tau_{b_{L\setminus
\{i,j\}}}\sum_{d=g-1}^{3g-3}P_{d}(\lambda)\rangle_{g}\Psi_{b_{L\setminus\{i,j\}}}^{0}(t_{L\setminus\{i,j\}})
\frac{(t_{j}-1)t_{i}\Psi_{a+1}^{0}(t_{i})-(t_{i}-1)t_{j}\Psi_{a+1}^{0}(t_{j})}{t_{i}-t_{j}}\\\nonumber
&+\frac{1}{2}\sum_{i=1}^{l}\sum_{\substack{a_{1}\geq 0\\a_{2}\geq
0\\b_{L\setminus\{i\}}\geq
0}}\sum_{\substack{g_{1}+g_{2}=g\\|\mathcal{I}|\cup|\mathcal{J}|=L\setminus\{i\}}}^{stable}\langle\tau_{a_{1}}\tau_{b_{\mathcal{I}}}
\lambda_{g_{1}}\rangle_{g_{1}}\langle\tau_{a_{2}}\tau_{b_{\mathcal{J}}}
\lambda_{g_{2}}\rangle_{g_{2}}\Psi_{a_{1}+1}^{0}(t_{i})\Psi_{a_{2}+1}^{0}(t_{i})\Psi_{b_{L\setminus\{i\}}}^{0}(t_{L\setminus\{i\}})
\end{align}
\end{corollary}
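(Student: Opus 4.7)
The plan is to take the result of Theorem 1.1 in its expanded form $(64) = (65)+(66)+(67)$, then extract the coefficient of $\tau^{l-1}$ from both sides after Taylor expanding the rational factors $\tau^{a}/(\tau+1)^{b}$ around $\tau=0$ via $\tau^{a}/(\tau+1)^{b} = \sum_{i\geq 0}\binom{-b}{i}\tau^{a+i}$. This is the next step after Corollary 1.3, which was obtained by pulling out $\tau^{l-2}$; so I already know the overall strategy works and what the $\tau^{l-2}$ terms look like, and I just need to track one more order.

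First I would determine the minimal $\tau$-power each of the four pieces can possibly contribute. In the LHS (formula (64)) the relevant monomials are $\tau^{|k_{L}|+l+m-1}$, $\tau^{|k_{L}|+l+m-2}$ and $\tau^{|b_{L}|+l+m-2}$, the last of which always carries $|b_{L}|\geq 0$, so only the middle one can hit $\tau^{l-2}$ and reach $\tau^{l-1}$ at order $m+|k_L|=1$. In $T_{1}$ the structure is parallel. In $T_{2}$ the prefactor $(\tau^{2}+\tau)^{l}$ forces the minimum power to be $\tau^{l}$, so $[\tau^{l-1}]T_{2}=0$. In $T_{3}$ the prefactor $(\tau^{2}+\tau)^{l-1}$ gives minimum $\tau^{l-1}$, attained only when every $k_i, n_1, n_2, m_1, m_2$ vanishes; this is exactly the $\lambda_{g_1}\lambda_{g_2}$ term from $a_{0}(\lambda)\cdot a_{0}(\lambda)$, multiplied by Mumford's relation $\Lambda_{g}^{\vee}(1)\Lambda_{g}^{\vee}(-1)=(-1)^{g}$, reproducing the last line of Corollary 1.4.

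Next I would compute the $\tau^{l-1}$ coefficient of the LHS and of $T_{1}$ by itemizing precisely the sources of one extra power of $\tau$ beyond the $\tau^{l-2}$ level used in Corollary 1.3. There are three such sources to sum over: (a) using $m=0$, $|k_L|=0$ and pulling the $\binom{-(|b_L|+2)}{1}\tau = -(|b_L|+2)\tau$ correction from the denominator expansion, which produces the $-(|b_L|+1)\Psi_{b_L}^{0}$ and $(t_i^2-t_i)\partial_{t_i}\Psi_{b_i}^{0}$ terms on the LHS (the latter arising because the degree of $\Psi_{b_L}^{k_L}$ as a polynomial in $t_L$ equals $|b_L|+l$); (b) using $m=0$ but $|k_L|=1$, which introduces exactly one $\Psi_{b_j}^{1}$ factor and leaves the others as $\Psi^{0}$; (c) using $m=1$, i.e.\ picking up the Hodge class $\Lambda_{g}^{\vee}(1)a_{1}(\lambda) = \sum_{d=g-1}^{3g-3}P_{d}(\lambda)$ with $|k_L|=0$. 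Organizing (a)--(c) for the LHS yields formula (81), and the analogous bookkeeping for $T_1$ yields formula (82); subtracting then gives the claimed identity.

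The main obstacle is purely combinatorial bookkeeping: keeping track of the index $(i,j)$ pairs, the third factor $\sum_{r\neq i,j}\Psi_{b_r}^{1}\Psi_{b_{L\setminus\{i,j\}}}^{0}$ that appears in $T_1$ when one of the spectator $\Psi^{k}$ has $k=1$ rather than $k=0$, and the shift $t_i - |b_{L\setminus\{i,j\}}|-a-3$ that arises inside $T_1$ from the product of the degree-counting factor $-(|b_{L\setminus\{i,j\}}|+a+3)$ with the rational $(t_j-1)t_i/(t_i-t_j)$. After carrying out this bookkeeping, the identity of Corollary 1.4 follows immediately from $[\tau^{l-1}](\text{LHS}) = [\tau^{l-1}]T_{1} + [\tau^{l-1}]T_{3}$ together with the explicit $a_{1}(\lambda) = \sum_{m=1}^{g} m\lambda_{g-m}\lambda_{g} - (-1)^{g}\Lambda_{g}^{\vee}(-1)\lambda_{g-1}$, which unpacks into the $P_d(\lambda)$ listed in the statement.
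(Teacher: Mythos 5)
Your proposal follows the paper's proof exactly: the paper likewise expands $(64)$--$(67)$ at $\tau=0$, extracts the coefficient of $\tau^{l-1}$ (formulas $(81)$--$(84)$, with $[\tau^{l-1}]T_2=0$ and $T_3$ supplying the splitting term), and then substitutes $\Lambda_g^{\vee}(1)a_1(\lambda)=\sum_{d=g-1}^{3g-3}P_d(\lambda)$. The only caveats are small misattributions in your source (a): the term $(t_i^2-t_i)\frac{\partial}{\partial t_i}\Psi^0_{b_i}$ actually arises from the $\tau^0$ part of $\frac{1}{t_i\tau+1}\hat{\Psi}_{b_i+1}$ via the recursion $\Psi^0_{b+1}=(t^2-t)\frac{d}{dt}\Psi^0_{b}$ (the second summand of $(64)$), not from a degree count, and the coefficient $-l(|b_L|+1)$ requires combining the binomial correction with the order-zero contribution of the first monomial $(l-2+m+|k_L|-|b_L|)\tau^{|k_L|+l+m-1}$ — but these are details the expansion produces automatically, so the plan goes through.
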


Since
$\langle\tau_{b_{L}}\lambda_{g}\rangle_{g}=\binom{2g-3+l}{b_{L}}c_{g}$,
thus all the Hodge integral of type
\begin{align}
\langle\tau_{b_{L}}\sum_{d=g-1}^{3g-3}P_{d}(\lambda)\rangle_{g}
\end{align}
can be calculated from formula $(87)$ in corollary 4.3.
\subsection{Explicit expression for Hodge integral $\langle\tau_{b_{L}}\lambda_{g}\lambda_{1}\rangle_{g}$}
Although corollary 4.3 says that all the Hodge integral $(88)$ can
be computed, it is not easy to write down their explicit formula. In
this subsection, we will show how to obtain an explicit formula for
$\langle\tau_{b_{L}}\lambda_{g}\lambda_{1}\rangle_{g}$.

Both sides of $(87)$ belong to $Q[t_{1},..,t_{l}]$. We have known
that
$\langle\tau_{b_L}\lambda_{g}\rangle_{g}=\binom{2g-3+l}{b_L}c_{g}$,
and $\langle\tau_{b_L}\lambda_{g-1}\rangle_{g}$ can also be
calculated easily from a recursion formula showed in \cite{Zhu}.
Thus, we have

\begin{theorem}
If $\sum_{i=1}^{l}b_{i}=2g-4+l$, there exists a constant
$C(g,l,b_1,..,b_{l})$ related to $g,l,b_{1},..,b_{l}$, such that
\begin{align}
\langle\tau_{b_{L}}\lambda_{g}\lambda_{1}\rangle_{g}=\frac{1}{l}\sum_{1\leq
i<j \leq l}\langle\tau_{b_{i}+b_{j}-1}\tau_{b_{L\setminus
\{i,j\}}}\lambda_{g}\lambda_{1}\rangle_{g}\frac{(b_{i}+b_{j})!}{b_{i}!b_{j}!}+C(g,l,b_1,..b_{l})
\end{align}
$C(g,l,b_1,..,b_{l})$ is a very complicated combinatoric constant
which is given at Appendix B.
\end{theorem}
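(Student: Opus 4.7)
The plan is to use Corollary 1.4 (formula (87)) as the source identity, treating it as a polynomial identity in $t_1,\ldots,t_l$ and extracting a specific monomial coefficient. Since $\Lambda_{g}^{\vee}(1)a_{1}(\lambda)=\sum_{d=g-1}^{3g-3}P_{d}(\lambda)$ contains $P_{g+1}(\lambda)=-\lambda_{g}\lambda_{1}$, the integral $\langle\tau_{b_{L}}\lambda_{g}\lambda_{1}\rangle_{g}$ enters (87) through the $\sum_{d}P_{d}(\lambda)$ terms. The key dimensional observation is that $\langle\tau_{b_{L}'}P_{d}(\lambda)\rangle_{g}$ vanishes unless $|b_{L}'|=3g-3+l-d$, so fixing $|b_{L}|=2g-4+l$ (the dimension condition for $\lambda_{g}\lambda_{1}$) precisely isolates $P_{g+1}$ among the monomials that can attain the extracted degree.

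Concretely, I fix $b_{L}$ with $\sum_{i}b_{i}=2g-4+l$ and extract the coefficient of $t_{1}^{b_{1}+1}\cdots t_{l}^{b_{l}+1}$ from both sides of (87). On the LHS, the third term $\sum_{b_{L}'}\langle\tau_{b_{L}'}\sum_{d}P_{d}(\lambda)\rangle_{g}\cdot l\,\Psi_{b_{L}'}^{0}(t_{L})$ contributes at the leading monomial (with $b_{L}'=b_{L}$ and $d=g+1$) the value $-l\prod_{i}b_{i}!\cdot\langle\tau_{b_{L}}\lambda_{g}\lambda_{1}\rangle_{g}$, using $f^{0}(b,b+1)=b!$. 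The cases $d=g$ and $d=g-1$ also contribute, with $|b_{L}'|$ larger by $1$ or $2$, but these yield $\lambda_{g}$ and $\lambda_{g-1}$ integrals that are already known: $\langle\tau_{b_{L}}\lambda_{g}\rangle_{g}=\binom{2g-3+l}{b_{L}}c_{g}$, and $\langle\tau_{b_{L}}\lambda_{g-1}\rangle_{g}$ via \cite{Zhu}. The remaining LHS pieces only carry $\lambda_{g}$-integrals.

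On the RHS, the analogous recursive $\lambda_{g}\lambda_{1}$ contribution comes from the third double-sum, the one carrying $\sum_{d}P_{d}(\lambda)$. The dimension identity $a+|b_{L\setminus\{i,j\}}|=2g-5+l$ for $d=g+1$ forces $a=b_{i}+b_{j}-1$, and a direct expansion of $\frac{(t_{j}-1)t_{i}\Psi_{a+1}^{0}(t_{i})-(t_{i}-1)t_{j}\Psi_{a+1}^{0}(t_{j})}{t_{i}-t_{j}}$ shows that the coefficient of $t_{i}^{b_{i}+1}t_{j}^{b_{j}+1}$ is $(b_{i}+b_{j})!$, again from the leading term $f^{0}(a+1,a+2)=(a+1)!$. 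Combined with $\prod_{k\neq i,j}b_{k}!$ from the other $\Psi^{0}$-factors, and after dividing by $l\prod_{i}b_{i}!$, this matches the $\frac{1}{l}\cdot\frac{(b_{i}+b_{j})!}{b_{i}!b_{j}!}$ in the claim. All remaining RHS contributions -- from the first two $\lambda_{g}$-only sums, from the $d\in\{g,g-1\}$ pieces on either side, and from the term $T_{3}$ (products of lower-genus Hodge integrals, accessible by induction on $g+l$) -- are absorbed into $C(g,l,b_{1},\ldots,b_{l})$.

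The main obstacle is the combinatorial bookkeeping needed to write $C(g,l,b_{1},\ldots,b_{l})$ in closed form. Several non-leading monomials of $\Psi_{b}^{0}$ and $\Psi_{b}^{1}$ contribute, encoded by the coefficients $f^{0}(b,i)$ and $f^{1}(b,j)$ from Section 5.1 -- in particular the harmonic factor $f^{1}(b,b+2)=(b+1)!\sum_{k=2}^{b+1}\frac{1}{k}$ -- so the resulting expression for $C$ is lengthy (this is why the paper relegates it to an appendix). Nevertheless, each of those terms pairs a Hodge integral of $\lambda_{g}$-type, $\lambda_{g-1}$-type, or lower-genus type with a known polynomial coefficient, so the recursion closes. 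Combined with the initial value $\langle\tau_{2g-3}\lambda_{g}\lambda_{1}\rangle_{g}=\frac{1}{12}[g(2g-3)b_{g}+b_{1}b_{g-1}]$ from \cite{Li}, induction on $l$ then determines every $\langle\tau_{b_{L}}\lambda_{g}\lambda_{1}\rangle_{g}$ explicitly.
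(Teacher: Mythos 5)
Your proposal is correct and follows essentially the same route as the paper: extract the coefficient of $t_{1}^{b_{1}+1}\cdots t_{l}^{b_{l}+1}$ (equivalently, restrict to total degree $2g+2l-4$ and then take that monomial) in Corollary 1.4, observe that only the $P_{g+1}(\lambda)=-\lambda_{g}\lambda_{1}$ pieces on the two sides carry $\langle\tau\cdots\lambda_{g}\lambda_{1}\rangle_{g}$, with leading coefficients $f^{0}(b,b+1)=b!$ and $f^{0}(a+1,a+2)=(a+1)!$ producing the kernel $\frac{(b_i+b_j)!}{b_i!b_j!}$, and absorb all remaining known ($\lambda_{g}$, $\lambda_{g-1}$, lower-genus) contributions into $C(g,l,b_{1},\ldots,b_{l})$. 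This matches the paper's proof in Section 5.4 together with the bookkeeping of Appendix B.
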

\begin{proof}
Taking all the terms with degree $2g+2l-4$ in formula $(87)$. we
have
\begin{align}
F_{2g+2l-4}(\text{$LHS$ of $(87)$})=-l\sum_{b_{L}\geq
0}\langle\tau_{b_{L}}\lambda_{g}\lambda_{1}\rangle_{g}b_{L}!t_{L}^{b_{L}+1}+G_{1}(t_1,..,t_{l})
\end{align}
\begin{align}
&F_{2g+2l-4}(\text{$RHS$ of $(87)$})=-\sum_{1\leq i<j \leq
l}\sum_{\substack{a\geq 0\\b_{L\setminus\{i,j\}}\geq
0}}\langle\tau_{a}\tau_{b_{L\setminus\{i,j\}}}\lambda_{g}\lambda_{1}\rangle_{g}\\\nonumber
&\cdot b_{L\setminus\{i,j\}}!(a+1)!
\sum_{m=0}^{a+1}t_{L\setminus\{i,j\}}^{b_{L\setminus\{i,j\}}+1}t_{i}^{a+2-m}t_{j}^{m+1}+G_{2}(t_1,..,t_{l})
\end{align}
where
$G_{i}(t_1,..,t_{l})=\sum_{|d_{L}|=2g+2l-4}C_{i}(g,l,d_1,..,d_{l})t_{L}^{d_{L}}\in
\mathbb{Q}[t_{1},..t_{l}]$, $C_{i}(g,l,d_1,..,d_{l})$ is a
combinatoric constant related to $g,l,d_1,..,d_{l}$. Thus, we have
\begin{align}
&\sum_{b_{L}\geq
0}\langle\tau_{b_{L}}\lambda_{g}\lambda_{1}\rangle_{g}b_{L}!t_{L}^{b_{L}+1}
=\frac{1}{l}\sum_{1\leq i<j \leq l}\sum_{\substack{a\geq
0\\b_{L\setminus\{i,j\}}\geq
0}}\langle\tau_{a}\tau_{b_{L\setminus\{i,j\}}}\lambda_{g}\lambda_{1}\rangle_{g}\\\nonumber&
\cdot b_{L\setminus\{i,j\}}!(a+1)!
\sum_{m=0}^{a+1}t_{L\setminus\{i,j\}}^{b_{L\setminus\{i,j\}}+1}t_{i}^{a+2-m}t_{j}^{m+1}+G(t_1,..,t_{l})
\end{align}
where
$G(t_1,..,t_{l})=\sum_{|d_{L}|=2g+2l-4}\hat{C}(g,l,b_1,..,b_{l})t_{L}^{d_{L}}=\frac{1}{l}(G_{1}(t_1,..t_{l})-G_{2}(t_1,..,t_{l}))$

After taking the coefficients of $(92)$,
\begin{align}
&\langle\tau_{b_{L}}\lambda_{g}\lambda_{1}\rangle_{g}
=\frac{1}{l}\sum_{1\leq i<j\leq
l}\langle\tau_{b_{i}+b_{j}-1}\tau_{b_{L\setminus\{i,j\}}}\lambda_{g}\lambda_{1}\rangle_{g}
\frac{(b_{i}+b_{j})!}{b_{i}!b_{j}!}+\frac{\hat{C}(g,l,b_1+1,..,b_{l}+1)}{b_{L}!}
\end{align}
Let
$C(g,l,b_1,..,b_{l})=\frac{\hat{C}(g,l,b_1+1,..,b_{l}+1)}{b_{L}!}$,
thus, the theorem is proved.
\end{proof}
We know that the initial value
$\langle\tau_{2g-3}\lambda_{g}\lambda_{1}\rangle_{g}=\frac{1}{12}[g(2g-3)b_{g}+b_{1}b_{g-1}]$
has been computed by Y. Li \cite{Li}. Therefore, from the recursion
formula (62), we can compute out all the Hodge integral of type $
\langle\tau_{b_{L}}\lambda_{g}\lambda_{1}\rangle_{g} $.

\section{Conclusion}
The original cut-and-join equation of Mari\~no-Vafa formula is a
complicated formula about the partition which can be changed to a
polynomial identity with the help of the Laplace transform in this
paper or the symmetrization method in \cite{Ch}. This polynomial
identity has some applications because it is manageable. We derived
some corollaries about the Hodge integral identities. In fact, we
have found an algorithm to calculate the Hodge integral appearing in
Mari\~no-Vafa formula.

Step1, expanding the $\tau$ at some special point, and collect the
corresponding level of $\tau$ in theorem 1.1.

Step2, taking certain degrees of $t_{L}$, we will get the
corresponding Hodge integral appearing in Mari\~no-Vafa formula.

In this paper, we only calculate four cases by using step1 and
calculate out a new Hodge integral via Step2 from the result of last
case after doing step1. We have not considered other cases here,
because the computation will be more complicated. We hope that this
algorithm can be compiled to a computer program.

We note that, the recursion formulas to calculate
$\langle\tau_{b_L}\lambda_{g-1}\rangle_{g}$ \cite{Zhu} and
$\langle\tau_{b_{L}}\lambda_{g}\lambda_{1}\rangle_{g}$ in theorem
1.5 have the similar recursion structure. For a given $g$, Hodge
integral with $l$-point will reduce to 1-point after only $l$ times
recursions. Thus, it is a very effective recursion relation. In
fact, many Hodge integral recursions will have this type structure
if they are calculated out by above algorithm. Thus it is
interesting to consider the following combinatoric problem.

Let $Q_{l}(b_{L})=Q_{l}(b_{1},b_{2},..,b_{l})$ be a symmetric
function on $(b_{1},b_{2},..,b_{l})\in (\mathbb{Z^{+}})^{l}$ which
is defined by the following recursion relation.

For given $g, k$, \makeatletter
\let\@@@alph\@alph
\def\@alph#1{\ifcase#1\or \or $'$\or $''$\fi}\makeatother
\begin{subnumcases}
{Q_{1}(b)=} constant, &$b=3g-2-k$, \label{eq:a1}\\\nonumber 0,
&$others$.\label{eq:a2}\nonumber
\end{subnumcases}
\makeatletter\let\@alph\@@@alph\makeatother

\makeatletter
\let\@@@alph\@alph
\def\@alph#1{\ifcase#1\or \or $'$\or $''$\fi}\makeatother
\begin{subnumcases}
{Q_{l}(b_{L})=} \sum_{1\leq i<j\leq
l}Q_{l-1}(b_{i}+b_{j}-1,b_{L\setminus\{i,j\}})A_{l}(b_{L})+B_{l}(b_{L})
&$\sum_{i=1}^{l}b_{i}=3g-3+l-k$, \label{eq:a1}\\\nonumber 0,
&$others$.\label{eq:a2}
\end{subnumcases}
\makeatletter\let\@alph\@@@alph\makeatother

Where $A_{l}(b_{L})=A_{l}(b_{1},..,b_{l})$ and
$B_{l}(b_{L})=B_{l}(b_{1},..,b_{l})$ are some fixed functions
defined on $(b_{1},b_{2},..,b_{l})\in (\mathbb{Z^{+}})^{l}$.

It is interesting to study the properties of $Q_{l}(b_{L})$ defined
above. We hope that this combinatoric structure of Hodge integral
will be studied further in future.

\section{Appendix}
\subsection{Appendix A}
In this appendix, we will show how to derive our main result in
theorem 3.4  from the symmetrized cut-and-join equation of the
Mari\~no-Vafa formula \cite{Ch}.

when $b\geq 0$, Let
$$\Psi_{b}(y;\tau)=((y^2-y)(\frac{y\tau+1}{\tau+1})\frac{d}{dy})^b(\frac{y-1}{\tau+1})
=\sum_{k=0}^{b}\frac{\tau^k}{(\tau+1)^{b+1}}\psi_{b}^{k}(y)$$
Then the symmetrized partition function of Mari\~no-Vafa formula can
be written as
\begin{align}
W_{g,n}(y_{1},..,y_{n};\tau)=-(\tau(\tau+1))^{n-1}\sum_{\substack{b_{i}\geq
0\\i=1,..,n}}\langle\prod_{i=1}^{n}\tau_{b_{i}}
\Gamma_{g}(\tau)\rangle_{g}\prod_{i=1}^{n}\Psi_{b_{i}}(y_{i};\tau)
\end{align}

The main result of \cite{Ch}, i.e Theorem 3 in that paper, says
\begin{theorem}
The symmetrized generating series $W_{g,n}(y_{1},..,y_{n};\tau)$ is
a polynomial in the $y_{i}$ variables of total degree $6g-6+3n$ and
satisfies the symmetrized cut--and-join equation.
\begin{align*}
(\frac{\partial}{\partial
\tau}+\sum_{l=1}^{n}\frac{y_{l}^{2}-y_{l}}{\tau+1}\frac{\partial}{\partial
y_{l}})W_{g,n}(y_{1},..,y_{n};\tau)=T_{1}+T_{2}+T_{3}
\end{align*}
where
$$T_{1}=sym_{1,1}^{y}\frac{y_{1}(y_{2}-1)}{y_{1}-y_{2}}(\frac{y_{1}\tau+1}{\tau+1})D_{y_{1}}^{\tau}
W_{g,n-1}(y_{1},y_{3},..,y_{n};\tau)$$
$$
T_{2}=-\frac{1}{2}\sum_{l=1}^{n}D_{y_{l}}^{\tau}D_{y_{n+1}}^{\tau}W_{g-1,n+1}(y_{1},..,y_{n},y_{n+1};\tau)|_{y_{n+1}=y_{l}}
$$
$$
T_{3}=-\frac{1}{2}\sum_{\substack{g_{1}+g_{2}=g\\n_{1}+n_{2}=n+1\\2g_{1}-2+n_{1}>0\\2g_{2}-2+n_{2}>0}}
sym_{1,n_{1}-1}^{y}D_{y_{1}}^{\tau}
W_{g_{1},n_{1}}(y_{1},..,y_{n_{1}};\tau)D_{y_{1}}^{\tau}W_{g_{2},n_{2}}(y_{1},y_{n_{1}+1},..,y_{n};\tau)
$$
We need to explain the notations appear in above theorem. Where
$D_{y}^{\tau}=(y^2-y)(\frac{y\tau+1}{\tau+1})\frac{\partial}{\partial
y}$. For $i,j\geq 0$, $i+j\leq n$, let $sym_{i,j}^{x}$ be the
mapping, applied to a series in $x_{1},..,x_{n}$, given by
$$
sym_{i,j}^{x}f(x_{1},..,x_{n})=\sum_{\mathcal{R},\mathcal{S},\mathcal{T}}f(\mathbf{x}_{\mathcal{R}},
\mathbf{x}_{\mathcal{S}},\mathbf{x}_{\mathcal{T}})
$$
where the sum is over all ordered partitions
$(\mathcal{R},\mathcal{S},\mathcal{T})$ of $\{1,2,..,n\}$,
$\mathcal{R}=\{x_{r_{1}},..,x_{r_{i}} \}$,
$\mathcal{S}=\{x_{s_{1}},..,x_{s_{j}}\}$,
$\mathcal{T}=\{x_{t_{1}},..,x_{t_{n-i-j}}\}$ and
$(\mathbf{x}_{\mathcal{R}},\mathbf{x}_{\mathcal{S}},\mathbf{x}_{\mathcal{T}})
=(x_{r_{1}},..,x_{r_{i}},x_{s_{1}},..,x_{s_{j}},x_{t_{1}},..,x_{t_{n-i-j}})$,
where $r_{1}<\cdots<r_{i}$, $s_{1}<\cdots<s_{j}$, and
$t_{1}<\cdots<t_{n-i-j}$.
\end{theorem}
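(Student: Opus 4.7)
The strategy is to prove Theorem 7.1 by establishing its equivalence with Theorem 3.4 of the main text, modulo the cosmetic renaming $t_i \mapsto y_i$. First, I would observe that the recursion $\Psi_{b+1}(y;\tau) = (y^2-y)\frac{y\tau+1}{\tau+1}\frac{d}{dy}\Psi_b(y;\tau)$ with $\Psi_0(y;\tau) = (y-1)/(\tau+1)$ introduced in the appendix is identical to the one defining $\hat{\Psi}_b(t;\tau)$ in Section 2. Therefore the symmetrized partition function $W_{g,n}(y_1,\ldots,y_n;\tau)$ equals the Laplace transform $\hat{\mathcal{C}}_{g,n}(t_1,\ldots,t_n;\tau)$ of formula $(4)$, and the key operator $D_y^\tau = (y^2-y)\frac{y\tau+1}{\tau+1}\frac{d}{dy}$ simply raises the index: $D_{y_i}^\tau W_{g,n}$ replaces $\Psi_{b_i}(y_i;\tau)$ by $\Psi_{b_i+1}(y_i;\tau)$ inside the defining sum.

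With these identifications, the left-hand side of Theorem 7.1 coincides with the combination labeled $(11)-(13)$ in the proof of Theorem 3.4: the $\partial/\partial\tau$ piece is common to both formulations, while the summand $\sum_{l=1}^{n}(y_l^2-y_l)(\tau+1)^{-1}\partial_{y_l}$ matches the contribution arising from the unstable $(g_1,n_1)=(0,1)$ part of the cut term, as in the proof of Proposition 3.1 using the explicit evaluation of $\mathcal{C}_0(\alpha;\tau)$ on the framed Lambert curve $y(1-y)^\tau = x$. Thus the problem reduces to matching $T_1$, $T_2$, $T_3$ against the three sums on the right-hand side of equation $(10)$.

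The matches for $T_2$ and $T_3$ are nearly immediate: each factor $\prod_{n=1}^{2}\hat{\Psi}_{a_n+1}(t_i;\tau)$ in equation $(10)$ becomes a pair of $D_{y_i}^\tau$-derivatives acting on lower-complexity pieces of $W$, and the symmetrization $\mathrm{sym}_{1,n_1-1}^y$ packages the sum over ordered splittings $\mathcal{I}\sqcup\mathcal{J}=L\setminus\{i\}$ into Chen's compact form. For $T_1$ the work is more delicate: the antisymmetric-looking kernel $\frac{(t_j-1)(t_i^2\tau+t_i)\hat{\Psi}_{a+1}(t_i;\tau)-(t_i-1)(t_j^2\tau+t_j)\hat{\Psi}_{a+1}(t_j;\tau)}{t_i-t_j}$ in equation $(10)$ must be rearranged into $\mathrm{sym}_{1,1}^{y}\frac{y_1(y_2-1)}{y_1-y_2}\cdot\frac{y_1\tau+1}{\tau+1}\cdot D_{y_1}^\tau W_{g,n-1}(y_1,y_3,\ldots,y_n;\tau)$; this uses the identity $\hat{\Psi}_{a+1}(y;\tau) = D_y^\tau \hat{\Psi}_a(y;\tau)$ to absorb the derivative into $W_{g,n-1}$, and then reorganizes the resulting rational factor $\frac{y_1(y_2-1)(y_1\tau+1)}{(y_1-y_2)(\tau+1)}$ as the Chen kernel times the prefactor.

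The main obstacle will be the bookkeeping in this last step: Chen's kernel $\frac{y_1(y_2-1)}{y_1-y_2}$ is not symmetric under $y_1 \leftrightarrow y_2$, so the equivalence with our kernel, which is visibly antisymmetric in $(t_i,t_j)$, relies crucially on the fact that $\mathrm{sym}_{1,1}^y$ sums over all ordered choices and thus automatically symmetrizes its argument. One must carefully track which of the $n$ variables end up in the two distinguished positions and verify that the leftover factor $\prod_{k\neq 1,2}\Psi_{b_k}(y_k;\tau)$ inside $W_{g,n-1}$ reproduces the factor $\hat{\Psi}_{b_{L\setminus\{i,j\}}}$ of equation $(10)$. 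Finally, the polynomiality claim and the total-degree bound $6g-6+3n$ follow from $\deg_y \Psi_b(y;\tau) = 2b+1$ together with the Hodge vanishing $\langle\tau_{b_L}\Gamma_g(\tau)\rangle_g = 0$ for $|b_L| > 3g-3+n$, yielding total degree at most $2|b_L|+n \leq 2(3g-3+n)+n = 6g-6+3n$.
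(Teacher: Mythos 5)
Your proposal is correct in outline, but it inverts the logical direction the paper actually uses, and that is worth making explicit. The paper never proves this theorem: it is imported verbatim as Theorem 3 of L.~Chen's paper \cite{Ch}, and Appendix A then substitutes the polynomial expression $(96)$ for $W_{g,n}$ into Chen's equation in order to recover Theorem 3.4 --- i.e.\ the implication is run from the symmetrized equation to the Laplace-transformed one. You run it the other way: you take Theorem 3.4, which Section 3 establishes independently by Laplace-transforming the LLZ cut-and-join equation $(5)$ (so there is no circularity), and repackage its terms into Chen's form. The term-by-term dictionary you describe is exactly the set of identifications displayed in Appendix A: $W_{g,n}=\hat{\mathcal{C}}_{g,n}$ because $\Psi_b$ and $\hat{\Psi}_b$ satisfy the same recursion with the same initial value; $D_y^{\tau}$ shifts $b\mapsto b+1$ inside the defining sum; the left-hand side is the combination $(11)-(13)$, i.e.\ equation $(14)$; $T_2$ and $T_3$ match the stable cut terms up to the factor $(\tau^2+\tau)^{l-1}\cdot(\tau^2+\tau)=(\tau^2+\tau)^{l}$; and $\mathrm{sym}^{y}_{1,1}$ applied to Chen's non-symmetric kernel $\frac{y_1(y_2-1)}{y_1-y_2}\cdot\frac{y_1\tau+1}{\tau+1}$ produces, for each unordered pair $\{i,j\}$, precisely the antisymmetrized kernel of $T_1$ in $(10)$ --- your observation that the ordered sum in $\mathrm{sym}^{y}_{1,1}$ is what reconciles the two kernels is the correct key point. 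Your degree argument, $2|b_L|+n\le 2(3g-3+n)+n=6g-6+3n$ from the vanishing of $\langle\tau_{b_L}\Gamma_g(\tau)\rangle_g$ above dimension $3g-3+n$, is also sound; to get degree exactly $6g-6+3n$ you should add that the leading coefficients $f^{b}(b,2b+1)=(2b-1)!!$ and the positivity of $\langle\tau_{b_L}\rangle_g$ rule out cancellation in the top-degree part. In short, your route yields a self-contained proof of Chen's theorem from the material of this paper, whereas the paper's route uses Chen's theorem as an external consistency check on Theorem 3.4; the computational content of the two is identical.
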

Then, by equation $(96)$, we have
\begin{align*}
&LHS=-(\tau^2+\tau)^{n-2}\sum_{\substack{b_{i}\geq
0\\i=1,..,n}}\left((n-1)(2\tau+1)\langle\prod_{i=1}^{n}\tau_{b_{i}}\Gamma_{g}(\tau)\rangle_{g}+(\tau^{2}+\tau)
\langle\prod_{i=1}^{n}\tau_{b_{i}}\frac{d}{d\tau}
\Gamma_{g}(\tau)\rangle_{g}\right)\prod_{i=1}^{n}\Psi_{b_{i}}(y_{i};\tau)\\
&-(\tau^{2}+\tau)^{n-1}\sum_{\substack{b_{i}\geq
0\\i=1,2,..,n}}\langle\prod_{i=1}^{n}\tau_{b_{i}}
\Gamma_{g}(\tau)\rangle_{g}\sum_{l=1}^{n}(\frac{\partial}{\partial
\tau}\Psi_{b_{l}}(y_{l};\tau)+\frac{1}{y_{l}\tau+1}\Psi_{b_{l}+1}(y_{l};\tau))\prod_{\substack{k=1\\k\neq
l}}^{n}\Psi_{b_{k}}(y_{k};\tau)
\end{align*}
\begin{align*}
&T_{1}=-\frac{(\tau^2+\tau)^{n-2}}{\tau+1}\sum_{1\leq i<j\leq
n}\sum_{\substack{a\geq 0\\b_{k}\geq 0\\k\neq
i,j}}\langle\tau_{a}\prod_{\substack{k=1\\k\neq
i,j}}^{n}\tau_{b_{k}}\Gamma_{g}(\tau)\rangle_{g}\prod_{\substack{k=1\\k\neq
i,j}}^n\Psi_{b_{k}}(y_{k};\tau)\\
&\times\frac{(y_{j}-1)(y_{i}^2\tau+y_{i})\Psi_{a+1}(y_{i};\tau)-(y_{i}-1)(y_{j}^2\tau+y_{j})\Psi_{a+1}(y_{j};\tau)}
{y_{i}-y_{j}}
\end{align*}
\begin{align*}
&T_{2}=\frac{1}{2}(\tau^2+\tau)^n\sum_{i=1}^{n}\sum_{\substack{a_{1}\geq
0\\a_{2}\geq 0\\b_{k}\geq 0,k\neq
i}}\langle\tau_{a_{1}}\tau_{a_{2}}\prod_{\substack{k=1\\k\neq
i}}^{n}\tau_{b_{k}}\Gamma_{g-1}(\tau)\rangle_{g-1}\Psi_{a_{1}+1}(y_{i};\tau)\Psi_{a_{2}+1}(y_{i};\tau)\prod_{\substack{k=1\\k\neq
i}}^{n}\Psi_{b_{k}}(y_{k};\tau)
\end{align*}
\begin{align*}
&T_{3}=-\frac{(\tau^2+\tau)^{n-1}}{2}\sum_{i=1}^{n}\sum_{\substack{a_{1}\geq 0\\a_{2}\geq 0\\b_{k}\geq 0,k\neq i}}
\sum_{\substack{g_{1}+g_{2}=g\\ \mathcal{I}\coprod\mathcal{J}=\{1,.,\hat{i},.n\}\\2g_{1}-1+|\mathcal{I}|>0\\
2g_{2}-1+|\mathcal{J}|>0}}\langle\tau_{a_{1}}\prod_{k\in
\mathcal{I}}\tau_{b_{k}}\Gamma_{g_{1}}(\tau)\rangle_{g_{1}}\\
&\times\langle\tau_{a_{2}}\prod_{k\in \mathcal{J}}\tau_{b_{k}}
\Gamma_{g_{2}}(\tau)\rangle_{g_{2}}
\Psi_{a_{1}}(y_{i};\tau)\Psi_{a_{2}}(y_{i};\tau)\prod_{\substack{k=1\\k\neq
i}}^{n}\Psi_{b_{i}}(y_{i};\tau)
\end{align*}

Then, by theorem 5.1, we arrive our main results in section 3, i.e.
theorem 3.4.
\subsection{Appendix B}
In this appendix, we will calculate the constant $C(g,l,b_1,..,b_l)$
which appears in theorem 1.5. The computation is verbose and boring.

Let us write the formula $(77)$ of Corollary 5.3 as
\begin{align}
&L_1+L_2+L_3+L_4+L_5+L_6+L_7\\\nonumber
&=R_1+R_2+R_3+R_4+R_5+R_6+R_7+R_8+R_9+R_{10}+R_{11}+R_{12}+R_{13}+R_{14}+R_{15}+R_{16}
\end{align}
where
\begin{align*}
&L_{1}=\sum_{b_{L}\geq
0}\langle\tau_{b_L}\lambda_{g}\rangle_{g}l(g-|b_L|-1)\Psi_{b_L}^{0}(t_{L})\\
&L_{2}=\sum_{b_{L}\geq
0}\langle\tau_{b_L}\lambda_{g}\rangle_{g}l\sum_{j=1}^{l}\Psi_{b_j}^{1}(t_j)\Psi_{b_{L\setminus\{j\}}}^{0}(t_{L\setminus\{j\}})\\
&L_{3}=\sum_{b_{L}\geq
0}\langle\tau_{b_L}\lambda_{g}\rangle_{g}\sum_{j=1}^{l}t_{j}^{2}\frac{\partial}{\partial
t_{j}}\Psi_{b_j}^{0}(t_j)\Psi_{b_{L\setminus
\{j\}}}^{0}(t_{L\setminus\{j\}})\\
&L_{4}=-\sum_{b_{L}\geq
0}\langle\tau_{b_L}\lambda_{g}\rangle_{g}\sum_{j=1}^{l}t_{j}\frac{\partial}{\partial
t_{j}}\Psi_{b_j}^{0}(t_j)\Psi_{b_{L\setminus
\{j\}}}^{0}(t_{L\setminus\{j\}})\\
&L_{5}=-\sum_{b_{L}\geq
0}\langle\tau_{b_L}\lambda_{g-1}\rangle_{g}l\Psi_{b_L}^{0}(t_L)\\
&L_{6}=-\sum_{b_{L}\geq
0}\langle\tau_{b_L}\lambda_{g}\lambda_{1}\rangle_{g}l\Psi_{b_L}^{0}(t_{L})\\
&L_{7}=\sum_{b_{L}\geq
0}\langle\tau_{b_L}\sum_{d=g+2}^{3g-3}P_{d}(\lambda)\rangle_{g}l\Psi_{b_L}^{0}(t_L)
\end{align*}
\begin{align*}
&R_{1}=\sum_{1\leq i<j \leq l}\sum_{\substack{a\geq
0\\b_{L\setminus\{i,j\}}\geq
0}}\langle\tau_{a}\tau_{b_{L\setminus\{i,j\}}}\lambda_{g}
\rangle_{g}\Psi_{b_{L\setminus\{i,j\}}}^{0}(t_{L\setminus\{i,j\}})\frac{t_jt_i\Psi_{a+1}^{1}(t_i)-t_it_j\Psi_{a+1}^{1}(t_j)}{t_{i}-t_j}\\
&R_{2}=\sum_{1\leq i<j \leq l}\sum_{\substack{a\geq
0\\b_{L\setminus\{i,j\}}\geq
0}}\langle\tau_{a}\tau_{b_{L\setminus\{i,j\}}}\lambda_{g}
\rangle_{g}\Psi_{b_{L\setminus\{i,j\}}}^{0}(t_{L\setminus\{i,j\}})\frac{-t_i\Psi_{a+1}^{1}(t_i)+t_j\Psi_{a+1}^{1}(t_j)}{t_{i}-t_j}\\
&R_{3}=\sum_{1\leq i<j \leq l}\sum_{\substack{a\geq
0\\b_{L\setminus\{i,j\}}\geq
0}}\langle\tau_{a}\tau_{b_{L\setminus\{i,j\}}}\lambda_{g}
\rangle_{g}\Psi_{b_{L\setminus\{i,j\}}}^{0}(t_{L\setminus\{i,j\}})\frac{t_jt_{i}^{2}\Psi_{a+1}^{0}(t_i)-t_it_{j}^{2}
\Psi_{a+1}^{0}(t_j)}{t_{i}-t_j}\\
&R_{4}=\sum_{1\leq i<j \leq l}\sum_{\substack{a\geq
0\\b_{L\setminus\{i,j\}}\geq
0}}\langle\tau_{a}\tau_{b_{L\setminus\{i,j\}}}\lambda_{g}
\rangle_{g}\Psi_{b_{L\setminus\{i,j\}}}^{0}(t_{L\setminus\{i,j\}})\frac{-t_{i}^{2}\Psi_{a+1}^{0}(t_i)+t_{j}^{2}\Psi_{a+1}^{0}(t_j)}{t_{i}-t_j}\\
\end{align*}
\begin{align*}
&R_{5}=\sum_{1\leq i<j \leq l}\sum_{\substack{a\geq
0\\b_{L\setminus\{i,j\}}\geq
0}}\langle\tau_{a}\tau_{b_{L\setminus\{i,j\}}}\lambda_{g}
\rangle_{g}\Psi_{b_{L\setminus\{i,j\}}}^{0}(t_{L\setminus\{i,j\}})(|b_{L\setminus \{i,j\}}|+a+3)\frac{-t_jt_i\Psi_{a+1}^{0}(t_i)
+t_it_j\Psi_{a+1}^{0}(t_j)}{t_{i}-t_j}\\
&R_{6}=\sum_{1\leq i<j \leq l}\sum_{\substack{a\geq
0\\b_{L\setminus\{i,j\}}\geq
0}}\langle\tau_{a}\tau_{b_{L\setminus\{i,j\}}}\lambda_{g}
\rangle_{g}\Psi_{b_{L\setminus\{i,j\}}}^{0}(t_{L\setminus\{i,j\}})(|b_{L\setminus
\{i,j\}}|+a+3)\frac{t_i\Psi_{a+1}^{0}(t_i)-t_j\Psi_{a+1}^{0}(t_j)}{t_{i}-t_j}
\end{align*}
\begin{align*}
&R_{7}=\sum_{1\leq i<j \leq l}\sum_{\substack{a\geq
0\\b_{L\setminus\{i,j\}}\geq
0}}\langle\tau_{a}\tau_{b_{L\setminus\{i,j\}}}\lambda_{g}
\rangle_{g}\sum_{r\in
L\setminus\{i,j\}}\Psi_{b_r}^{1}(t_r)\Psi_{b_{L\setminus\{i,j,r\}}}^{0}(t_{L\setminus\{i,j,r\}})\frac{t_jt_i\Psi_{a+1}^{0}(t_i)
-t_it_j\Psi_{a+1}^{0}(t_j)}{t_{i}-t_j}\\
&R_{8}=\sum_{1\leq i<j \leq l}\sum_{\substack{a\geq
0\\b_{L\setminus\{i,j\}}\geq
0}}\langle\tau_{a}\tau_{b_{L\setminus\{i,j\}}}\lambda_{g}
\rangle_{g}\sum_{r\in
L\setminus\{i,j\}}\Psi_{b_r}^{1}(t_r)\Psi_{b_{L\setminus\{i,j,r\}}}^{0}(t_{L\setminus\{i,j,r\}})\frac{-t_i\Psi_{a+1}^{0}(t_i)
+t_j\Psi_{a+1}^{0}(t_j)}{t_{i}-t_j}
\end{align*}
\begin{align*}
&R_{9}=-\sum_{1\leq i<j \leq l}\sum_{\substack{a\geq
0\\b_{L\setminus\{i,j\}}\geq
0}}\langle\tau_{a}\tau_{b_{L\setminus\{i,j\}}}\lambda_{g-1}
\rangle_{g}\Psi_{b_{L\setminus\{i,j\}}}^{0}(t_{L\setminus\{i,j\}})\frac{t_jt_i\Psi_{a+1}^{0}(t_i)
-t_it_j\Psi_{a+1}^{0}(t_j)}{t_{i}-t_j}\\
&R_{10}=-\sum_{1\leq i<j \leq l}\sum_{\substack{a\geq
0\\b_{L\setminus\{i,j\}}\geq
0}}\langle\tau_{a}\tau_{b_{L\setminus\{i,j\}}}\lambda_{g-1}
\rangle_{g}\Psi_{b_{L\setminus\{i,j\}}}^{0}(t_{L\setminus\{i,j\}})\frac{-t_i\Psi_{a+1}^{0}(t_i)
+t_j\Psi_{a+1}^{0}(t_j)}{t_{i}-t_j}
\end{align*}
\begin{align*}
&R_{11}=\sum_{1\leq i<j \leq l}\sum_{\substack{a\geq
0\\b_{L\setminus\{i,j\}}\geq
0}}\langle\tau_{a}\tau_{b_{L\setminus\{i,j\}}}\lambda_{g}
\rangle_{g}g\Psi_{b_{L\setminus\{i,j\}}}^{0}(t_{L\setminus\{i,j\}})\frac{t_jt_i\Psi_{a+1}^{0}(t_i)
-t_it_j\Psi_{a+1}^{0}(t_j)}{t_{i}-t_j}\\
&R_{12}=\sum_{1\leq i<j \leq l}\sum_{\substack{a\geq
0\\b_{L\setminus\{i,j\}}\geq
0}}\langle\tau_{a}\tau_{b_{L\setminus\{i,j\}}}\lambda_{g}
\rangle_{g}g\Psi_{b_{L\setminus\{i,j\}}}^{0}(t_{L\setminus\{i,j\}})\frac{-t_i\Psi_{a+1}^{0}(t_i)
+t_j\Psi_{a+1}^{0}(t_j)}{t_{i}-t_j}
\end{align*}
\begin{align*}
&R_{13}=-\sum_{1\leq i<j \leq l}\sum_{\substack{a\geq
0\\b_{L\setminus\{i,j\}}\geq
0}}\langle\tau_{a}\tau_{b_{L\setminus\{i,j\}}}\lambda_{g}\lambda_{1}
\rangle_{g}\Psi_{b_{L\setminus\{i,j\}}}^{0}(t_{L\setminus\{i,j\}})\frac{t_jt_i\Psi_{a+1}^{0}(t_i)
-t_it_j\Psi_{a+1}^{0}(t_j)}{t_{i}-t_j}\\
&R_{14}=-\sum_{1\leq i<j \leq l}\sum_{\substack{a\geq
0\\b_{L\setminus\{i,j\}}\geq
0}}\langle\tau_{a}\tau_{b_{L\setminus\{i,j\}}}\lambda_{g}\lambda_{1}
\rangle_{g}\Psi_{b_{L\setminus\{i,j\}}}^{0}(t_{L\setminus\{i,j\}})\frac{-t_i\Psi_{a+1}^{0}(t_i)
+t_j\Psi_{a+1}^{0}(t_j)}{t_{i}-t_j}
\end{align*}
\begin{align*}
&R_{15}=\sum_{1\leq i<j \leq l}\sum_{\substack{a\geq
0\\b_{L\setminus\{i,j\}}\geq
0}}\langle\tau_{a}\tau_{b_{L\setminus\{i,j\}}}\sum_{d=g+2}^{3g-3}P_{d}(\lambda)
\rangle_{g}\Psi_{b_{L\setminus\{i,j\}}}^{0}(t_{L\setminus\{i,j\}})\frac{(t_j-1)t_i\Psi_{a+1}^{0}(t_i)
-(t_i-1)t_j\Psi_{a+1}^{0}(t_j)}{t_{i}-t_j}
\end{align*}
\begin{align*}
R_{16}=\frac{1}{2}\sum_{j=1}^{l}\sum_{\substack{a_1\geq 0\\a_2\geq
0\\b_{L\setminus \{j\}}\geq 0}}\sum_{\substack{g_1+g_2=g\\
\mathcal{I}\cup\mathcal{J}=L\setminus\{j\}}}^{stable}\langle\tau_{a_1}\tau_{b_{\mathcal{I}}}\lambda_{g_1}
\rangle_{g_1}\langle\tau_{a_2}\tau_{b_{\mathcal{J}}}\lambda_{g_2}\rangle_{g_2}\Psi_{a_1+1}^{0}(t_j)\Psi_{a_2+1}^{0}(t_j)
\Psi_{b_{L\setminus\{j\}}}^{0}(t_{L\setminus\{j\}})
\end{align*}
Then,
\begin{align}
F_{2g+2l-4}(L_1)=\sum_{|b_{L}|=2g-3+l}\langle\tau_{b_L}\lambda_{g}\rangle_{g}
l(2-g-l)\sum_{j=1}^{l}f^{0}(b_j,b_j)t_{j}^{b_j}f^{0}(b_{L\setminus\{j\}},b_{L\setminus\{j\}}+1)
t_{L\setminus\{j\}}^{b_{L\setminus\{j\}}+1}
\end{align}
\begin{align}
&F_{2g+2l-4}(L_2)=\sum_{|b_{L}|=2g-3+l}\langle\tau_{b_L}\lambda_{g}\rangle_{g}
l\left(\sum_{j=1}^{l}f^{1}(b_j,b_j+1)t_j^{b_j+1}\sum_{k\in
L\setminus\{j\}}f^{0}(b_k,b_k)t_k^{b_k}\right.\\\nonumber
&\left.f^{0}(b_{L\setminus\{j,k\}},b_{L\setminus\{j,k\}}+1)t_{L\setminus\{j,k\}}^{b_{L\setminus\{j,k\}}+1}+\sum_{j=1}^{l}
f^{1}(b_j,b_j)t_j^{b_j}f^{0}(b_{L\setminus\{j\}},b_{L\setminus\{j\}}+1)t_{L\setminus\{j\}}^{b_{L\setminus\{j\}}+1}\right.\\\nonumber
&\left.+\sum_{j=1}^{l}f^{1}(b_j,b_j+2)t_j^{b_j+2}\sum_{k\in
L\setminus\{j\}}f^{0}(b_k,b_k-1)t_{k}^{b_k-1}f^{0}(b_{L\setminus\{j,k\}},b_{L\setminus\{j,k\}}+1)
t_{L\setminus\{j,k\}}^{b_{L\setminus\{j,k\}}+1}\right.\\\nonumber
&\left.+\sum_{j=1}^{l}f^{1}(b_j,b_j+2)t_j^{b_j+2}\sum_{k_1\in
L\setminus\{j\}}f^{0}(b_{k_1},b_{k_1})t_{k_1}^{b_{k_1}}\sum_{k_2\in
L\setminus\{j,k_1\}}f^{0}(b_{k_2},b_{k_2})t_{k_2}^{b_{k_2}}\right.\\\nonumber
&\left.f^{0}(b_{L\setminus
\{j,k_1,k_2\}},b_{L\setminus\{j,k_1,k_2\}}+1)t_{L\setminus\{j,k_1,k_2\}}^{b_{L\setminus\{j,k_1,k_2\}}+1}
\right)
\end{align}
\begin{align}
&F_{2g+2l-4}(L_3)=\sum_{|b_{L}|=2g-3+l}\langle\tau_{b_L}\lambda_{g}\rangle_{g}
\left(\sum_{j=1}^{l}b_jf^{0}(b_j,b_j)t_j^{b_j+1}\sum_{k\in
L\setminus\{j\}}f^{0}(b_k,b_k)t_k^{b_k}\right.\\\nonumber
&\left.f^{0}(b_{L\setminus\{j,k\}},b_{L\setminus\{j,k\}}+1)t_{L\setminus\{j,k\}}^{b_{L\setminus\{j,k\}}+1}+\sum_{j=1}^{l}
(b_j-1)f^{0}(b_j,b_j-1)t_j^{b_j}f^{0}(b_{L\setminus\{j\}},b_{L\setminus\{j\}}+1)t_{L\setminus\{j\}}^{b_{L\setminus\{j\}}+1}\right.\\\nonumber
&\left.+\sum_{j=1}^{l}(b_j+1)f^{0}(b_j,b_j+1)t_j^{b_j+2}\sum_{k\in
L\setminus\{j\}}f^{0}(b_k,b_k-1)t_{k}^{b_k-1}f^{0}(b_{L\setminus\{j,k\}},b_{L\setminus\{j,k\}}+1)
t_{L\setminus\{j,k\}}^{b_{L\setminus\{j,k\}}+1}\right.\\\nonumber
&\left.+\sum_{j=1}^{l}(b_j+1)f^{0}(b_j,b_j+1)t_j^{b_j+2}\sum_{k_1\in
L\setminus\{j\}}f^{0}(b_{k_1},b_{k_1})t_{k_1}^{b_{k_1}}\sum_{k_2\in
L\setminus\{j,k_1\}}f^{0}(b_{k_2},b_{k_2})t_{k_2}^{b_{k_2}}\right.\\\nonumber
&\left.f^{0}(b_{L\setminus
\{j,k_1,k_2\}},b_{L\setminus\{j,k_1,k_2\}}+1)t_{L\setminus\{j,k_1,k_2\}}^{b_{L\setminus\{j,k_1,k_2\}}+1}
\right)
\end{align}
\begin{align}
&F_{2g+2l-4}(L_4)=-\sum_{|b_{L}|=2g-3+l}\langle\tau_{b_L}\lambda_{g}\rangle_{g}
\left(\sum_{j=1}^{l}b_jf^{0}(b_j,b_j)t_j^{b_j}f^{0}(b_{L\setminus\{j\}},b_{L\setminus\{j\}}+1)t_{L\setminus\{j\}}^{b_{L\setminus
\{j\}}+1}\right.\\\nonumber
&\left.+\sum_{j=1}^{l}(b_j+1)f^{0}(b_j,b_j+1)t_j^{b_j+1}\sum_{k\in
L\setminus\{j\}}f^{0}(b_k,b_k)t_k^{b_k}f^{0}(b_{L\setminus\{j,k\}},b_{L\setminus\{j,k\}}+1)t_{L\setminus\{j,k\}}^{b_{L\setminus\{j,k\}}+1}
\right)
\end{align}
\begin{align}
&F_{2g+2l-4}(L_5)=-\sum_{|b_{L}|=2g-2+l}\langle\tau_{b_L}\lambda_{g-1}\rangle_{g}
l\left(\sum_{j=1}^{l}f^{0}(b_j,b_j-1)t_j^{b_j-1}f^{0}(b_{L\setminus\{j\}},b_{L\setminus\{j\}}+1)t_{L\setminus\{j\}}^{b_{L\setminus
\{j\}}+1}\right.\\\nonumber
&\left.+\sum_{j=1}^{l}f^{0}(b_j,b_j)t_j^{b_j}\sum_{k\in
L\setminus\{j\}}f^{0}(b_k,b_k)t_k^{b_k}f^{0}(b_{L\setminus\{j,k\}},b_{L\setminus\{j,k\}}+1)t_{L\setminus\{j,k\}}^{b_{L\setminus\{j,k\}}+1}
\right)
\end{align}
\begin{align}
F_{2g+2l-4}(L_6)=-\sum_{|b_{L}|=2g-4+l}\langle\tau_{b_L}\lambda_{g}\lambda_{1}\rangle_{g}
l f^{0}(b_L,b_L+1)t_{L}^{b_{L}+1}
\end{align}
\begin{align}
F_{2g+2l-4}(L_7)=0
\end{align}

\begin{align}
&F_{2g+2l-4}(R_1)=\sum_{1\leq i<j\leq
l}\sum_{a+|b_{L\setminus\{i,j\}}|=2g+l-4}\langle\tau_{a}\tau_{b_{L\setminus\{i,j\}}}\lambda_{g}
\rangle_{g}\left(f^{0}(b_{L\setminus
\{i,j\}},b_{L\setminus\{i,j\}}+1)t^{b_{L\setminus\{i,j\}}+1}_{L\setminus\{i,j\}}\right.\\\nonumber
&\left.f^{1}(a+1,a+1)\sum_{k\geq
0}^{a}t_{i}^{k+1}t_{j}^{a+1-k}+\sum_{r\in
L\setminus\{i,j\}}f^{0}(b_r,b_r)t_r^{b_r}f^{0}(b_{L\setminus\{i,j,r\}},b_{L\setminus\{i,j,r\}}+1)
t_{L\setminus\{i,j,r\}}^{b_{L\setminus\{i,j,r\}}+1}\right.\\\nonumber
&\left.f^{1}(a+1,a+2)\sum_{k\geq
0}^{a+1}t_i^{k+1}t_{j}^{a+2-k}+\sum_{r\in
L\setminus\{i,j\}}f^{0}(b_r,b_r-1)t_r^{b_r-1}f^{0}(b_{L\setminus\{i,j,r\}},b_{L\setminus\{i,j,r\}}+1)
t_{L\setminus\{i,j,r\}}^{b_{L\setminus\{i,j,r\}}+1}\right.\\\nonumber
&\left.f^{1}(a+1,a+3)\sum_{k\geq
0}^{a+2}t_i^{k+1}t_{j}^{a+3-k}+\sum_{r_1\in L\in
L\setminus\{i,j\}}f^{0}(b_{r_1},b_{r_1})t_{r_1}^{b_{r_1}}\sum_{r_2\in
L\setminus\{i,j,r_1\}}f^{0}(b_{r_2},b_{r_2})t_{r_2}^{b_{r_2}}\right.\\\nonumber
&\left.f^{0}(b_{L\setminus\{i,j,r_1,r_2\}},b_{L\setminus\{i,j,r_1,r_2\}}+1)t_{L\setminus\{i,j,r_1,r_2\}}^{b_{L\setminus\{i,j,r_1,r_2\}}+1}
f^{1}(a+1,a+3)\sum_{k\geq 0}^{a+2}t_i^{k+1}t_j^{a+3-k} \right)
\end{align}

\begin{align}
&F_{2g+2l-4}(R_2)=-\sum_{1\leq i<j\leq
l}\sum_{a+|b_{L\setminus\{i,j\}}|=2g+l-4}\langle\tau_{a}\tau_{b_{L\setminus\{i,j\}}}\lambda_{g}
\rangle_{g}\left(f^{0}(b_{L\setminus
\{i,j\}},b_{L\setminus\{i,j\}}+1)t^{b_{L\setminus\{i,j\}}+1}_{L\setminus\{i,j\}}\right.\\\nonumber
&\left.f^{1}(a+1,a+2)\sum_{k\geq
0}^{a+2}t_{i}^{k}t_{j}^{a+2-k}+\sum_{r\in
L\setminus\{i,j\}}f^{0}(b_r,b_r)t_r^{b_r}f^{0}(b_{L\setminus\{i,j,r\}},b_{L\setminus\{i,j,r\}}+1)
t_{L\setminus\{i,j,r\}}^{b_{L\setminus\{i,j,r\}}+1}\right.\\\nonumber
&\left.f^{1}(a+1,a+3)\sum_{k\geq 0}^{a+3}t_i^{k}t_{j}^{a+3-k}\right)
\end{align}

\begin{align}
&F_{2g+2l-4}(R_3)=\sum_{1\leq i<j\leq
l}\sum_{a+|b_{L\setminus\{i,j\}}|=2g+l-4}\langle\tau_{a}\tau_{b_{L\setminus\{i,j\}}}\lambda_{g}
\rangle_{g}\left(f^{0}(b_{L\setminus
\{i,j\}},b_{L\setminus\{i,j\}}+1)t^{b_{L\setminus\{i,j\}}+1}_{L\setminus\{i,j\}}\right.\\\nonumber
&\left.f^{0}(a+1,a)\sum_{k\geq
0}^{a}t_{i}^{k+1}t_{j}^{a+1-k}+\sum_{r\in
L\setminus\{i,j\}}f^{0}(b_r,b_r)t_r^{b_r}f^{0}(b_{L\setminus\{i,j,r\}},b_{L\setminus\{i,j,r\}}+1)
t_{L\setminus\{i,j,r\}}^{b_{L\setminus\{i,j,r\}}+1}\right.\\\nonumber
&\left.f^{0}(a+1,a+1)\sum_{k\geq
0}^{a+1}t_i^{k+1}t_{j}^{a+2-k}+\sum_{r\in
L\setminus\{i,j\}}f^{0}(b_r,b_r-1)t_r^{b_r-1}f^{0}(b_{L\setminus\{i,j,r\}},b_{L\setminus\{i,j,r\}}+1)
t_{L\setminus\{i,j,r\}}^{b_{L\setminus\{i,j,r\}}+1}\right.\\\nonumber
&\left.f^{0}(a+1,a+2)\sum_{k\geq
0}^{a+2}t_i^{k+1}t_{j}^{a+3-k}+\sum_{r_1\in L\in
L\setminus\{i,j\}}f^{0}(b_{r_1},b_{r_1})t_{r_1}^{b_{r_1}}\sum_{r_2\in
L\setminus\{i,j,r_1\}}f^{0}(b_{r_2},b_{r_2})t_{r_2}^{b_{r_2}}\right.\\\nonumber
&\left.f^{0}(b_{L\setminus\{i,j,r_1,r_2\}},b_{L\setminus\{i,j,r_1,r_2\}}+1)t_{L\setminus\{i,j,r_1,r_2\}}^{b_{L\setminus\{i,j,r_1,r_2\}}+1}
f^{0}(a+1,a+2)\sum_{k\geq 0}^{a+2}t_i^{k+1}t_j^{a+3-k} \right)
\end{align}

\begin{align}
&F_{2g+2l-4}(R_4)=-\sum_{1\leq i<j\leq
l}\sum_{a+|b_{L\setminus\{i,j\}}|=2g+l-4}\langle\tau_{a}\tau_{b_{L\setminus\{i,j\}}}\lambda_{g}
\rangle_{g}\left(f^{0}(b_{L\setminus
\{i,j\}},b_{L\setminus\{i,j\}}+1)t^{b_{L\setminus\{i,j\}}+1}_{L\setminus\{i,j\}}\right.\\\nonumber
&\left.f^{0}(a+1,a+1)\sum_{k\geq
0}^{a+2}t_{i}^{k}t_{j}^{a+2-k}+\sum_{r\in
L\setminus\{i,j\}}f^{0}(b_r,b_r)t_r^{b_r}f^{0}(b_{L\setminus\{i,j,r\}},b_{L\setminus\{i,j,r\}}+1)
t_{L\setminus\{i,j,r\}}^{b_{L\setminus\{i,j,r\}}+1}\right.\\\nonumber
&\left.f^{0}(a+1,a+2)\sum_{k\geq 0}^{a+3}t_i^{k}t_{j}^{a+3-k}\right)
\end{align}

\begin{align}
&F_{2g+2l-4}(R_5)=-\sum_{1\leq i<j\leq
l}\sum_{a+|b_{L\setminus\{i,j\}}|=2g+l-4}\langle\tau_{a}\tau_{b_{L\setminus\{i,j\}}}\lambda_{g}
\rangle_{g}\left((2g+l-1)f^{0}(b_{L\setminus
\{i,j\}},b_{L\setminus\{i,j\}}+1)t^{b_{L\setminus\{i,j\}}+1}_{L\setminus\{i,j\}}\right.\\\nonumber
&\left.f^{0}(a+1,a+1)\sum_{k\geq
0}^{a}t_{i}^{k+1}t_{j}^{a+1-k}+(2g+l-1)\sum_{r\in
L\setminus\{i,j\}}f^{0}(b_r,b_r)t_r^{b_r}f^{0}(b_{L\setminus\{i,j,r\}},b_{L\setminus\{i,j,r\}}+1)
t_{L\setminus\{i,j,r\}}^{b_{L\setminus\{i,j,r\}}+1}\right.\\\nonumber
&\left.f^{0}(a+1,a+2)\sum_{k\geq
0}^{a+1}t_i^{k+1}t_{j}^{a+2-k}\right)
\end{align}

\begin{align}
&F_{2g+2l-4}(R_6)=\sum_{1\leq i<j\leq
l}\sum_{a+|b_{L\setminus\{i,j\}}|=2g+l-4}\langle\tau_{a}\tau_{b_{L\setminus\{i,j\}}}\lambda_{g}
\rangle_{g}\left((2g+l-1)f^{0}(b_{L\setminus
\{i,j\}},b_{L\setminus\{i,j\}}+1)t^{b_{L\setminus\{i,j\}}+1}_{L\setminus\{i,j\}}\right.\\\nonumber
&\left.f^{0}(a+1,a+2)\sum_{k\geq
0}^{a+2}t_{i}^{k}t_{j}^{a+2-k}\right)
\end{align}

\begin{align}
&F_{2g+2l-4}(R_7)=\sum_{1\leq i<j\leq
l}\sum_{a+|b_{L\setminus\{i,j\}}|=2g+l-4}\langle\tau_{a}\tau_{b_{L\setminus\{i,j\}}}\lambda_{g}
\rangle_{g}\left(\sum_{r\in
L\setminus\{i,j\}}f^{1}(b_r,b_r+2)t_r^{b_r+2}\right.\\\nonumber
&\left.f^{0}(b_{L\setminus
\{i,j,r\}},b_{L\setminus\{i,j,r\}}+1)t^{b_{L\setminus\{i,j,r\}}+1}_{L\setminus\{i,j,r\}}f^{0}(a+1,a)\sum_{k\geq
0}^{a-1}t_{i}^{k+1}t_{j}^{a-k}+\sum_{r\in
L\setminus\{i,j\}}f^{1}(b_r,b_r+2)t_r^{b_r+2}\right.\\\nonumber
&\left.\sum_{s\in
L\setminus\{i,j,r\}}f^{0}(b_s,b_s-1)t_s^{b_s-1}f^{0}(b_{L\setminus\{i,j,r,s\}},b_{L\setminus\{i,j,r,s\}}+1)
t_{L\setminus\{i,j,r,s\}}^{b_{L\setminus\{i,j,r,s\}}+1}f^{0}(a+1,a+2)\sum_{k\geq
0}^{a+1}t_i^{k+1}t_{j}^{a+2-k}\right.\\\nonumber &\left.+\sum_{r\in
L\setminus\{i,j\}}f^{1}(b_r,b_r+2)t_r^{b_r+2}\sum_{s_1\in
L\setminus\{i,j,r\}}f^{0}(b_{s_1},b_{s_1})t_{s_1}^{b_{s_1}}\sum_{s_2\in
L\setminus\{i,j,r_1,s_1\}}f^{0}(b_{s_2},b_{s_2})t_{s_2}^{b_{s_2}}\right.\\\nonumber
&\left.f^{0}(b_{L\setminus\{i,j,r,s_1,s_2\}},b_{L\setminus\{i,j,r,s_1,s_2\}}+1)
t_{L\setminus\{i,j,r,s_1,s_2\}}^{b_{L\setminus\{i,j,r,s_1,s_2\}}+1}f^{0}(a+1,a+2)\sum_{k\geq
0}^{a+1}t_i^{k+1}t_{j}^{a+2-k}+\sum_{r\in
L\setminus\{i,j\}}f^{1}(b_r,b_r)t_r^{b_r}\right.\\\nonumber
&\left.f^{0}(b_{L\setminus\{i,j,r\}},b_{L\setminus\{i,j,r\}}+1)
t_{L\setminus\{i,j,r\}}^{b_{L\setminus\{i,j,r\}}+1}f^{0}(a+1,a+2)\sum_{k\geq
0}^{a+1}t_i^{k+1}t_{j}^{a+2-k}+\sum_{r_1\in
L\setminus\{i,j\}}f^{1}(b_{r_1},b_{r_1}+1)t_{r_1}^{b_{r_1}+1}\right.\\\nonumber
&\left.\sum_{r_2\in
L\setminus\{i,j,r_1\}}f^{0}(b_{r_2},b_{r_2})t_{r_2}^{b_{r_2}}f^{0}(b_{L\setminus\{i,j,r_1,r_2\}},b_{L\setminus\{i,j,r_1,r_2\}}+1)
t_{L\setminus\{i,j,r_1,r_2\}}^{b_{L\setminus\{i,j,r_1,r_2\}}+1}f^{0}(a+1,a+2)\right.\\\nonumber
&\left.\sum_{k\geq 0}^{a+1}t_{i}^{k+1}t_{j}^{a+2-k}+\sum_{r\in
\{i,j\}}f^{1}(b_r,b_r+1)t_r^{b_r+1}f^{0}(b_{L\setminus\{i,j,r\}},b_{L\setminus\{i,j,r\}}+1)
t_{L\setminus\{i,j,r\}}^{b_{L\setminus\{i,j,r\}}+1}f^{0}(a+1,a+1)\right.\\\nonumber
&\left.\sum_{k\geq 0}^{a}t_{i}^{k+1}t_{j}^{a+1-k}+\sum_{r_1\in
L\setminus\{i,j\}}f^{1}(b_{r_1},b_{r_1}+2)t_{r_1}^{b_{r_1}+2}\sum_{r_2\in
L\setminus\{i,j,r_1\}}f^{0}(b_{r_2},b_{r_2})t_{r_2}^{b_{r_2}}\right.\\\nonumber
&\left.f^{0}(b_{L\setminus\{i,j,r_1,r_2\}},b_{L\setminus\{i,j,r_1,r_2\}}+1)t_{L\setminus\{i,j,r_1,r_2\}}^{b_{L\setminus\{i,j,r_1,r_2\}}+1}
f^{0}(a+1,a+1)\sum_{k\geq 0}^{a}t_{i}^{k+1}t_{j}^{a+1-k}\right)
\end{align}

\begin{align}
&F_{2g+2l-4}(R_8)=-\sum_{1\leq i<j\leq
l}\sum_{a+|b_{L\setminus\{i,j\}}|=2g+l-4}\langle\tau_{a}\tau_{b_{L\setminus\{i,j\}}}\lambda_{g}
\rangle_{g}\left(\sum_{r\in
L\setminus\{i,j\}}f^{1}(b_r,b_r+2)t_{r}^{b_r+2}\right.\\\nonumber
&\left.f^{0}(b_{L\setminus
\{i,j,r\}},b_{L\setminus\{i,j,r\}}+1)t^{b_{L\setminus\{i,j,r\}}+1}_{L\setminus\{i,j,r\}}f^{0}(a+1,a+1)\sum_{k\geq
0}^{a+1}t_{i}^{k}t_{j}^{a+1-k}+\sum_{r\in
L\setminus\{i,j\}}f^{1}(b_r,b_r+1)t_r^{b_r+1}\right.\\\nonumber
&\left.f^{0}(b_{L\setminus\{i,j,r\}},b_{L\setminus\{i,j,r\}}+1)
t_{L\setminus\{i,j,r\}}^{b_{L\setminus\{i,j,r\}}+1}f^{0}(a+1,a+2)\sum_{k\geq
0}^{a+2}t_i^{k}t_{j}^{a+2-k}+\sum_{r\in
L\setminus\{i,j\}}f^{1}(b_r,b_r+2)t_r^{b_r+2}\right.\\\nonumber
&\left.\sum_{s\in
L\setminus\{i,j,r\}}f^{0}(b_{s},b_s)t_s^{b_s}f^{0}(b_{L\setminus\{i,j,r,s\}},b_{L\setminus\{i,j,r,s\}}+1)
t_{L\setminus\{i,j,r,s\}}^{b_{L\setminus\{i,j,r,s\}}+1}f^{0}(a+1,a+2)\sum_{k\geq
0}^{a+2}t_{i}^{k}t_{j}^{a+2-k}\right)
\end{align}

\begin{align}
&F_{2g+2l-4}(R_9)=-\sum_{1\leq i<j\leq
l}\sum_{a+|b_{L\setminus\{i,j\}}|=2g+l-3}\langle\tau_{a}\tau_{b_{L\setminus\{i,j\}}}\lambda_{g-1}
\rangle_{g}\left(f^{0}(b_{L\setminus
\{i,j\}},b_{L\setminus\{i,j\}}+1)t^{b_{L\setminus\{i,j\}}+1}_{L\setminus\{i,j\}}\right.\\\nonumber
&\left.f^{0}(a+1,a)\sum_{k\geq
0}^{a-1}t_{i}^{k+1}t_{j}^{a-k}+\sum_{r\in
L\setminus\{i,j\}}f^{0}(b_r,b_r)t_r^{b_r}f^{0}(b_{L\setminus\{i,j,r\}},b_{L\setminus\{i,j,r\}}+1)
t_{L\setminus\{i,j,r\}}^{b_{L\setminus\{i,j,r\}}+1}\right.\\\nonumber
&\left.f^{0}(a+1,a+1)\sum_{k\geq
0}^{a}t_i^{k+1}t_{j}^{a+1-k}+\sum_{r\in
L\setminus\{i,j\}}f^{0}(b_r,b_r-1)t_r^{b_r-1}f^{0}(b_{L\setminus\{i,j,r\}},b_{L\setminus\{i,j,r\}}+1)
t_{L\setminus\{i,j,r\}}^{b_{L\setminus\{i,j,r\}}+1}\right.\\\nonumber
&\left.f^{0}(a+1,a+2)\sum_{k\geq
0}^{a+1}t_i^{k+1}t_{j}^{a+2-k}+\sum_{r_1\in L\in
L\setminus\{i,j\}}f^{0}(b_{r_1},b_{r_1})t_{r_1}^{b_{r_1}}\sum_{r_2\in
L\setminus\{i,j,r_1\}}f^{0}(b_{r_2},b_{r_2})t_{r_2}^{b_{r_2}}\right.\\\nonumber
&\left.f^{0}(b_{L\setminus\{i,j,r_1,r_2\}},b_{L\setminus\{i,j,r_1,r_2\}}+1)t_{L\setminus\{i,j,r_1,r_2\}}^{b_{L\setminus\{i,j,r_1,r_2\}}+1}
f^{0}(a+1,a+2)\sum_{k\geq 0}^{a+1}t_i^{k+1}t_j^{a+2-k} \right)
\end{align}

\begin{align}
&F_{2g+2l-4}(R_{10})=\sum_{1\leq i<j\leq
l}\sum_{a+|b_{L\setminus\{i,j\}}|=2g+l-3}\langle\tau_{a}\tau_{b_{L\setminus\{i,j\}}}\lambda_{g-1}
\rangle_{g}\left(f^{0}(b_{L\setminus
\{i,j\}},b_{L\setminus\{i,j\}}+1)t^{b_{L\setminus\{i,j\}}+1}_{L\setminus\{i,j\}}\right.\\\nonumber
&\left.f^{0}(a+1,a+1)\sum_{k\geq
0}^{a+1}t_{i}^{k}t_{j}^{a+1-k}+\sum_{r\in
L\setminus\{i,j\}}f^{0}(b_r,b_r)t_r^{b_r}f^{0}(b_{L\setminus\{i,j,r\}},b_{L\setminus\{i,j,r\}}+1)
t_{L\setminus\{i,j,r\}}^{b_{L\setminus\{i,j,r\}}+1}\right.\\\nonumber
&\left.f^{0}(a+1,a+2)\sum_{k\geq 0}^{a+2}t_i^{k}t_{j}^{a+2-k}\right)
\end{align}

\begin{align}
&F_{2g+2l-4}(R_{11})=\sum_{1\leq i<j\leq
l}\sum_{a+|b_{L\setminus\{i,j\}}|=2g+l-4}\langle\tau_{a}\tau_{b_{L\setminus\{i,j\}}}\lambda_{g}
\rangle_{g}g\left(f^{0}(b_{L\setminus
\{i,j\}},b_{L\setminus\{i,j\}}+1)t^{b_{L\setminus\{i,j\}}+1}_{L\setminus\{i,j\}}\right.\\\nonumber
&\left.f^{0}(a+1,a+1)\sum_{k\geq
0}^{a}t_{i}^{k+1}t_{j}^{a+1-k}+\sum_{r\in
L\setminus\{i,j\}}f^{0}(b_r,b_r)t_r^{b_r}f^{0}(b_{L\setminus\{i,j,r\}},b_{L\setminus\{i,j,r\}}+1)
t_{L\setminus\{i,j,r\}}^{b_{L\setminus\{i,j,r\}}+1}\right.\\\nonumber
&\left.f^{0}(a+1,a+2)\sum_{k\geq 0}^{a+1}t_i^{k}t_{j}^{a+2-k}\right)
\end{align}

\begin{align}
&F_{2g+2l-4}(R_{12})=-\sum_{1\leq i<j\leq
l}\sum_{a+|b_{L\setminus\{i,j\}}|=2g+l-4}\langle\tau_{a}\tau_{b_{L\setminus\{i,j\}}}\lambda_{g}
\rangle_{g}g\left(f^{0}(b_{L\setminus
\{i,j\}},b_{L\setminus\{i,j\}}+1)t^{b_{L\setminus\{i,j\}}+1}_{L\setminus\{i,j\}}\right.\\\nonumber
&\left.f^{0}(a+1,a+2)\sum_{k\geq
0}^{a+2}t_{i}^{k}t_{j}^{a+2-k}\right)
\end{align}

\begin{align}
&F_{2g+2l-4}(R_{13})=-\sum_{1\leq i<j\leq
l}\sum_{a+|b_{L\setminus\{i,j\}}|=2g+l-5}\langle\tau_{a}\tau_{b_{L\setminus\{i,j\}}}\lambda_{g}\lambda_{1}
\rangle_{g}\left(f^{0}(b_{L\setminus
\{i,j\}},b_{L\setminus\{i,j\}}+1)t^{b_{L\setminus\{i,j\}}+1}_{L\setminus\{i,j\}}\right.\\\nonumber
&\left.f^{0}(a+1,a+2)\sum_{k\geq
0}^{a+1}t_{i}^{k+1}t_{j}^{a+2-k}\right)
\end{align}

\begin{align}
F_{2g+2l-4}(R_{14})=F_{2g+2l-4}(R_{15})=0
\end{align}

\begin{align}
&F_{2g+2l-4}(R_{16})=\frac{1}{2}\sum_{j=1}^{l}\sum_{\substack{g_1+g_2=g\\
\mathcal{I}\cup\mathcal{J}=L\setminus\{j\}}}^{stable}\sum_{\substack{a_1+|b_{\mathcal{I}}|=2g_1-2+|\mathcal{I}|\\
a_2+|b_{\mathcal{J}}|=2g_2-2+|\mathcal{J}|}}\langle\tau_{a_1}\tau_{b_{\mathcal{I}}}\lambda_{g_1}
\rangle_{g_1}\langle\tau_{a_2}\tau_{b_{\mathcal{J}}}\lambda_{g_2}\rangle_{g_2}\\\nonumber
&f^{0}(a_1+1,a_1+2)f^{0}(a_2+1,a_2+2)
t_{j}^{a_1+a_2+4}f^{0}(b_{L\setminus\{j\}},b_{L\setminus\{j\}}+1)t_{L\setminus\{j\}}^{b_{L\setminus\{j\}}+1}
\end{align}

\begin{align}
[t_{L}^{b_{L}+1}]F_{2g+2l-4}(L_{1})=\sum_{j=1}^{l}\langle\tau_{b_{j}+1}\tau_{b_{L\setminus\{j\}}}\lambda_{g}
\rangle_{g}l(2-g-l)f^{0}(b_{j}+1,b_{j}+1)f^{0}(b_{L\setminus\{j\}},b_{L\setminus\{j\}}+1)
\end{align}

\begin{align}
&[t_{L}^{b_{L}+1}]F_{2g+2l-4}(L_2)=\sum_{j=1}^{l}\sum_{k\in
L\setminus\{j\}}\langle\tau_{b_j}\tau_{b_{k}+1}\tau_{b_{L\setminus\{j,k\}}}\lambda_{g}
\rangle_{g}lf^{1}(b_{j},b_{j}+1)f^{0}(b_{k}+1,b_{k}+1)\\\nonumber
&f^{0}(b_{L\setminus\{j,k\}},b_{L\setminus\{j,k\}}+1)+\sum_{j=1}^{l}\langle\tau_{b_j+1}\tau_{b_{L\setminus\{j\}}}\lambda_{g}
\rangle_{g}lf^{1}(b_{j}-1,b_{j}+1)f^{0}(b_{L\setminus\{j\}},b_{L\setminus\{j\}}+1)\\\nonumber
&+\sum_{j=1}^{l}\sum_{k\in
L\setminus\{j\}}\langle\tau_{b_j-1}\tau_{b_k+2}\tau_{b_{L\setminus\{j,k\}}}\lambda_{g}
\rangle_{g}lf^{1}(b_{j}-1,b_{j}+1)f^{0}(b_k+2,b_k+1)f^{0}(b_{L\setminus\{j,k\}},b_{L\setminus\{j,k\}}+1)\\\nonumber
&+\sum_{j=1}^{l}\sum_{k_1\in L\setminus\{j\}}\sum_{k_2\in
L\setminus\{j,k_1\}}\langle\tau_{b_j-1}\tau_{b_{k_1}+1}\tau_{b_{k_2}+1}\tau_{b_{L\setminus\{j,k_1,k_2\}}}\lambda_{g}
\rangle_{g}lf^{1}(b_{j}-1,b_{j}+1)\\\nonumber
&f^{0}(b_{k_1}+1,b_{k_{1}}+1)f^{0}(b_{k_2}+1,b_{k_2}+1)
f^{0}(b_{L\setminus\{j,k_1,k_2\}},b_{L\setminus\{j,k_1,k_2\}}+1)
\end{align}

\begin{align}
&[t_{L}^{b_{L}+1}]F_{2g+2l-4}(L_3)=\sum_{j=1}^{l}\sum_{k\in
L\setminus\{j\}}\langle\tau_{b_j}\tau_{b_{k}+1}\tau_{b_{L\setminus\{j,k\}}}\lambda_{g}
\rangle_{g}b_jf^{0}(b_{j},b_{j})f^{0}(b_{k}+1,b_{k}+1)\\\nonumber
&f^{0}(b_{L\setminus\{j,k\}},b_{L\setminus\{j,k\}}+1)+\sum_{j=1}^{l}\langle\tau_{b_j+1}\tau_{b_{L\setminus\{j\}}}\lambda_{g}
\rangle_{g}b_jf^{0}(b_{j}+1,b_{j})f^{0}(b_{L\setminus\{j\}},b_{L\setminus\{j\}}+1)\\\nonumber
&+\sum_{j=1}^{l}\sum_{k\in
L\setminus\{j\}}\langle\tau_{b_j-1}\tau_{b_k+2}\tau_{b_{L\setminus\{j,k\}}}\lambda_{g}
\rangle_{g}b_jf^{0}(b_{j}-1,b_{j})f^{0}(b_k+2,b_k+1)f^{0}(b_{L\setminus\{j,k\}},b_{L\setminus\{j,k\}}+1)\\\nonumber
&+\sum_{j=1}^{l}\sum_{k_1\in L\setminus\{j\}}\sum_{k_2\in
L\setminus\{j,k_1\}}\langle\tau_{b_j-1}\tau_{b_{k_1}+1}\tau_{b_{k_2}+1}\tau_{b_{L\setminus\{j,k_1,k_2\}}}\lambda_{g}
\rangle_{g}b_jf^{0}(b_{j}-1,b_{j})\\\nonumber
&f^{0}(b_{k_1}+1,b_{k_{1}}+1)f^{0}(b_{k_2}+1,b_{k_2}+1)
f^{0}(b_{L\setminus\{j,k_1,k_2\}},b_{L\setminus\{j,k_1,k_2\}}+1)
\end{align}

\begin{align}
&[t_{L}^{b_{L}+1}]F_{2g+2l-4}(L_4)=-\sum_{j=1}^{l}\langle\tau_{b_j+1}\tau_{b_{L\setminus\{j\}}}\lambda_{g}
\rangle_{g}(b_j+1)f^{0}(b_j+1,b_j+1)f^{0}(b_{L\setminus\{j\}},b_{L\setminus\{j\}}+1)\\\nonumber
&-\sum_{j=1}^{l}\sum_{k\in
L\setminus\{j\}}\langle\tau_{b_j}\tau_{b_k+1}\tau_{b_{L\setminus\{j,k\}}}\lambda_{g}
\rangle_{g}(b_j+1)f^{0}(b_j,b_j+1)f^{0}(b_k+1,b_k+1)f^{0}(b_{L\setminus\{j,k\}},b_{L\setminus\{j,k\}}+1)
\end{align}

\begin{align}
&[t_{L}^{b_{L}+1}]F_{2g+2l-4}(L_5)=-\sum_{j=1}^{l}\langle\tau_{b_j+2}\tau_{b_{L\setminus\{j\}}}\lambda_{g-1}
\rangle_{g}lf^{0}(b_j+2,b_j+1)f^{0}(b_{L\setminus\{j\}},b_{L\setminus\{j\}}+1)\\\nonumber
&-\sum_{j=1}^{l}\sum_{k\in
L\setminus\{j\}}\langle\tau_{b_j+1}\tau_{b_k+1}\tau_{b_{L\setminus\{j,k\}}}\lambda_{g-1}
\rangle_{g}lf^{0}(b_j+1,b_j+1)f^{0}(b_k+1,b_k+1)f^{0}(b_{L\setminus\{j,k\}},b_{L\setminus\{j,k\}}+1)
\end{align}

\begin{align}
[t_{L}^{b_{L}+1}]F_{2g+2l-4}(L_6)=-\langle\tau_{b_L}\lambda_{g}\lambda_{1}
\rangle_{g}lf^{0}(b_L,b_{L}+1)
\end{align}

on the right hand side,

\begin{align}
&[t_{L}^{b_{L}+1}]F_{2g+2l-4}(R_1)=\sum_{1\leq i<j\leq
l}\langle\tau_{b_i+b_j}\tau_{b_{L\setminus\{i,j\}}}\lambda_{g}
\rangle_{g}f^{0}(b_{L\setminus\{i,j\}},b_{L\setminus\{i,j\}}+1)f^{1}(b_i+b_j+1,b_i+b_j+1)\\\nonumber
&+\sum_{1\leq i<j\leq l}\sum_{r\in
L\setminus\{i,j\}}\langle\tau_{b_i+b_j-1}\tau_{b_r+1}\tau_{b_{L\setminus\{i,j,r\}}}\lambda_{g}
\rangle_{g}f^{1}(b_i+b_j,b_i+b_j+1)f^{0}(b_{L\setminus\{i,j,r\}},b_{L\setminus\{i,j,r\}}+1)\\\nonumber
&f^{0}(b_r+1,b_r+1)+\sum_{1\leq i<j\leq l}\sum_{r\in
L\setminus\{i,j\}}\langle\tau_{b_i+b_j-2}\tau_{b_r+2}\tau_{b_{L\setminus\{i,j,r\}}}\lambda_{g}
\rangle_{g}f^{1}(b_i+b_j-1,b_i+b_j+1)f^{0}(b_r+2,b_r+1)\\\nonumber
&f^{0}(b_{L\setminus\{i,j,r\}},b_{L\setminus\{i,j,r\}}+1)+\sum_{1\leq
i<j\leq l}\sum_{r_1\in L\setminus\{i,j\}}\sum_{r_2\in
L\setminus\{i,j,r_1\}}\langle\tau_{b_i+b_j-2}\tau_{b_{r_1}+1}\tau_{b_{r_2}+1}\tau_{b_{L\setminus\{i,j,r_1,r_2\}}}
\lambda_{g}\rangle_{g}\\\nonumber
&f^{0}(b_{r_1}+1,b_{r_1}+1)f^{0}(b_{r_2}+1,b_{r_2}+1)
f^{0}(b_{L\setminus\{i,j,r_1,r_2\}},b_{L\setminus\{i,j,r_1,r_2\}}+1)f^{1}(b_i+b_j-1,b_i+b_j+1)
\end{align}

\begin{align}
&[t_{L}^{b_{L}+1}]F_{2g+2l-4}(R_2)=-\sum_{1\leq i<j\leq
l}\langle\tau_{b_i+b_j}\tau_{b_{L\setminus\{i,j\}}}\lambda_{g}
\rangle_{g}f^{0}(b_{L\setminus\{i,j\}},b_{L\setminus\{i,j\}}+1)f^{1}(b_i+b_j+1,b_i+b_j+2)\\\nonumber
&-\sum_{1\leq i<j\leq l}\sum_{r\in
L\setminus\{i,j\}}\langle\tau_{b_i+b_j-1}\tau_{b_r+1}\tau_{b_{L\setminus\{i,j,r\}}}\lambda_{g}
\rangle_{g}f^{0}(b_r+1,b_r+1)f^{0}(b_{L\setminus\{i,j,r\}},b_{L\setminus\{i,j,r\}}+1)\\\nonumber
&f^{1}(b_i+b_j+1,b_i+b_j+3)
\end{align}

\begin{align}
&[t_{L}^{b_{L}+1}]F_{2g+2l-4}(R_3)=\sum_{1\leq i<j\leq
l}\langle\tau_{b_i+b_j}\tau_{b_{L\setminus\{i,j\}}}\lambda_{g}
\rangle_{g}f^{0}(b_{L\setminus\{i,j\}},b_{L\setminus\{i,j\}}+1)f^{0}(b_i+b_j+1,b_i+b_j)\\\nonumber
&+\sum_{1\leq i<j\leq l}\sum_{r\in
L\setminus\{i,j\}}\langle\tau_{b_i+b_j-1}\tau_{b_r+1}\tau_{b_{L\setminus\{i,j,r\}}}\lambda_{g}
\rangle_{g}f^{0}(b_i+b_j,b_i+b_j)f^{0}(b_{L\setminus\{i,j,r\}},b_{L\setminus\{i,j,r\}}+1)\\\nonumber
&f^{0}(b_r+1,b_r+1)+\sum_{1\leq i<j\leq l}\sum_{r\in
L\setminus\{i,j\}}\langle\tau_{b_i+b_j-2}\tau_{b_r+2}\tau_{b_{L\setminus\{i,j,r\}}}\lambda_{g}
\rangle_{g}f^{0}(b_i+b_j-1,b_i+b_j)f^{0}(b_r+2,b_r+1)\\\nonumber
&f^{0}(b_{L\setminus\{i,j,r\}},b_{L\setminus\{i,j,r\}}+1)+\sum_{1\leq
i<j\leq l}\sum_{r_1\in L\setminus\{i,j\}}\sum_{r_2\in
L\setminus\{i,j,r_1\}}\langle\tau_{b_i+b_j-2}\tau_{b_{r_1}+1}\tau_{b_{r_2}+1}\tau_{b_{L\setminus\{i,j,r_1,r_2\}}}
\lambda_{g}\rangle_{g}\\\nonumber
&f^{0}(b_{r_1}+1,b_{r_1}+1)f^{0}(b_{r_2}+1,b_{r_2}+1)
f^{0}(b_{L\setminus\{i,j,r_1,r_2\}},b_{L\setminus\{i,j,r_1,r_2\}}+1)f^{0}(b_i+b_j-1,b_i+b_j)
\end{align}

\begin{align}
&[t_{L}^{b_{L}+1}](F_{2g+2l-4}(R_4)+F_{2g+2l-4}(R_5))=-\sum_{1\leq
i<j\leq
l}\langle\tau_{b_i+b_j}\tau_{b_{L\setminus\{i,j\}}}\lambda_{g}
\rangle_{g}(2g+l)\\\nonumber
&f^{0}(b_{L\setminus\{i,j\}},b_{L\setminus\{i,j\}}+1)f^{0}(b_i+b_j+1,b_i+b_j+1)-\sum_{1\leq
i<j\leq l}\sum_{r\in
L\setminus\{i,j\}}\langle\tau_{b_i+b_j-1}\tau_{b_r+1}\tau_{b_{L\setminus\{i,j,r\}}}\lambda_{g}
\rangle_{g}\\\nonumber
&(2g+l)f^{0}(b_r+1,b_r+1)f^{0}(b_{L\setminus\{i,j,r\}},b_{L\setminus\{i,j,r\}}+1)f^{0}(b_i+b_j,b_i+b_j+1)
\end{align}

\begin{align}
&[t_{L}^{b_{L}+1}]F_{2g+2l-4}(R_6)=\sum_{1\leq i<j\leq
l}\langle\tau_{b_i+b_j}\tau_{b_{L\setminus\{i,j\}}}\lambda_{g}
\rangle_{g}(2g+l-1)f^{0}(b_{L\setminus\{i,j\}},b_{L\setminus\{i,j\}}+1)\\\nonumber
&f^{0}(b_i+b_j+1,b_i+b_j+2)
\end{align}

\begin{align}
&[t_{L}^{b_{L}+1}]F_{2g+2l-4}(R_7)=\sum_{1\leq i<j\leq l}\sum_{r\in
L\setminus\{i,j\}}\langle\tau_{b_i+b_j+1}\tau_{b_r-1}\tau_{b_{L\setminus\{i,j,r\}}}\lambda_{g}
\rangle_{g}f^{1}(b_r-1,b_r+1)f^{0}(b_{L\setminus\{i,j,r\}},b_{L\setminus\{i,j,r\}}+1)\\\nonumber
&f^{0}(b_i+b_j+2,b_i+b_j+1)+\sum_{1\leq i<j\leq l}\sum_{r\in
L\setminus\{i,j\}}\sum_{s\in
L\setminus\{i,j,r\}}\langle\tau_{b_i+b_j-1}\tau_{b_r-1}\tau_{b_s+2}\tau_{b_{L\setminus\{i,j,r,s\}}}\lambda_{g}\rangle_{g}\\\nonumber
&f^{1}(b_r-1,b_r+1)f^{0}(b_s+2,b_s+1)f^{0}(b_{L\setminus\{i,j,r,s\}},b_{L\setminus\{i,j,r,s\}}+1)
f^{0}(b_i+b_j,b_i+b_j+1)\\\nonumber
&+\sum_{1\leq i<j\leq
l}\sum_{r\in L\setminus\{i,j\}}\sum_{s_1\in
L\setminus\{i,j,r\}}\sum_{s_2\in
L\setminus\{i,j,r,s_1\}}\langle\tau_{b_i+b_j-1}\tau_{b_r-1}\tau_{b_{s_1}+1}\tau_{b_{s_2}+1}
\tau_{b_{L\setminus\{i,j,r,s_1,s_2\}}}\lambda_{g}\rangle_{g}f^{1}(b_r-1,b_r+1)\\\nonumber
&f^{0}(b_{s_1}+1,b_{s_1}+1)f^{0}(b_{s_2}+1,b_{s_2}+1)f^{0}(b_{L\setminus\{i,j,r,s_1,s_2\}},b_{L\setminus\{i,j,r,s_1,s_2\}}
+1)f^{0}(b_i+b_j,b_i+b_j+1)\\\nonumber
&+\sum_{1\leq i<j\leq
l}\sum_{r\in
L\setminus\{i,j\}}\langle\tau_{b_i+b_j-1}\tau_{b_r+1}\tau_{b_{L\setminus\{i,j,r\}}}\lambda_{g}
\rangle_{g}f^{1}(b_r+1,b_r+1)f^{0}(b_{L\setminus\{i,j,r\}},b_{L\setminus\{i,j,r\}}+1)\\\nonumber
&f^{0}(b_i+b_j,b_i+b_j+1)+\sum_{1\leq i<j\leq l}\sum_{r\in
L\setminus\{i,j\}}\sum_{s\in
L\setminus\{i,j,r\}}\langle\tau_{b_i+b_j-1}\tau_{b_r}\tau_{b_s+1}\tau_{b_{L\setminus\{i,j,r,s\}}}\lambda_{g}\rangle_{g}f^{1}(b_r,b_r+1)\\\nonumber
&f^{0}(b_s+1,b_s+1)f^{0}(b_{L\setminus\{i,j,r,s\}},b_{L\setminus\{i,j,r,s\}}+1)
f^{0}(b_i+b_j,b_i+b_j+1)+\sum_{1\leq i<j\leq l}\sum_{r\in
L\setminus\{i,j\}}\langle\tau_{b_i+b_j}\tau_{b_r}\tau_{b_{L\setminus\{i,j,r\}}}\lambda_{g}
\rangle_{g}\\\nonumber
&f^{1}(b_r,b_r+1)f^{0}(b_{L\setminus\{i,j,r\}},b_{L\setminus\{i,j,r\}}+1)f^{0}(b_i+b_j+1,b_i+b_j+1)\\\nonumber
&+\sum_{1\leq i<j\leq l}\sum_{r\in L\setminus\{i,j\}}\sum_{s\in
L\setminus\{i,j,r\}}\langle\tau_{b_i+b_j}\tau_{b_r-1}\tau_{b_s+1}\tau_{b_{L\setminus\{i,j,r,s\}}}
\lambda_{g}\rangle_{g}f^{1}(b_r-1,b_r+1)f^{0}(b_s+1,b_s+1)\\\nonumber
&f^{0}(b_{L\setminus\{i,j,r,s\}},b_{L\setminus\{i,j,r,s\}}+1)
f^{0}(b_i+b_j+1,b_i+b_j+1)
\end{align}

\begin{align}
&[t_{L}^{b_{L}+1}]F_{2g+2l-4}(R_8)=-\sum_{1\leq i<j\leq l}\sum_{r\in
L\setminus\{i,j\}}\langle\tau_{b_i+b_j+1}\tau_{b_r-1}\tau_{b_{L\setminus\{i,j,r\}}}\lambda_{g}
\rangle_{g}f^{1}(b_r-1,b_r+1)\\\nonumber
&f^{0}(b_{L\setminus\{i,j,r\}},b_{L\setminus\{i,j,r\}}+1)f^{0}(b_i+b_j+2,b_i+b_j+2)-\sum_{1\leq
i<j\leq l}\sum_{r\in
L\setminus\{i,j\}}\langle\tau_{b_i+b_j}\tau_{b_r}\tau_{b_{L\setminus\{i,j,r\}}}\lambda_{g}
\rangle_{g}\\\nonumber
&f^{1}(b_r,b_r+1)f^{0}(b_{L\setminus\{i,j,r\}},b_{L\setminus\{i,j,r\}}+1)f^{0}(b_i+b_j+1,b_i+b_j+2)\\\nonumber
&-\sum_{1\leq i<j\leq l}\sum_{r\in L\setminus\{i,j\}}\sum_{s\in
L\setminus\{i,j,r\}}\langle\tau_{b_i+b_j}\tau_{b_r-1}\tau_{b_s+1}\tau_{b_{L\setminus\{i,j,r,s\}}}\lambda_{g}
\rangle_{g}f^{1}(b_r-1,b_r+1)f^{0}(b_s+1,b_s+1)\\\nonumber
&f^{0}(b_{L\setminus\{i,j,r,s\}},b_{L\setminus\{i,j,r,s\}}+1)f^{0}(b_i+b_j+1,b_i+b_j+2)
\end{align}

\begin{align}
&[t_{L}^{b_{L}+1}]F_{2g+2l-4}(R_9)=-\sum_{1\leq i<j\leq
l}\langle\tau_{b_i+b_j+1}\tau_{b_{L\setminus\{i,j\}}}\lambda_{g-1}
\rangle_{g}f^{0}(b_{L\setminus\{i,j\}},b_{L\setminus\{i,j\}}+1)
f^{0}(b_i+b_j+2,b_i+b_j+1)\\\nonumber &-\sum_{1\leq i<j\leq
l}\sum_{r\in
L\setminus\{i,j\}}\langle\tau_{b_i+b_j}\tau_{b_r+1}\tau_{b_{L\setminus\{i,j,r\}}}\lambda_{g-1}
\rangle_{g}f^{0}(b_r+1,b_r+1)f^{0}(b_{L\setminus\{i,j,r\}},b_{L\setminus\{i,j,r\}}+1)\\\nonumber
&f^{0}(b_i+b_j+1,b_i+b_j+1)-\sum_{1\leq i<j\leq l}\sum_{r\in
L\setminus\{i,j\}}\sum_{s\in
L\setminus\{i,j,r\}}\langle\tau_{b_i+b_j-1}\tau_{b_r+1}\tau_{b_s+1}\tau_{b_{L\setminus\{i,j,r,s\}}}\lambda_{g-1}
\rangle_{g}\\\nonumber
&f^{0}(b_r+1,b_r+1)f^{0}(b_s+1,b_s+1)f^{0}(b_{L\setminus\{i,j,r,s\}},b_{L\setminus\{i,j,r,s\}}+1)f^{0}(b_i+b_j,b_i+b_j+1)\\\nonumber
&-\sum_{1\leq i<j\leq l}\sum_{r\in
L\setminus\{i,j\}}\langle\tau_{b_i+b_j-1}\tau_{b_r+2}\tau_{b_{L\setminus\{i,j,r\}}}\lambda_{g-1}
\rangle_{g}f^{0}(b_r+2,b_r+1)f^{0}(b_{L\setminus\{i,j,r\}},b_{L\setminus\{i,j,r\}}+1)\\\nonumber
&f^{0}(b_i+b_j,b_i+b_j+1)
\end{align}

\begin{align}
&[t_{L}^{b_{L}+1}]F_{2g+2l-4}(R_{10})=\sum_{1\leq i<j\leq
l}\langle\tau_{b_i+b_j+1}\tau_{b_{L\setminus\{i,j\}}}\lambda_{g-1}
\rangle_{g}f^{0}(b_{L\setminus\{i,j\}},b_{L\setminus\{i,j\}}+1)f^{0}(b_i+b_j+2,b_i+b_j+2)\\\nonumber
&+\sum_{1\leq i<j\leq l}\sum_{r\in
L\setminus\{i,j\}}\langle\tau_{b_i+b_j}\tau_{b_r+1}\tau_{b_{L\setminus\{i,j,r\}}}\lambda_{g-1}
\rangle_{g}f^{0}(b_r+1,b_r+1)f^{0}(b_{L\setminus\{i,j,r\}},b_{L\setminus\{i,j,r\}}+1)\\\nonumber
&f^{0}(b_i+b_j+1,b_i+b_j+2)
\end{align}

\begin{align}
&[t_{L}^{b_{L}+1}]F_{2g+2l-4}(R_{11})=\sum_{1\leq i<j\leq
l}\langle\tau_{b_i+b_j}\tau_{b_{L\setminus\{i,j\}}}\lambda_{g}
\rangle_{g}gf^{0}(b_{L\setminus\{i,j\}},b_{L\setminus\{i,j\}}+1)f^{0}(b_i+b_j+1,b_i+b_j+1)\\\nonumber
&+\sum_{1\leq i<j\leq l}\sum_{r\in
L\setminus\{i,j\}}\langle\tau_{b_i+b_j-1}\tau_{b_r+1}\tau_{b_{L\setminus\{i,j,r\}}}\lambda_{g}
\rangle_{g}gf^{0}(b_r+1,b_r+1)f^{0}(b_{L\setminus\{i,j,r\}},b_{L\setminus\{i,j,r\}}+1)\\\nonumber
&f^{0}(b_i+b_j,b_i+b_j+1)
\end{align}

\begin{align}
&[t_{L}^{b_{L}+1}]F_{2g+2l-4}(R_{12})=-\sum_{1\leq i<j\leq
l}\langle\tau_{b_i+b_j}\tau_{b_{L\setminus\{i,j\}}}\lambda_{g}
\rangle_{g}gf^{0}(b_{L\setminus\{i,j\}},b_{L\setminus\{i,j\}}+1)f^{0}(b_i+b_j+1,b_i+b_j+2)
\end{align}

\begin{align}
&[t_{L}^{b_{L}+1}]F_{2g+2l-4}(R_{13})=-\sum_{1\leq i<j\leq
l}\langle\tau_{b_i+b_j-1}\tau_{b_{L\setminus\{i,j\}}}\lambda_{g}\lambda_{1}
\rangle_{g}f^{0}(b_{L\setminus\{i,j\}},b_{L\setminus\{i,j\}}+1)f^{0}(b_i+b_j,b_i+b_j+1)
\end{align}

\begin{align}
&[t_L^{b_L+1}]F_{2g+2l-4}(R_{16})=\frac{1}{2}\sum_{j=1}^{l}\sum_{a_1+a_2=b_j-3}\sum_{\substack{g_1+g_2=g\\
\mathcal{I}\cup\mathcal{J}=L\setminus\{j\}}}^{stable}\langle\tau_{a_1}\tau_{b_{\mathcal{I}}}\lambda_{g_1}
\rangle_{g_1}\langle\tau_{a_2}\tau_{b_{\mathcal{J}}}\lambda_{g_2}\rangle_{g_2}\\\nonumber
&f^{0}(a_1+1,a_1+2)f^{0}(a_2+1,a_2+2)f^{0}(b_{L\setminus\{j\}},b_{L\setminus\{j\}}+1)
\end{align}

Therefore, from the equation $(97)$, we get the identity
\begin{align}
&(120)+(121)+(122)+(123)+(124)+(125)=(126)+(127)\\\nonumber
&+(128)+(129)+(130)+(131)+(132)+(133)+(134)+(135)+136)+(137)+(138)
\end{align}

We note that, three types of the Hodge integrals appear in identity
$(139)$, $\langle\tau_{b_L}\lambda_{g}\rangle_{g}$,
$\langle\tau_{b_L}\lambda_{g-1}\rangle_{g}$ and
$\langle\tau_{b_L}\lambda_{g}\lambda_{1}\rangle_{g}$.

By now, we have known the closed formula for $\lambda_{g}$-integral
\cite{FabP} and a recursion formula for $\lambda_{g-1}$-integral
\cite{Zhu}.
\begin{equation}
\int_{\overline{\sM}_{g,l}}\psi_1^{d_1}\dots\psi_l^{d_l}\lambda_g=\binom{2g-3+l}{b_{1},..,b_{l}}c_{g},
\end{equation}
where $\sum_{i=1}^{l}b_{i}=2g-3+l, b_{1},..,b_{l}\geq 0$ and $c_{g}$
is a constant that depends only on $g$.

\begin{align}
\langle\tau_{b_{1}}\cdots\tau_{b_{l}}\lambda_{g-1}\rangle_{g}=\frac{1}{l}\sum_{1\leq
i<j\leq l}\langle\tau_{b_{i}+b_{j}-1}\prod_{k\neq
i,j}\tau_{b_{k}}\lambda_{g-1}\rangle_{g}\frac{(b_{i}+b_{j})!}{b_{i}!b_{j}!}+C_{g,l}(b_{1},..,b_{l})
\end{align}
where $C_{g,n}(b_{1},..,b_{l})$ is a constant related to
$b_{1},..,b_{l}$ and $g,l$.

On the other hand, the constants $f^{0}(b,k)$ and $f^{1}(b,k)$ are
available by their definition and the recursions $(52)$, $(53)$. For
example, $f^{0}(b,b+1)=b!$,
$f^{1}(b,b+2)=(b+1)!\sum_{k=2}^{b+1}\frac{1}{k}$. Thus, solving the
equation $(139)$, we will get a formula for the computation of Hodge
integral $\langle\tau_{b_L}\lambda_{g}\lambda_{1}\rangle_{g}$.

We observe that only the terms $(125)$ and $(137)$ contain the Hodge
integral of type
$\langle\tau_{b_L}\lambda_{g}\lambda_{1}\rangle_{g}$. Moving and
combining the corresponding terms, we have
\begin{align}
&\langle\tau_{b_{L}}\lambda_{g}\lambda_{1}\rangle_{g}
=\frac{1}{l}\sum_{1\leq i<j\leq
l}\langle\tau_{b_{i}+b_{j}-1}\tau_{b_{L\setminus\{i,j\}}}\lambda_{g}\lambda_{1}\rangle_{g}
\frac{(b_{i}+b_{j})!}{b_{i}!b_{j}!}+C(g,l,b_1,..,b_l)
\end{align}
Where
\begin{align}
&C(g,l,b_1,..,b_l)=-\frac{1}{l\prod_{i=1}^{l}b_{i}!}\left((126)+(127)+(128)+(129)+(130)+(131)+(132)\right.\\\nonumber
&\left.+(133)+(134)+(135)+(136)+(138)-(120)-(121)-(122)-(123)-(124)
\right)
\end{align}

Formula $(143)$ contains many terms, we hope to make a computer
program to calculate $C(g,l,b_1,..,b_l)$.

$$ \ \ \ \ $$

\end{document}